\documentclass[11pt]{amsart}
\usepackage{amsmath,amssymb,amsthm,amsfonts,setspace,hyperref,dsfont,yhmath}

\newcommand{\NN}{\mathbb{N}}
\newcommand{\RR}{\mathbb{R}}

\newcommand{\CC}{\mathbb{C}}

\newcommand{\QQ}{\mathbb{Q}}

\newcommand{\TT}{\mathbb{T}}
\newcommand{\ZZ}{\mathbb{Z}}

\newcommand{\norm}[1]{\lVert#1\rVert}
\newcommand{\abs}[1]{\lvert#1\rvert}

\newtheorem{theorem}{Theorem}[section]
\newtheorem{corollary}[theorem]{Corollary}

\newtheorem{lemma}[theorem]{Lemma}
\newtheorem{proposition}[theorem]{Proposition}

\theoremstyle{definition}
\newtheorem{definition}[theorem]{Definition}
\newtheorem{remark}[theorem]{Remark}

\numberwithin{equation}{section}

\begin{document}

\title{Local rigidity of higher rank non-abelian action on tours}
\thanks{{\em 2010 Mathematics Subject Classification.} Primary 22E46, 22E50, Secondary 22D10, 22E30, 22E35.}
\footnotetext{The author is supported by NSF grants DMS-1346876.}
\author[]{ Zhenqi Jenny Wang }
\address{Department of Mathematics\\
        Michigan State University\\
        East Lansing, MI 48824,   USA}
\email{wangzq@math.msu.edu
}

\begin{abstract}
In this paper, we show local smooth  rigidity for higher rank ergodic nilpotent action by toral automorphisms and prove the existence of such action on any torus $\TT^N$ for any even $N\geq 6$. We also give examples of smooth rigidity of actions having rank-one factors. The method is a generalization of the KAM (Kolmogorov-Arnold-Moser) iterative scheme.

\end{abstract}
\maketitle
\section{Introduction and main result}
Let $\mathcal{H}$ be a finitely generated group and $\alpha:\mathcal{H}\rightarrow GL(N,\ZZ)$ be a homomorphism, where $GL(N,\ZZ)$ is the group of integer $N\times N$ matrices with determinant $\pm 1$. Then $\alpha$ induces a natural action on $\TT^N$ by automorphism. We say that action $\alpha$ is a \emph{higher rank ergodic action} if $\alpha(\mathcal{H})$ contains two ergodic elements $A,\,B$ such that $A^{k_1}B^{k_2}$ is ergodic if $k=(k_1,k_2)\in\ZZ^2\backslash 0$.
\begin{definition} The action $\alpha$ of  $\mathcal{H}$ on $\TT^N$ is $C^{k,r,\ell}$ {\em locally
rigid} if any $C^k$ perturbation $\tilde{\alpha}$ which is
sufficiently $C^r$ close to $\alpha$ on a compact generating
set is $C^\ell$ conjugate to $\alpha$.
\end{definition}
In contrast to  the structural stability ($C^0$ rigidity) of diffeomorphisms and flows in hyperbolic dynamics,
where differentiable rigidity is mostly like impossible, the presence of a large group action frequently allows one to improve
the regularity of the conjugacy.  The study of local differentiable rigidity of group actions has had two primary progresses, one from higher rank abelian action, see
\cite{Damjanovic4}, \cite{Damjanovic3}, \cite{fisher}, \cite{Spatzier1}, \cite{Spatzier2}, \cite{Spatzier}; the other from lattice in a semisimple Lie group, see \cite{fisher1}.

The most general condition in the setting of $\ZZ^k\times\RR^l$ $k+l\geq2$, actions, which
leads to various rigidity phenomena (cocycle rigidity, local differentiable rigidity,
measure rigidity, etc.), is the following:

\noindent $(\mathfrak{R})$ the group $\ZZ^k\times\RR^l$ contains a subgroup $S$ isomorphic to $\ZZ^2$ such that every element other than identity
acts ergodically with respect to the standard invariant measure obtained
from Haar measure.

After noticing this condition, one can pass to the $\ZZ^2$ sub-action to establish the smooth conjugacy and then show that the conjugacy obtained also conjugates the other elements in $\ZZ^k\times\RR^l$.

A lattice in a connected semisimple Lie group $G$ of non-compact type is ``large" in the following sense: its Zariski closure is $G$. The rigidity theorem of lattice,
such as  Margulis' super-rigidity theorem and Zimmer's cocycle super-rigidity theorem play important roles in reduction of lattice actions.

If $\mathcal{H}$ is nilpotent, as we showed in Proposition \ref{po:1} all elements in $\alpha([\mathcal{H},\mathcal{H}])$ are not ergodic, where $[\mathcal{H},\mathcal{H}]$ is the commutator group  of $\mathcal{H}$, which means we can't pass to a $\ZZ^2$ ergodic action; and since any representation of an semisimple Lie group preserves semisimple and unipotent elements respectively, it is impossible to extend the action $\alpha$ to $G$ even if $\mathcal{H}$ sits inside a semisimple group $G$.

In this paper we prove local differentiable rigidity for higher rank ergodic nilpotent action by toral automorphisms. We also show the existence of genuine partially hyperbolic nilpotent action. Our method is  based on KAM-type iteration scheme
that was first introduced in \cite{Damjanovic4} and was later developed in \cite{Damjanovic3}. In their proofs the
commutativity of the action is essential. The natural difficulty in
non-abelian type arguments is related to the complexity of the cocycle equations related to commutator relations between non-abelian generators. For example, in \cite{wang} we extended the method to treat the nipotent action of length $2$, that is, the Heisenberg group action. However, even for the most simple non-abelian case, the calculation is complex. To prove the theorem, we make sufficient reduction and establish new orbit increasing relations between two ergodic generators.
\begin{definition} An action $\alpha'$ of $\mathcal{H}$ on $\TT^{N'}$ is an algebraic factor of an action $\alpha$ of $\mathcal{H}$ on $\TT^{N}$ if there exists an
epimorphism $h:\TT^N\rightarrow\TT^{N'}$ such that $h\circ\alpha=\alpha'\circ h$.

An action $\alpha'$ is a rank one factor if it is an algebraic factor and if $\alpha'(\mathcal{H})$
contains a cyclic subgroup of finite index.
\end{definition}
Condition $(\mathfrak{R})$ is always viewed as a paradigm for differential rigidity phenomena. An ergodic action $\alpha$ by toral automorphisms has no nontrivial rank one factors if and only if it satisfies condition $(\mathfrak{R})$ (see for example \cite{Starkov}). All the examples given so far are based on this condition. In this paper we obtain a class of examples where the above condition fails but enjoys differential rigidity property. The basic idea is for dual orbits with large projections in the subspace admitting rank-one factors, its increasing speed for well chosen non-ergodic (or unipotent) elements is fast enough to obtain tame estimates for the size of obstructions; and we have enough such non-ergodic (or unipotent) elements to cover all integer vectors inside the subspace. The polynomial increasing speed for unipotent elements play a crucial role in the proof.

\subsection{Statement of the main result}
Let $\mathcal{H}$ be a finitely generated group and $\alpha:\mathcal{H}\times\TT^N\rightarrow\TT^N$ is given by an embedding
$\rho_\alpha:\ZZ^N\rightarrow GL(N,\ZZ)$ so that
\begin{align*}
  \alpha(g,x)=\rho_\alpha(g)(x)
\end{align*}
for any $g\in\mathcal{H}$ and any $x\in\TT^N$. Let $\alpha:\mathcal{H}\times\TT^N\rightarrow\TT^N$ be an action of $\mathcal{H}$ by automorphisms of the $N$-dimensional torus. Throughout the paper, we will write simply $\alpha(g)$ for $\bar{g}$ if needed.

In next theorem we assume $\mathcal{H}$ is not abelian, otherwise it is Theorem $1$ in \cite{Damjanovic4}.
\begin{theorem}\label{th:1}
 If $\mathcal{H}$ is nilpotent (not abelian) and $\alpha$ is a higher rank ergodic action, then there exists a constant $l(\alpha,N)$ such that $\alpha$ is $C^{\infty,l,\infty}$ locally rigid.
\end{theorem}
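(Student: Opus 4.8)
The plan is to run a KAM-type iteration on the perturbation $\tilde\alpha$, following the scheme of \cite{Damjanovic4} and \cite{Damjanovic3}, but adapting it to the non-abelian (nilpotent) setting. First I would set up the linearized conjugacy equation: writing $\tilde\alpha(g) = \rho_\alpha(g) + \text{(error)}$ and seeking a conjugacy $H = \mathrm{Id} + h$, one is led at each step to an approximate cohomological equation of the form $h\circ\rho_\alpha(g) - \rho_\alpha(g)\circ h = -(\text{error at }g)$ for the generators $g$ of $\mathcal{H}$. The crucial structural input is a \emph{linear algebra reduction}: use the two ergodic generators $A,B$ guaranteed by the higher rank ergodic hypothesis. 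Since $A^{k_1}B^{k_2}$ is ergodic for all $(k_1,k_2)\in\ZZ^2\setminus 0$, the dual action of the abelian group $\langle A,B\rangle$ on $\ZZ^N\setminus 0$ has no finite orbits, so every dual orbit contains integer vectors that are driven to be large by \emph{some} element of the action. This is exactly the mechanism that produces the small-divisor control: for a given Fourier mode $n\in\ZZ^N$, one picks the group element that expands $n$ maximally and solves the twisted coboundary equation there with tame estimates, losing only a fixed number of derivatives.

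The key steps, in order, would be: (1) \emph{Reduction of the cocycle/commutator equations.} Because $\mathcal{H}$ is non-abelian, the generators satisfy commutator relations, and the approximate solutions $h_g$ obtained independently for each generator must be shown to be \emph{compatible}, i.e. to glue into a single conjugacy. This is where the commutator relations between non-abelian generators enter, and as the author notes, "the complexity of the cocycle equations related to commutator relations" is the source of difficulty. I would isolate the abelian subaction generated by $A,B$, solve the conjugacy problem there first using the classical higher-rank abelian machinery, and then propagate the conjugacy to the remaining generators using the nilpotent group relations — exploiting that by Proposition \ref{po:1} the commutator subgroup acts non-ergodically, so those elements must be handled by the polynomial (unipotent) growth estimates rather than exponential ones. (2) \emph{Tame estimates and loss of regularity.} Establish that solving the linearized equation costs a fixed finite number $\sigma$ of derivatives, uniformly over the iteration; the constant $l(\alpha,N)$ in the statement comes from bookkeeping this loss together with the Sobolev/interpolation inequalities. (3) \emph{The iteration and convergence.} Set up the Newton-type step $H_{n+1} = H_n\circ(\mathrm{Id}+h_n)$ with a smoothing operator (truncation of Fourier modes at an exponentially growing scale), show the quadratic decay of the error in a fixed norm modulo the derivative loss, and conclude convergence in $C^\infty$ by a standard Nash–Moser argument. (4) \emph{Extending the conjugacy to all of $\mathcal{H}$.} Once the generators are conjugated, the conjugacy automatically intertwines the full action.

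The main obstacle I expect is step (1): in the abelian case one has the Damjanović–Katok identity expressing coboundaries in terms of the action of commuting generators, which makes the "glueing" of the per-generator solutions essentially free; in the nilpotent case the analogous identities involve nested commutators, and one must control the size of the obstruction to compatibility. The decisive new ingredient — flagged in the introduction as "new orbit increasing relations between two ergodic generators" — must be a quantitative statement that for every dual orbit, the maximal expansion achieved by words in $A,B$ of bounded length grows fast enough (exponentially in the "size" of the mode) to absorb the extra commutator terms, so that the obstructions still satisfy tame estimates. Making this orbit-growth estimate uniform, and checking that the unipotent elements in $\alpha([\mathcal{H},\mathcal{H}])$ contribute only polynomially (hence harmlessly) to the error, is the technical heart of the proof; everything else is a careful but standard adaptation of the KAM scheme.
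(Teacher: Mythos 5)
Your overall KAM scaffolding (linearize, solve a twisted coboundary equation with tame loss of derivatives, project the perturbation onto the twisted cocycle space up to an error quadratic in $\mathcal{L}$, iterate) is the right frame, and matches the paper. But step (1) of your plan rests on a false premise that breaks the argument.

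You propose to ``isolate the abelian subaction generated by $A,B$'' and run the classical higher-rank abelian machinery on it. In the non-abelian case covered by Theorem~\ref{th:1} the subgroup $\langle A,B\rangle\subset GL(N,\ZZ)$ is \emph{not} abelian. The higher-rank hypothesis only says that each ``naive'' product $A^{k_1}B^{k_2}$ is ergodic; it does not say $A$ and $B$ commute, and in fact their commutator $[A,B]\in\alpha([\mathcal{H},\mathcal{H}])$ is necessarily non-ergodic by Proposition~\ref{po:1} and Remark~\ref{re:3}. This is exactly the obstruction the introduction flags: one cannot pass to a $\ZZ^2$ ergodic sub-action, so the Damjanovi\'c--Katok machinery for commuting matrices is not directly available. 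Consequently your subsequent claim that ``the dual action of the abelian group $\langle A,B\rangle$ on $\ZZ^N\setminus 0$ has no finite orbits'' is ill-posed (the group is not abelian and contains non-ergodic elements), and the ``solve abelian part first, then propagate by nilpotent relations'' order cannot be carried out.

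What the paper actually does is the reverse of your reduction, and the commutator corrections are threaded through the analysis from the start rather than handled afterwards. Lemma~\ref{le:2} first shows that producing a smooth conjugacy for a \emph{single} ergodic generator already conjugates the full nilpotent action (induction on the lower central series plus the $C^1$-rigidity Lemma~\ref{le:3}). To conjugate that one generator $\mathcal{A}=A^{l}$, one must bound the obstructions of the twisted coboundary equation, and here the non-commutativity is unavoidable: words $\prod_j(\mathcal{A}^{\delta_j}\mathcal{B}^{\pm 1})$ equal $\mathcal{A}^{\sum\delta_j}\mathcal{B}^{\pm n}$ only up to an element $e_n\in\alpha([\mathcal{H},\mathcal{H}])$ whose norm grows polynomially (Proposition~\ref{po:1}\eqref{for:115}). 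The paper therefore introduces the whole chain of iterated commutators $d_j=D_j(g_1,d_0)$, works with the coupled system \eqref{for:11}, and in Proposition~\ref{po:2} and Lemma~\ref{le:4} compares obstructions along ``twisted'' dual orbits $\mathcal{A}^{j}\overline{d_i}=\overline{d_i}(\mathcal{A}\overline{d_{i+1}})^{j}$, using Lemma~\ref{le:9} and Lemma~\ref{cor:3} (after first enlarging $A$ to $A^l$ so the minimal-point dynamics of the dual orbit is controlled). Your diagnosis that the ``new orbit increasing relations'' are the technical heart is correct, but they are used to absorb precisely this non-abelian correction term, not to justify a two-stage abelian-then-nilpotent reduction which, as written, starts from a hypothesis the setting denies.
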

 For any unipotent element  $U\in GL(N,\ZZ)$  let $p_1(U)=\{v\in\RR^N:Uv=v\}$. Suppose $A_1$ is an ergodic element in $GL(N,\ZZ)$ and $A_i$, $1\leq i\leq n$ are unipotent element  $U\in GL(N,\ZZ)$ satisfying $A_1A_i=A_iA_1$ and $(A_i-I_N)^2=0$, $2\leq i\leq n$. Also suppose $\bigcap_{i=2}^np_1(A_i)=\{0\}$. Let $\mathcal{H}$ be a group generated by $A_i$, $1\leq i\leq n$.
\begin{theorem}\label{th:2}

For the action $\alpha$ described above, there exists a constant $l(\alpha,N)$ such that $\alpha$ is $C^{\infty,l,\infty}$ locally rigid.
\end{theorem}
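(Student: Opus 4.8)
The plan is to run a Kolmogorov--Arnold--Moser iteration adapted to the non-abelian group $\mathcal{H}=\langle A_1,\dots,A_n\rangle$, in the spirit of \cite{Damjanovic4,Damjanovic3} and of the Heisenberg (length-two) case of \cite{wang}. Write a $C^\infty$ perturbation that is $C^l$-close to $\alpha$ as $\tilde\alpha(A_i)=A_i+S_i$ with $S_i\colon\TT^N\to\RR^N$ small, and look for a conjugacy $h=I+u$, $u$ small, with $h\circ\tilde\alpha(A_i)=A_i\circ h$ for every $i$. Writing this identity out and discarding the term $u\circ\tilde\alpha(A_i)-u\circ A_i$, which is quadratically small, the problem reduces, at each step of the iteration, to solving the twisted coboundary system
\begin{align}\label{eq:cohom}
  A_i\,u-u\circ A_i=f_i,\qquad 1\le i\le n,
\end{align}
where $f_i$ is $C^\infty$-close to $S_i$ and, because $\tilde\alpha$ is an action, the tuple $(f_1,\dots,f_n)$ satisfies the cocycle identities forced by the relations of $\mathcal{H}$ up to a quadratically small defect; in particular $A_1A_i=A_iA_1$ yields $A_i\,f_1-f_1\circ A_i=A_1\,f_i-f_i\circ A_1$ modulo a quadratically small term.

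The core of the proof is to solve \eqref{eq:cohom} with a \emph{tame} loss of derivatives. Since condition $(\mathfrak{R})$ may fail here --- $\alpha$ can carry a nontrivial rank-one algebraic factor --- one cannot pass to an ergodic $\ZZ^2$-sub-action as in \cite{Damjanovic4}; instead I would start from the cohomological equation of the single ergodic generator $A_1$. In Fourier coordinates $u=\sum_k\widehat u(k)e^{2\pi i\langle k,\cdot\rangle}$ this equation is solved along the dual $A_1$-orbits, which all escape to infinity because $A_1$ is ergodic, so the only genuine obstructions to a smooth solution are linear functionals $b(\mathcal{O})\in\RR^N$ of $f_1$ of periodic-orbit type --- and these are, a priori, only of the size of the perturbation, not quadratically small. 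The new ingredient is that the unipotent generators annihilate them: since each $b(\mathcal{O})$ kills $A_1$-coboundaries, evaluating it on the cocycle identity above gives $A_i\,b(\mathcal{O})-b(A_i\mathcal{O})=O(\norm{S}^2)$ for $2\le i\le n$, and because $(A_i-I_N)^2=0$ one has $A_i^{\,m}=I_N+m(A_i-I_N)$, so iterating this relation transports $b$ from one periodic orbit to another with only a \emph{polynomially} growing accumulated error. The hypothesis $\bigcap_{i=2}^n p_1(A_i)=\{0\}$ then guarantees that the $A_i$ move the periodic orbits around enough to connect every orbit to the trivial one in polynomially many steps, so that all obstructions are $O(\norm{S}^2)$; the Fourier telescoping over $A_1$ now produces an honest function rather than a distribution, and one obtains a single $u$ solving all $n$ equations of \eqref{eq:cohom} up to a quadratically small error, with $\norm{u}_{C^r}\le C\,\norm{(f_i)}_{C^{r+\sigma}}$ for a fixed $\sigma=\sigma(\alpha,N)$.

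With this tame linearized estimate the iteration is routine: at the $m$-th step truncate the current error to frequencies $\le R_m$, solve the truncated \eqref{eq:cohom}, set $h_{m+1}=(I+u_m)\circ h_m$, and control the new error by a quadratic term plus a rapidly decaying high-frequency tail; with $R_m=R_0^{(3/2)^m}$ the errors decay faster than any geometric rate in every $C^r$ norm, so $h_m\to h$ in $C^\infty$ and $h$ conjugates $\tilde\alpha$ to $\alpha$ on the generators, hence on all of $\mathcal{H}$. The constant $l=l(\alpha,N)$ is the total regularity consumed: the loss attached to $A_1$, the polynomial loss from the slowest unipotent escape exponent, and the fixed KAM overhead.

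The main obstacle is the tame control of this obstruction transport in the non-abelian setting. One must organise the cocycle identities --- which, because of the commutator relations among the $A_i$, are substantially more intricate than in the abelian or Heisenberg cases --- so that connecting a periodic orbit of size $\sim\norm{k}$ to the trivial one takes only polynomially many steps in $\norm{k}$, and so that the accumulated quadratically small errors, amplified by the polynomial norms $\norm{A_i^{\,m}}$, stay tame. It is precisely here that the polynomial (not merely eventual) escape of the unipotent elements and the transversality $\bigcap_{i=2}^n p_1(A_i)=\{0\}$ enter in an essential way, and where the new orbit-growth relations between the ergodic generator and the unipotent ones are required.
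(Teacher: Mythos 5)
Your proposal follows the same strategy as the paper's proof: start from the coboundary equation for the single ergodic generator $A_1$, identify the obstructions as sums of Fourier coefficients along dual $A_1$-orbits, use the $A_1$--$A_i$ cocycle identity to relate the obstruction at a dual orbit $\mathcal O$ to that at $A_i\mathcal O$ up to a defect of size $\norm{\mathcal L}$, exploit the polynomial growth $A_i^m=I+m(A_i-I)$ together with the transversality $\bigcap_{i\ge2}p_1(A_i)=\{0\}$ (which guarantees for each minimal dual-orbit representative $v$ an index $i_0$ with $\norm{P_{i_0}v}\ge C\norm v$, hence a polynomial escape rate $\norm{A_1^{k_1}A_{i_0}^{k_2}v}\gtrsim\rho^{|k_1|}|k_2|^{1/2}\norm v^{-n_1}$ as in Lemma \ref{le:10}), and conclude that the obstruction series converges to something of order $\norm{\mathcal L}$, after which the KAM iteration runs as in \cite{Damjanovic4}. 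This is precisely what Proposition \ref{po:4} and Corollary \ref{cor:4} establish; your only imprecisions are terminological (the obstructions live on \emph{dual} $A_1$-orbits in $\ZZ^N$, not periodic orbits, and the transport pushes each orbit polynomially fast to infinity rather than "connecting it to a trivial one in finitely many steps"), but the mechanism and the role of each hypothesis are the same.
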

\begin{remark}
\end{remark}

Let us call an action of $\mathcal{H}$ by automorphisms of a torus
genuinely partially hyperbolic if it is ergodic with respect to Lebesgue measure
but no element of the action is hyperbolic (Anosov). It is easy to see that this is
equivalent to simultaneous existence of
\begin{enumerate}
  \item an element of the action none of whose eigenvalues is a root of unity and

  \smallskip
  \item an invariant linear foliation on which there is no exponential expansion/contraction
for any element of the action.
\end{enumerate}
As before, such an action is higher rank if and only if it contains two elements $A,\,B$ such that $A^{k_1}B^{k_2}$ is ergodic if $k=(k_1,k_2)\in\ZZ^2\backslash 0$.
\begin{theorem}\label{th:3}
Genuinely partially hyperbolic higher rank nilpotent $\mathcal{H}$ actions exist: on
any torus of even dimension $N\geq 6$ there are irreducible examples while on any
torus of odd dimension $N\geq9$ there are only reducible examples. There are no
examples on tori of dimension $N\leq5$ and $N=7$.
\end{theorem}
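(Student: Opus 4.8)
The statement packages a construction half with a non-existence half, and the construction half splits into the irreducible (even $N\ge6$) and reducible (odd $N\ge9$) regimes; I would organise the proof around one structural reduction feeding all three parts.

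\textbf{Reduction.} Fix a genuinely partially hyperbolic higher rank nilpotent action $\alpha$ and an ergodic element $A$ (which exists by the higher rank hypothesis). Since $A$ is ergodic, $\rho_\alpha(A)$ has no root-of-unity eigenvalue, so by Kronecker's theorem every eigenvalue of modulus $1$ has a Galois conjugate of modulus $\ne1$; because non-real eigenvalues occur in conjugate pairs and each irreducible factor of the characteristic polynomial has constant term $\pm1$, the minimal polynomial of a neutral eigenvalue must be ``of Salem type'' of degree $\ge4$ (at least two roots on, at least two off, the unit circle). Thus partial hyperbolicity already forces a degree-$\ge4$ irreducible factor of $\rho_\alpha(A)$. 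On the other hand, since $\mathcal{H}$ is nilpotent and non-abelian, the last nontrivial term of its lower central series supplies a central $Z\in[\mathcal{H},\mathcal{H}]$ commuting with $A$ which, by Proposition~\ref{po:1}, is not ergodic; inspecting the commutant of $\rho_\alpha(A)$ then shows that $Z$ can be nontrivial only if either that commutant carries an order-two automorphism compatible with $A$ (which for number-field commutants forces even $\QQ$-dimension) or the action decomposes into blocks permuted by $\mathcal{H}$, each block again carrying Salem
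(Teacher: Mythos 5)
The paper's proof of this theorem is, in fact, not present: the section titled ``Proof of'' immediately above the bibliography is empty, and neither Theorem~\ref{th:3} nor its construction is addressed anywhere else in the manuscript. (The statement is a nilpotent restatement of Theorem~2 of \cite{Damjanovic4}, so presumably the intended proof was to be adapted from there.) Consequently a comparison with ``the paper's proof'' is not possible, and I can only assess your proposal on its own terms.

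Your proposal is genuinely incomplete: it stops mid-sentence, and what is present constitutes only the very first necessary condition, not a proof. The content you do state is correct and useful: an ergodic but non-Anosov element $A$ of $GL(N,\ZZ)$ has a unit-modulus eigenvalue $\mu$ which is not a root of unity, so by Kronecker its minimal polynomial is not cyclotomic; since $|\mu|=1$ forces $\bar\mu=\mu^{-1}$ to be a conjugate, that minimal polynomial is reciprocal, hence of \emph{even} degree, with at least two roots on and two roots off the unit circle, hence of degree $\geq 4$. This single observation eliminates $N\leq 3$ (no room) and $N=5$ (a degree-$4$ Salem factor plus a degree-$1$ factor gives a root-of-unity eigenvalue, contradicting ergodicity; no irreducible degree-$5$ Salem-type factor exists because the degree must be even). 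But it is \emph{not} sufficient for the rest of the statement, and that is where the proposal has a real gap.

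Concretely, the degree constraint does nothing to rule out $N=4$ (a degree-$4$ Salem polynomial exists and yields an irreducible ergodic partially hyperbolic $A$) or $N=7=4+3$. Eliminating those cases requires a second, quantitative ingredient you do not invoke: Dirichlet's unit theorem for the commutant $\QQ(\lambda)$. For a degree-$4$ Salem field, $(r_1,r_2)=(2,1)$ gives unit rank $2$, but the sublattice of units whose unique complex place has modulus $1$ (i.e.\ units preserving a nontrivial neutral foliation without expansion/contraction) has rank only $1$; hence no $\ZZ^2$ higher rank subaction can keep a common neutral foliation, and genuine partial hyperbolicity fails on $\TT^4$. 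The same rank-$1$ obstruction in the degree-$4$ block kills the $4+3$ decomposition of $N=7$, because making the degree-$4$ component rank one forces some $A^{k_1}B^{k_2}$ to be non-ergodic. Conversely, for a degree-$6$ Salem field $(r_1,r_2)=(2,2)$ gives unit rank $3$, so the sublattice fixing the modulus of one complex place has rank $2$ and supplies genuine examples; this is what yields the irreducible constructions for even $N\geq 6$ and, via splitting off a hyperbolic block, the reducible ones for odd $N\geq 9$. Your proposal also never produces an actual nilpotent $\mathcal{H}$ (you gesture at a central $Z\in[\mathcal{H},\mathcal{H}]$ and ``an order-two automorphism of the commutant,'' but this is vague and never connected to the dimension count). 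Without the unit-rank analysis and an explicit construction, neither the existence nor the nonexistence halves of the theorem are established.
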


\section{Setting of the problem and the KAM scheme} \label{sec:3}Before proceeding to specifics we will show how the general KAM scheme described in \cite[Section 3.3]{Damjanovic4} and \cite[Section 1.1]{Damjanovic3}
is adapted to the $\mathcal{H}$ action $\alpha$, which also clarifies the proof line of the paper.

\smallskip
\noindent \textbf{Step 1}. \emph{Setting up the linearized equation}

\smallskip
Let $\widetilde{\alpha}$ be a small perturbation of $\alpha$. To prove the existence
of a $C^\infty$ map $H$ such that $\widetilde{\alpha}\circ H=H\circ \alpha$, we need to solve the nonlinear conjugacy problem
\begin{align*}
  \alpha(g)\circ\Omega-\Omega\circ\alpha(g)=-R_g\circ(I+\Omega), \qquad \forall g\in\mathcal{H}
\end{align*}
where $\widetilde{\alpha}(g,\cdot)=\alpha(g,\cdot)+R_g(\cdot)$ and $H=I+\Omega$; and the corresponding linearized  conjugacy equation is
\begin{align}\label{for:10}
  \alpha(g)\circ\Omega-\Omega\circ\alpha(g)=-R_g, \qquad \forall g\in\mathcal{H}
\end{align}
for small $\Omega$ and $R$.

Lemma \ref{le:2} shows that obtaining a $C^\infty$ conjugacy
for one ergodic generator suffices for the proof of Theorem \ref{th:1}. Hence we just need to solve equation
\eqref{for:10} for one ergodic generator.

\smallskip
\noindent \textbf{Step 2}. \emph{Solving the linearized conjugacy equation for a particular
element.}

\smallskip
We classify the obstructions for solving the linearized equation \eqref{for:10} for an individual generator (see Lemma \ref{le:8} and \ref{le:5})
and obtain tame estimates are obtained for the solution. This means finite loss
of regularity in the chosen collection of norms in the Fr$\acute{e}$chet spaces,
such as $C^r$ or Sobolev norms.

\smallskip
\noindent \textbf{Step 3}. \emph{Constructing projection of the perturbation to the twisted cocycle space.}

 Since $R_g(x)$, where $g\in\mathcal{H}$ and $x\in\TT^N$ is a twisted cocycle not over $\alpha$ but over $\widetilde{\alpha}$ (see Lemma 3.3 of \cite{Damjanovic4}), \eqref{for:10} is not a twisted coboundary
equation over the linear action $\alpha$, just an approximation. Then we define the cocycle difference function:
\begin{align}\label{for:57}
  \mathcal{L}(x,y)\stackrel{\rm def}{=}R_x\circ \bar{y}+\bar{x}R_y-R_y\circ \overline{xz}-\bar{y}R_x\circ \bar{z}-\overline{yx}R_z
\end{align}
for $x,y\in \mathcal{H}$ and $z=x^{-1}y^{-1}xy$.

It is clear that if $\mathcal{L}=0$ then $R$ is a twisted cocycle over $\alpha$.  But even if \eqref{for:10} is a twisted coboundary over $\alpha$, it is impossible to produce a $C^\infty$ conjugacy for a single ergodic generator of the action.

Therefore, when $\mathcal{H}$ is nilpotent (not abelian), i.e., to prove Theorem \ref{th:1} we consider $\mathfrak{n}+2$ generators, $g_1$, $d_0$ and $d_j=D_j(g_1,d_0)$, $0\leq j\leq \mathfrak{n}$ ($\mathfrak{n}$ and $D_j$ are defined in \eqref{for:103} of Section \ref{sec:1})
and reduce the problem of solving the linearized equation \eqref{for:10} to solving
simultaneously the following system:
\begin{align}\label{for:11}
  \mathcal{A}\circ\Omega-\Omega\circ \mathcal{A}&=-R_{g_1}\notag\\
  \overline{d_j}\circ\Omega-\Omega\circ \overline{d_j}&=-R_{d_j}
  \end{align}
where $\mathcal{A}=\overline{g_1}$ and $\overline{d_0}=\mathcal{B}$. $\mathcal{A},\,\mathcal{B}$ are ergodic generators constructed at the beginning of Section \ref{sec:2}).

When $\mathcal{H}=\ZZ^2$, to prove Theorem \ref{th:2} we consider $2$ generators $A_1$ and $A_2$
and reduce the problem of solving the linearized equation \eqref{for:10} to solving
simultaneously the following system:
\begin{align}
  A_i\circ\Omega-\Omega\circ A_i&=-R_{A_i},\qquad i=1,\,2.
  \end{align}
As mentioned above, $R$ does not satisfy this twisted cocycle condition:
\begin{align*}
  \mathcal{L}(x,y)=R_x\circ \bar{y}+\bar{x}R_y-R_y\circ \overline{xz}-\bar{y}R_x\circ \bar{z}-\overline{yx}R_z=0.
\end{align*}
However the difference
\begin{align*}
 \mathcal{L}(g_1,d_j), \qquad 0\leq j\leq \mathfrak{n}
\end{align*}
when $\mathcal{H}$ is nilpotent (not abelian) or
\begin{align*}
 \mathcal{L}(A_1,A_2)
\end{align*}
when $\mathcal{H}=\ZZ^2$ are quadratically small with respect to $R$,  (see Lemma \ref{le:3} and Remark \ref{re:2}). More precisely, the perturbation $R$ can be split into two terms
\begin{align*}
 R=\mathcal{P}R+\mathcal{E}(R)
\end{align*}
so that $\mathcal{P}R$ is in the space of twisted cocycles and the error $\mathcal{E}(R)$ is bounded by the size of $\mathcal{L}$ with the fixed loss of regularity.
More precisely, the system
\begin{align}\label{for:104}
  -\mathcal{P}R_{g_1}&=-(R_{g_1}-\mathcal{E}(R_{g_1}))=\mathcal{A}\Omega-\Omega\circ \mathcal{A},\notag\\
   -\mathcal{P}R_{d_j}&=-(R_{d_j}-\mathcal{E}(R_{d_j}))=\overline{d_j}\Omega-\Omega\circ \overline{d_j}, \quad 0\leq j\leq \mathfrak{n}
   \end{align}
when $\mathcal{H}$ is nilpotent (not abelian); or
 \begin{align}\label{for:37}
  -\mathcal{P}R_{A_i}&=-(R_{A_i}-\mathcal{E}(R_{A_i}))=A_i\Omega-\Omega\circ A_i,\qquad 1\leq i\leq 2
   \end{align}
when $\mathcal{H}=\ZZ^2$, have a common solution $\Omega$ after subtracting a part bounded by the size of $\mathcal{L}$, which is quadratically small to $R$.
(see Proposition \ref{po:3} and \ref{po:4}).

\smallskip
\noindent \textbf{Step 4}. \emph{Conjugacy transforms the perturbed action into an action
quadratically close to the target.}

The common approximate solution $\Omega$ to the equations \eqref{for:104} above provides a new perturbation
\begin{align*}
\widetilde{\alpha}^{(1)}\stackrel{\rm def}{=}H^{-1}\circ\widetilde{\alpha}\circ H
\end{align*}
where $H=I+\Omega$, is much closer to $\alpha$ than $\widetilde{\alpha}$; i.e., the new error
\begin{align*}
 R^{(1)}\stackrel{\rm def}{=}\widetilde{\alpha}^{(1)}-\alpha
\end{align*}
is expected to be small with respect to the old error $R$.

\smallskip
\noindent \textbf{Step 5}. \emph{The process is iterated and the conjugacy is obtained.}

\smallskip
The iteration process is set and is carried out, producing a $C^\infty$ conjugacy which works for the action
generated by the $\mathfrak{n}+1$ generators $\mathcal{A}$, $\overline{d_0}=\mathcal{B}$ and $\overline{d_j}$, $0\leq j\leq \mathfrak{n}$ when $\mathcal{H}$ is nilpotent (not abelian);  and works for the action
generated by the $2$ generators $A_1$ and $A_2$ when $\mathcal{H}=\ZZ^2$. Ergodicity assures that it works
for all the other elements of the action $\alpha$.

\smallskip
What is described above highlights the essential features of the KAM scheme for the $\mathcal{H}$ action on torus.
The last two steps can follow
Section 5.2-5.4 in \cite{Damjanovic4} word by word with minor modification. Hence completeness of Step 2 and 3
admits the conclusion of Theorem \ref{th:1} an \ref{th:2}.

At the end of the this seciton, we prove a simple lemma which shows that obtaining a $C^\infty$ conjugacy
for one ergodic generator suffices for the proof of Theorem \ref{th:1} and \ref{th:2}. Next, we state a fact which is necessary for the proof.
\begin{lemma} \cite[Lemma 3.2]{Damjanovic4}\label{le:3}
For any $C^1$ small enough map $F:\TT^N\rightarrow\TT^N$, if $AF=F\circ A$, where $A\in GL(N,\ZZ)$ and is ergodic, then $F=0$.
\end{lemma}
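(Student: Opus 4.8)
The plan is to reduce the statement to a Fourier-coefficient computation on $\TT^N$. First I would observe that since $F$ is $C^1$ and sufficiently small it is null-homotopic, so it lifts to a $C^1$ map $\widetilde F\colon\TT^N\to\RR^N$; moreover, using that $\TT^N$ is connected and that the image of $F$ lies in a small ball, one may choose the lift so that $\widetilde F$ itself is $C^0$-small. Because $A\in GL(N,\ZZ)$ acts linearly on $\RR^N$ and preserves $\ZZ^N$, the relation $A\circ F=F\circ A$ lifts to the statement that $x\mapsto A\widetilde F(x)-\widetilde F(Ax)$ is a continuous $\ZZ^N$-valued function, hence a fixed vector $c_0\in\ZZ^N$.

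Next I would expand $\widetilde F$ in Fourier series, $\widetilde F(x)=\sum_{n\in\ZZ^N}\widehat F(n)e^{2\pi i\langle n,x\rangle}$ with $\widehat F(n)\in\CC^N$, and compare coefficients in $A\widetilde F(x)-\widetilde F(Ax)=c_0$ using $\langle n,Ax\rangle=\langle A^Tn,x\rangle$. For $n\neq 0$ this yields $A\widehat F(n)=\widehat F((A^T)^{-1}n)$, hence $\widehat F((A^T)^kn)=A^{-k}\widehat F(n)$ for all $k\in\ZZ$. Since $\alpha$ is ergodic, no eigenvalue of $A$ — equivalently of $A^T$ — is a root of unity, so $A^T$ has no nonzero periodic vector and each orbit $\{(A^T)^kn:k\in\ZZ\}$, $n\neq 0$, is infinite. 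Parseval then forces $\sum_{k\in\ZZ}\abs{A^{-k}\widehat F(n)}^2\le\sum_m\abs{\widehat F(m)}^2=\norm{\widetilde F}_{L^2}^2<\infty$, and since these summands must tend to $0$ as $k\to\pm\infty$, the vector $\widehat F(n)$ must lie simultaneously in the stable subspace of $A$ and in the stable subspace of $A^{-1}$; these are sums of generalized eigenspaces for disjoint sets of eigenvalues, so $\widehat F(n)=0$. Thus $\widetilde F$ is the constant $c:=\widehat F(0)$, and feeding this into $A\widetilde F-\widetilde F\circ A=c_0$ gives $(A-I)c=c_0\in\ZZ^N$; since $\abs c$ is small this forces $c_0=0$, and since $1$ is not an eigenvalue of $A$ the matrix $A-I$ is invertible, whence $c=0$ and $F=0$.

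The routine parts here are the Fourier bookkeeping and the final spectral remark; the step that needs genuine care is the coordination of smallness with integrality — namely, choosing the lift $\widetilde F$ small and then using smallness of $c=\widehat F(0)$ together with $(A-I)c\in\ZZ^N$ to eliminate the lattice constant $c_0$ — because this is exactly where the hypothesis ``$C^1$ small enough'' is consumed (a non-small $F$ commuting with $A$, e.g.\ $F=A$ itself, need not vanish).
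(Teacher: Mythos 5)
The paper does not prove this lemma; it cites it from [Damjanovic4, Lemma 3.2], so there is no in-text proof to compare against. Your reconstruction is correct and follows the standard Fourier-analytic route used in the cited source: lift $F$ to a $\ZZ^N$-periodic $\widetilde F\colon\TT^N\to\RR^N$ of small $C^0$-norm, note that $A\widetilde F-\widetilde F\circ A$ is a constant lattice vector, propagate each Fourier coefficient along the dual $A^T$-orbit via $\widehat F\bigl((A^T)^k n\bigr)=A^{-k}\widehat F(n)$, and use ergodicity (no eigenvalue of $A$ a root of unity, so no nonzero periodic point of $A^T$) to force the coefficients along the infinite orbit to decay, whence $\widehat F(n)\in E^s(A)\cap E^s(A^{-1})=\{0\}$. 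Two small tightenings: (1) Parseval is overkill — since $\widetilde F$ is continuous, Riemann--Lebesgue already gives $\widehat F(m)\to 0$ as $|m|\to\infty$, and an infinite $A^T$-orbit in $\ZZ^N$ automatically escapes to infinity (a bounded subset of $\ZZ^N$ is finite, and the orbit cannot revisit a point); (2) when passing from the two-sided decay $A^{-k}\widehat F(n)\to 0$ to $\widehat F(n)=0$, it is worth stating that the neutral (modulus-one) component is also forced to vanish, because on that generalized eigenspace $\|A^{-k}v\|$ is bounded below in $k$ and cannot tend to $0$; this is what justifies placing $\widehat F(n)$ in the intersection of the strictly stable and strictly unstable subspaces. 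Your closing step — $c_0\in\ZZ^N$ is small hence zero, and $A-I$ invertible since $1$ is not an eigenvalue — is correct and is indeed where the $C^1$-smallness hypothesis is used.
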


\begin{lemma}\label{le:2}
Let $\alpha$ be a finitely generated nilpotent group $\mathcal{H}$ action by automorphisms of $\TT^N$ such that for
some $g\in \mathcal{H}$ the automorphism $\alpha(g)$ is ergodic. Let $\widetilde{\alpha}$ be a $C^1$ small perturbation
of $\alpha$ such that there exists a $C^\infty$ map $H:\TT^N\rightarrow\TT^N$ which is $C^1$ close to identity
and satisfies
\begin{align*}
 \widetilde{\alpha}(g)\circ H=H\circ\alpha(g).
\end{align*}
Then $H$ conjugates the corresponding maps for all the other elements of the action;
i.e., for all $h\in \mathcal{H}$ we have
\begin{align}\label{for:23}
 \widetilde{\alpha}(h)\circ H=H\circ\alpha(h).
\end{align}
\end{lemma}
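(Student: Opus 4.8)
The plan is to exploit the rigidity Lemma \ref{le:3} together with the fact that $H$ is $C^1$ close to identity, and to propagate the conjugacy relation from the single ergodic generator $g$ to an arbitrary element $h\in\mathcal{H}$ by comparing how $H$ intertwines the two actions. First I would fix $h\in\mathcal{H}$ and introduce the ``defect'' map
\[
 F_h \stackrel{\rm def}{=} H^{-1}\circ\widetilde{\alpha}(h)\circ H\circ\alpha(h)^{-1},
\]
which is a $C^\infty$ map of $\TT^N$ that is $C^1$ close to the identity (because $H$ and $H^{-1}$ are $C^1$ close to identity and $\widetilde\alpha(h)$ is $C^1$ close to $\alpha(h)$). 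The goal becomes to show $F_h=\mathrm{id}$, equivalently that the corresponding small map $\Phi_h\stackrel{\rm def}{=}F_h-\alpha(h)^{-1}\circ\alpha(h)$ on the torus vanishes; more precisely one writes $F_h=\mathrm{id}+f_h$ with $f_h:\TT^N\to\RR^N$ (lifted) small in $C^1$, and one wants $f_h\equiv 0$.

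Next I would establish a commutation relation between $F_h$ and the ergodic generator. Using $\widetilde{\alpha}(g)\circ H=H\circ\alpha(g)$, i.e. $H^{-1}\circ\widetilde\alpha(g)\circ H=\alpha(g)$, and the hypothesis that $\widetilde\alpha$ is a genuine action of $\mathcal{H}$ (so $\widetilde\alpha(g)\circ\widetilde\alpha(h)=\widetilde\alpha(gh)$, etc.), a direct manipulation shows that $\alpha(g)\circ F_h=F_{h'}\circ\alpha(g)$ where $h'$ tracks the effect of conjugating $h$ by $g$. The key point is that when $g$ is central — or more simply when one first treats $h$ commuting with $g$ — one gets exactly $\alpha(g)\circ F_h=F_h\circ\alpha(g)$, and then since $f_h$ is $C^1$ small and intertwines the ergodic automorphism $\alpha(g)$ with itself, Lemma \ref{le:3} forces $f_h=0$, hence \eqref{for:23} for such $h$. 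To reach all of $\mathcal{H}$ I would run an induction along the lower central series (or, equivalently, along a generating set modulo commutators): for a general $h$ the conjugate $g h g^{-1}$ differs from $h$ by an element of $[\mathcal{H},\mathcal{H}]$, so if \eqref{for:23} is already known on $[\mathcal{H},\mathcal{H}]$ (inductively, using that the commutator subgroup is again finitely generated nilpotent and that it still contains ergodic elements unless it is trivial — here one must instead use ergodicity of $g$ directly on the torus, not ergodicity inside the subgroup), then the discrepancy between $F_{ghg^{-1}}$ and $F_h$ is controlled, and one again lands in the situation where a $C^1$-small map commutes with the ergodic $\alpha(g)$, forcing it to vanish.

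Cleanly, the cleanest route avoiding a delicate induction is: show first that $\mathcal{S}=\{h\in\mathcal{H}:\widetilde\alpha(h)\circ H=H\circ\alpha(h)\}$ is a subgroup of $\mathcal{H}$ containing $g$; then show $\mathcal{S}$ is normal, using that for $h\in\mathcal{S}$ and arbitrary $h_0\in\mathcal{H}$ the map $F_{h_0 h h_0^{-1}}$, being $C^1$-small and (by the subgroup/normality bookkeeping) commuting with the ergodic element $\alpha(g)$, must be the identity by Lemma \ref{le:3}; finally observe that a normal subgroup of the finitely generated nilpotent group $\mathcal{H}$ that contains an ergodic element $g$ must be all of $\mathcal{H}$, because in a nilpotent group every nontrivial normal subgroup meets the center nontrivially and the normal closure of $g$ therefore cannot avoid any generator — more concretely, since $\mathcal{H}/\mathcal{S}$ would be a nilpotent quotient in which the image of $g$ is trivial, and one checks directly that this forces the quotient to be trivial given that $\alpha$ is higher rank (so $g$ is not annihilated in any proper algebraic factor). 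The main obstacle I anticipate is precisely this last algebraic step — propagating from the cyclic group generated by $g$ to all of $\mathcal{H}$ — since, unlike the abelian case where one simply notes that every element commutes with $g$, in the nilpotent setting one genuinely needs the group-theoretic input that the normal closure of a suitably chosen ergodic generator exhausts $\mathcal{H}$, and one must be careful that the intermediate defect maps stay $C^1$-small so that Lemma \ref{le:3} keeps applying at every stage of the induction.
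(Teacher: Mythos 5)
Your first sketch --- introduce the defect map $F_h$, handle the central elements first (they commute with $g$ so $F_h$ commutes with $\alpha(g)$ and Lemma \ref{le:3} kills it), then induct up the lower central series using that $[h,g]$ sits one level lower --- is exactly the paper's argument. The paper makes the inductive step concrete: for $h\in S_{j+1}$ write $hg=ghh_1$ with $h_1\in\mathcal H_j$, so $gh_1^{-1}h^{-1}=h^{-1}g$, and then the inductive hypothesis applied to $gh_1^{-1}$ lets you push $\alpha(g)$ through the defect and land on $\bigl(\alpha(h)\circ H^{-1}\circ\widetilde\alpha(h^{-1})\circ H\bigr)\circ\alpha(g)$, so Lemma \ref{le:3} applies. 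You describe this loosely ("the discrepancy\ldots is controlled\ldots one again lands in the situation\ldots") without carrying out the identity $gh_1^{-1}h^{-1}=h^{-1}g$ that actually makes the defect commute with $\alpha(g)$; with that identity filled in, this route is sound.

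Your proposed ``cleanest route,'' however, has a genuine gap, and I would not advertise it as cleaner. First, to show $\mathcal S$ is normal you want $F_{h_0hh_0^{-1}}=\mathrm{id}$, but nothing guarantees $F_{h_0hh_0^{-1}}$ commutes with $\alpha(g)$ when $h_0\notin\mathcal S$; establishing that commutation is essentially the inductive bookkeeping you were trying to avoid. Second, and more seriously, a normal subgroup of a finitely generated nilpotent group that contains one generator need not be the whole group: in the integer Heisenberg group with generators $a,b$ and $c=[a,b]$, the normal closure of $a$ is $\langle a,c\rangle$, which is proper. So even after proving $\mathcal S$ normal you cannot conclude $\mathcal S=\mathcal H$ purely group-theoretically. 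Your attempted rescue via ``higher rank'' and ``no proper algebraic factor annihilating $g$'' both invokes a hypothesis that Lemma \ref{le:2} does not assume (only ergodicity of a single $\alpha(g)$) and conflates quotients of the acting group $\mathcal H$ with algebraic factors of the torus action. The paper avoids all of this by never needing $\mathcal S$ to be normal: the bottom-up induction uses, at each stage, only that the deviation $[h,g]$ lies in a lower term of the series where conjugacy is already known, together with the single ergodic element $\alpha(g)$ supplied by the hypotheses.
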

\begin{proof}
Suppose $\mathcal{H}$ has a lower central series of length $n$, i.e., a sequence of subgroups
\begin{align*}
\{e\}=\mathcal{H}_0\lhd \mathcal{H}_1\lhd\cdots\lhd \mathcal{H}_n=\mathcal{H}
\end{align*}
such that $[\mathcal{H}, \mathcal{H}_{j+1}]=\mathcal{H}_j$ where $[\mathcal{H}, \mathcal{H}_{j+1}]$ denotes the commutator of $\mathcal{H}$ and $\mathcal{H}_{j+1}$. Fix a set of generators for each $\mathcal{H}_j$ and denote this set by $S_j$. Note that each $S_j$ can be chosen to be finite (see Lemma $2$ of \cite{brown}).

We will use induction to show $H$ conjugates all the other elements of $\alpha$ and $\widetilde{\alpha}$.
Let $h$ be any element in $S_1$. Since $hg=gh$ it follows from \eqref{for:23} and
commutativity that
\begin{align*}
 &\alpha(g)\circ \tilde{h}=\tilde{h}\circ\alpha(g)
 \end{align*}
where $\tilde{h}=\alpha(h)\circ\mathcal{H}^{-1}\circ\widetilde{\alpha}(h)^{-1}\circ\mathcal{H}$. Lemma \ref{le:3} shows that $\tilde{h}=I$, which means
\begin{align*}
 \widetilde{\alpha}(h)\circ H=H\circ\alpha(h).
\end{align*}
Arbitrariness of $h$ implies that $H$ conjugates all the elements in $A_1$ of $\alpha$ and $\widetilde{\alpha}$

Suppose $H$ conjugates all the elements in $\mathcal{H}_j$ of $\alpha$ and $\widetilde{\alpha}$. For any $h\in S_{j+1}$ since $[\mathcal{H}, \mathcal{H}_{j+1}]\leq \mathcal{H}_j$, there exits $h_1\in \mathcal{H}_j$ such that $hg=ghh_1$. By assumption, we have
\begin{align*}
 \widetilde{\alpha}(gh_1^{-1})\circ H=H\circ\alpha(gh_1^{-1}).
\end{align*}
We obtain
\begin{align*}
 &\alpha(g)\circ (\alpha(h)\circ H^{-1}\circ\widetilde{\alpha}(h)^{-1}\circ H)\\
 &=\alpha(h)\circ (\alpha(gh_1^{-1})\circ H^{-1})\circ\widetilde{\alpha}(h)^{-1}\circ H\\
 &=\alpha(h)\circ H^{-1}(\widetilde{\alpha}(gh_1^{-1})\circ\widetilde{\alpha}(h)^{-1})\circ\mathcal{H}\\
 &=\alpha(h)\circ H^{-1}\widetilde{\alpha}(h^{-1})\circ(\widetilde{\alpha}(g)\circ\mathcal{H})\\
 &=(\alpha(h)\circ H^{-1}\widetilde{\alpha}(h^{-1})\circ\mathcal{H})\circ\alpha(g).
 \end{align*}
By using Lemma \ref{le:3} again, we get $\alpha(h)\circ H^{-1}\widetilde{\alpha}(h^{-1})\circ H=I$, which implies that $H$ conjugates $\alpha(h)$ and $\widetilde{\alpha}(h)$.  Since $S_{j+1}$ is a set of germinators of $\mathcal{H}_{j+1}$, $H$ conjugates all the elements in $\mathcal{H}_{j+1}$ of $\alpha$ and $\widetilde{\alpha}$. Hence we get the conclusion.
\end{proof}

\subsection{Basic facts and some notations}

\subsection{Some notations} \label{sec:1}
We try as much as possible to develop a unified system of notations. We will use notations from this section throughout   subsequent sections. So the reader
should consult this section  if an unfamiliar symbol appears.
\begin{enumerate}
\item \label{for:103} Suppose $\mathcal{H}$ is nilpotent. If $\mathcal{H}$ has a lower central series of length $\mathfrak{n}$, i.e., a sequence of subgroups
\begin{align*}
\{e\}=\mathcal{H}_0\lhd \mathcal{H}_1\lhd\cdots\lhd \mathcal{H}_\mathfrak{n}=\mathcal{H}
\end{align*}
such that $[\mathcal{H}, \mathcal{H}_{j+1}]=\mathcal{H}_j$ where $[\mathcal{H}, \mathcal{H}_{j+1}]$ denotes the commutator group of $\mathcal{H}$ and $\mathcal{H}_{j+1}$.

For any two elements $x,\,y\in\mathcal{H}$, define $\mathfrak{n}-1$ elements in $\mathcal{H}$ as follows:
$D_1(x,y)=x^{-1}y^{-1}xy$, $D_{i+1}(x,y)=x^{-1}D_{i}(x,y)^{-1}xD_i(x,y)$, $1\leq i\leq \mathfrak{n}-2$.

\smallskip

\item \label{for:12}For any $F\in GL(N,\ZZ)$, $\norm{F}:=\sup\{\norm{Fv}:v\in\RR^N\text{ with }\norm{v}=1\}$ and $\norm{F}_{\text{min}}:=\min\{\norm{Fv}:v\in\RR^N\text{ with }\norm{v}=1\}$. Then $\norm{F}_{\text{min}}=\norm{F^{-1}}^{-1}$. For any $m$-Jordan block $J$ of $F$ with eigenvalue $\lambda$, we have
    \begin{align}\label{for:39}
     \norm{F^n\mid_{J}}\leq C\abs{\lambda}^n(\abs{n}+1)^m,\qquad \forall n\in\ZZ.
    \end{align}
For a sequence of matrices $F_i\in GL(N,\ZZ)$ $\prod_{i=1}^nF_i\overset{\text{def}}{=}F_1\cdots F_n$.
\smallskip
  \item Let $F\in GL(N,\ZZ)$ be an ergodic integer matrix. The dual map $F^*$ on $\ZZ^N$ induces a decomposition of $\RR^N$ into expanding,
neutral and contracting subspaces. We will denote the expanding subspace by $V_1(F)$, the contracting subspace by $V_3(F)$ and the neutral subspace by $V_2(F)$.
\begin{align*}
 \RR^N_F=V_1(F)\bigoplus V_2(F) \bigoplus V_3(F).
\end{align*}
All three subspaces $V_i(F)$, $i=1$, $2$, $3$ are $F$ invariant and
\begin{align}
  \norm{F^iv}&\geq C\rho^i\norm{v}, &\rho>1,\quad &i\geq 0,\quad &v\in V_1(F), \notag\\
  \norm{F^iv}&\geq C\rho^{-i}\norm{v}, &\rho>1,\quad &i\leq 0,\quad &v\in V_3(F), \notag\\
  \norm{F^iv}&\geq C\abs{i}^{-N}\norm{v}, &\rho>1,\quad &i\neq 0,\quad &v\in V_2(F)\label{for:4}.
\end{align}
Here $C$ is a constant dependent on $F$.

\smallskip
  \item For $v\in\ZZ^N$, $\abs{v}\overset{\text{def}}{=}\max\{\norm{\pi_1(v)},\,\norm{\pi_2(v)},\,\norm{\pi_3(v)}\}$ where $\norm{\cdot}$ is Euclidean
norm and $\pi_i(v)$ are projections of $v$ to subspaces $V_i$ ($i=1,\,2,\,3$) from \eqref{for:4},
that is, to the expanding, neutral, and contracting subspaces of $\RR^N$
for $F$; we will use the norm which is more convenient in a particular situation;
those are equivalent norms, the choice does not affect any results).

\smallskip
  \item \label{for:111}For $v\in\ZZ^N$ we say $v$ is \emph{mostly} in $i(F)$ for $i=1,\,2,\,3$ and will write $v\hookrightarrow i(F)$, if
the projection $\pi_i(v)$ of $v$ to the subspace $V_i$ corresponding to $F$ is sufficiently
large:
\begin{align*}
 \abs{v}=\norm{\pi_i(v)};
\end{align*}
if furthermore,
\begin{align*}
 \abs{v}=\norm{\pi(v)}>\norm{\pi_j(v)}, \qquad j\neq i
\end{align*}
then we say that $v$ is \emph{absolutely} in $i(F)$ and write $v\rightarrow i(F)$. The notation $v\hookrightarrow 1,2(F)$  will be used for $v$ which is mostly in $1(F)$ or mostly
in $2(F)$. The notation $v\rightarrow 1,2(F)$ is defined accordingly.

\smallskip
\item \label{for:105} Call $n\in\ZZ^N$ minimal
and denote it by $\mathcal{M}_F(n)$ if $v$ is the lowest point on its $F$ orbit in the sense that $n\hookrightarrow 3(F)$
and $Fn\rightarrow 1,2(F)$. We can assume there is one such
minimal point on each nontrivial dual $F$ orbit (other wise we consider $F^n$ where $n$ is big enough), we choose one on each dual $F$ orbit
and denote it by $\mathcal{M}_F(n)$. Then $\mathcal{M}_F(n)$ is substantially large both in $1,2(F)$ and in $3(F)$. Set $E_F=\{\mathcal{M}_F(v):v\in\ZZ^n\backslash 0\}$.

\smallskip

\item \label{for:44}Let $A$ and $B$ be the two ergodic generators for $\alpha$ when $\mathcal{H}$ is nilpotent. In what follows, $C$ will denote any constant that depends only on the given
linear $\mathcal{H}$ action $\alpha$ and on the dimension of the
torus. $C_{x,y,z,\cdots}$ will denote any constant that in addition to the above depends
also on parameters $x$, $y$, $z$, $\cdots$.

\smallskip
\item \label{for:110}Let $\theta$ be a $C^\infty$ function. Then we can write $\theta=\sum_{n\in\ZZ^N}\widehat{\theta}_ne_n$
where $e_v=e^{2\pi \sqrt{-1}v\cdot x}$ are the characters. Then
\begin{itemize}
  \item [(i)] $\norm{\theta}_a\overset{\text{def}}{=}\sup_v\abs{\widehat{\theta}_v}\abs{v}^a$, $a>0$.

  \smallskip

  \item [(ii)] The following relations hold (see, for example, Section 3.1 of \cite{LLAVE}):
  \begin{align*}
   \norm{\theta}_r\leq C\norm{\theta}_{C^r},\qquad \norm{\theta}_{C^r}\leq C\norm{\theta}_{r+\sigma}
  \end{align*}
  where $\sigma>N+1$, and $r\in\NN$.

  \smallskip

  \item [(iii)] For any $F\in SL(N,\ZZ)$ $(\widehat{\theta\circ F})_n=\widehat{\theta}_{(F^\tau)^{-1}n}$ where $F^\tau$ denotes transpose matrix.
  We call $(F^\tau)^{-1}$ the dual map on $\ZZ^N$. To simplify the notation in the rest of the paper, whenever there is no
confusion as to which map we refer to we will denote the dual map by the same
symbol $F$.
\end{itemize}

\smallskip
\item \label{for:46}For a map $\mathcal{F}$ with coordinate functions $f_i$ ($i=1,\cdots,k$) define $\norm{\mathcal{F}}_a\overset{\text{def}}{=}\max_{1\leq i\leq k}\norm{f_i}_a$. For two maps $\mathcal{F}$ and $\mathcal{G}$ define $\norm{\mathcal{F},\mathcal{G}}_a\overset{\text{def}}{=}\{\norm{\mathcal{F}}_a,\norm{\mathcal{G}}_a\}$.
$\norm{\mathcal{F}}_{C^r}$ and $\norm{\mathcal{F},\mathcal{G}}_{C^r}$ are defined similarly. For any $v\in \ZZ^N$
$\widehat{\mathcal{F}}_v\overset{\text{def}}{=}
((\widehat{f_1})_v,\cdots,(\widehat{f_k})_v)$. For any $F\in GL(N,\ZZ)$,
\begin{align*}
 \Delta_F\mathcal{F}\overset{\text{def}}{=}F\mathcal{F}-\mathcal{F}\circ F.
\end{align*}

\end{enumerate}
\subsection{Basic facts about nilpotent actions on torus}
For an abelian action over a compact manifold, there
is a splitting of the tangent bundle into Lyapunov spaces (see \cite{Spatzier}). Proposition \ref{po:1} shows that similar result holds for nilpotent action by toral automorphism. In fact, it is not hard to show it also applies to general nilpotent actions for any length $\mathfrak{n}$. The case of $\mathfrak{n}=2$ was prove in \cite{wang}.

The next two lemmas are essential for the proof of the proposition. The first lemma shows that for any products with elements coming from a finite set, we can reorder the the product with a tame price: the word growth rate is polynomial; furthermore, if the size of these elements increase tamely, then the size of the product also has tame increasing rate.
\begin{lemma}\label{le:6}
Let $S$ be a finite set in $\mathcal{H}$ and set $S'=\{[s^{\delta_i}_i,\cdots[s_1^{\delta_1},s_2^{\delta_2}]\cdots]:s_i\in S,\,\delta_i=\pm1\}$. Then there exists a polynomial $p$, such that any product $\prod_{i=1}^ns_i$ where $s_i\in S$ can be expressed as
\begin{align*}
 \prod_{i=1}^ns_i=d(s_1')^{k_1}\cdots(s_j')^{k_j}, \qquad j\leq \sharp(S),\quad \sum_{i=1}^jk_i=n
\end{align*}
where $s_i'\in \{s_i:1\leq i\leq n\}$, and $(s_1')^{k_1}\cdots(s_j')^{k_j}$ is a reordered product of $\prod_{i=1}^ns_i$ and $d$ is a product of elements in $S'$ with word length bounded by $p(n)$.
\end{lemma}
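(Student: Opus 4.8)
The plan is to prove the statement by induction on the length $\mathfrak{n}$ of the lower central series of $\mathcal{H}$, using the standard commutator identities to push letters past one another while keeping careful track of the word length of the correction terms. First I would fix the finite generating set $S$ and record the basic collection identities: $ts = st\,[t^{-1},s^{-1}]^{-1}$ (or whatever normalization matches $D_1(x,y)=x^{-1}y^{-1}xy$), together with $[ab,c] = {}^a[b,c]\,[a,c]$ and $[a,bc] = [a,b]\,{}^b[a,c]$, where ${}^ax = axa^{-1}$. The key structural point is that each basic commutator $[s_i,s_j]$ lies in $\mathcal{H}_{\mathfrak{n}-1}$, and conjugating such an element by a generator changes it only by a commutator of higher weight, i.e. an element of $\mathcal{H}_{\mathfrak{n}-2}$; so all the ``debris'' produced in the sorting process lives in $\mathcal{H}_{\mathfrak{n}-1}$, which is nilpotent of shorter length.

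The induction scheme I would use is the following. In the product $\prod_{i=1}^n s_i$, perform a bubble-sort-type procedure that collects all occurrences of the first generator to the left, then all occurrences of the second generator, and so on, so that after sorting the product has the form $d'\,(s_1')^{k_1}\cdots(s_j')^{k_j}$ with $\sum k_i = n$ and $j\le \sharp(S)$. Each elementary transposition of two adjacent letters $s_a s_b \to s_b s_a c$ introduces one commutator $c\in S'\cap\mathcal{H}_{\mathfrak{n}-1}$ (up to conjugation), and the total number of transpositions needed is $O(n^2)$, so $d'$ is, before simplification, a product of $O(n^2)$ elements each of which is a conjugate (by a word of length $O(n)$ in $S$) of an element of $S'$. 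Now I would invoke the inductive hypothesis applied to the group $\mathcal{H}_{\mathfrak{n}-1}$ together with its own finite generating set obtained from $S'$ and the conjugates of $S'$ by single generators: the conjugation ${}^s c$ can itself be rewritten as $c$ times a higher-weight commutator, and one pushes all these conjugations outward, again at polynomial cost in $n$. The base case $\mathfrak{n}=1$ (abelian) is immediate: no commutators arise and $d'$ is trivial.

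The bookkeeping that makes the polynomial bound work is that each of the three operations — transposing two generators, moving a conjugation ${}^s(\cdot)$ across a product, and rewriting a conjugate of a basic commutator in terms of basic commutators of the next weight — multiplies the number of ``atoms'' by at most a polynomial factor in the current word length, and the weight strictly drops with each such rewriting, so the recursion terminates after $\mathfrak{n}$ stages. Composing $\mathfrak{n}$ polynomial bounds yields a single polynomial $p$ (of degree depending on $\mathfrak{n}$ and $\sharp(S)$) bounding the word length of $d$ in the elements of $S'$. The final assertion that the $s_i'$ occurring in the sorted monomials are among the $\{s_i:1\le i\le n\}$ actually appearing, and that $j\le\sharp(S)$, is automatic from the sorting: we never introduce a new generator, we only reorder.

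The main obstacle I anticipate is not the existence of such a rewriting — that is classical Hall–Petrescu/collection-process theory — but rather getting the \emph{polynomial} (as opposed to merely recursive or exponential) bound on the word length of $d$, and doing so cleanly enough that it can be fed into the later tame estimates. The naive collection process can produce word lengths that grow faster than polynomially if one is not careful about the order in which commutators are collected; the fix is to collect strictly by weight (highest weight innermost, collected last) so that at each stage one is only sorting within a fixed nilpotency class and the count at each stage is genuinely $O(n^{c})$ for a fixed $c$. I would therefore organize the induction so that the inductive statement already asserts the polynomial bound, allowing the $O(n^2)$ transpositions at the top level to compose with the polynomial bound from $\mathcal{H}_{\mathfrak{n}-1}$ to give again a polynomial — this is where care is needed, since one is substituting a polynomial in $n$ into the word-length parameter of the inductive bound, so the degree grows with $\mathfrak{n}$ but stays finite.
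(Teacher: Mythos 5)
Your proposal is correct in outline, but it is organized differently from the paper's proof. You run an induction on the length $\mathfrak{n}$ of the lower central series, do a bubble sort of the letters at the top level, observe that the debris lives in $\mathcal{H}_{\mathfrak{n}-1}$, and then recurse. The paper instead proves the statement in one pass with no structural induction: it moves the last letter $s_n$ leftward past $s_{n-1},\dots,s_1$, and tracks the accumulated commutator debris by \emph{weight} via a two-index counter $\beta_{k,i}$ (the number of weight-$k$ commutators present after $i$ transpositions), which satisfies the Pascal-type recursion $\beta_{2,i}\leq\beta_{2,i-1}+1$ and $\beta_{k,i}\leq\beta_{k,i-1}+\beta_{k-1,i-1}$. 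That recursion gives $\beta_{k,i}=O(i^{k-1})$ and hence a polynomial bound of degree about $\mathfrak{n}-1$ directly, with no appeal to a smaller group. What the paper's version buys is that it never has to change generating set: all debris is a literal left-normed commutator in the original $S'$ at every stage, so the polynomial is exhibited explicitly. What your version buys is conceptual clarity (weight strictly drops, recursion terminates), at the cost of having to set up a new finite generating set for $\mathcal{H}_{\mathfrak{n}-1}$ and verify that the polynomial bounds compose.

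The one place where your sketch is genuinely thinner than the paper is exactly the step you flag yourself: after the bubble sort the debris is a product of $O(n^2)$ elements each of which is a \emph{conjugate} of an element of $S'$ by a word of length $O(n)$, so $d'$ is not yet a word in a fixed finite generating set of $\mathcal{H}_{\mathfrak{n}-1}$. Rewriting ${}^{w}c$ with $\lvert w\rvert=O(n)$ as a genuine word in $S'$ of controlled length is precisely the collection-process bookkeeping, and saying ``push all these conjugations outward, again at polynomial cost'' is gesturing at, rather than proving, the $\beta_{k,i}$ recursion. So the structural induction does not let you avoid that counting; it only defers it. If you wanted to complete your version cleanly, you would either prove the weight-indexed recursion as a lemma first (at which point the structural induction becomes unnecessary), or strengthen the inductive statement to allow conjugates of $S'$ by words of variable length with an explicit polynomial dependence on that length, which is more bookkeeping than the direct count. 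None of this is a logical gap — the mechanism is sound — but the heart of the lemma is the quantitative bound, and in your write-up that bound is still stated rather than derived.
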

\begin{proof}
We prove the following claim $(*)$ instead, which implies the conclusion immediately.

\noindent $(*)$ There exists a polynomial $p$, such that any product $(\prod_{i=1}^ns_i)(s_n\prod_{i=1}^{n-1} s_i)^{-1}$ where $s_i\in S$ can be expressed as s product of elements in $S'$ with word length bounded by $p(n)$.

It is clear that $S'$ is also a finite set. Let $S'_i$ denote the set of elements in $S'$ with (commutator) length $i$. Then $S'=\bigcup_{i=2}^\mathfrak{n} S'_i$. Moving $s_n$ from right side of $s_{n-1}$ to left side of $s_{n-1}$, we have
\begin{align*}
 \prod_{i=1}^ns_i=s_1\cdots s_{n-2}(d_{2,1}s_n)s_{n-1}
\end{align*}
where $d_{2,1}=[s_{n-1}^{-1},s_n^{-1}]\in S_2'$ if not trvial.

Next, we move $d_{2,1}s_n$ from right side of $s_{n-2}$ to left side of $s_{n-2}$. That is:
\begin{align*}
 s_1\cdots s_{n-2}(d_{2,1}s_n)s_{n-1}=s_1\cdots s_{n-3}(d_{2,1}'d_{2,1}d_{3,1}s_n)s_{n-2}s_{n-1}
\end{align*}
where $d_{3,1}=[s_{n-2}^{-1},s_n^{-1}]\in S_2'$ and $d_{2,1}'=[s_{n-2}^{-1},\,d_{2,1}^{-1}]\in S'_3$ if they are not trivial.

We continue this process. In process of step $i$, we have a form
\begin{align*}
 s_1s_2\cdots s_{n-i+1}(e_{1}\cdots e_{j(i)}s_n)s_{n-i+2}\cdots s_{n-1}
\end{align*}
where $e_j\in S'$. We need to move the product $e_{1}\cdots e_{j(i)}s_n$ from right side of $s_{n-i+1}$ to left side of $s_{n-i+1}$. That is, we get
\begin{align*}
 &s_1s_2\cdots s_{n-i+1}(e_{1}\cdots e_{j(i)}s_n)s_{n-i+2}\cdots s_{n-1}\\
 &=s_1s_2\cdots s_{n-i}(e'_{1}e_1\cdots e'_{j(i)}e_{j(i)}d_{i+1,1}s_n)s_{n-i+1}\cdots s_{n-1}
\end{align*}
where $d_{i+1,1}=[s_{n-i+1}^{-1},s_n^{-1}]\in S_1'$ and $e_j'=[s_{n-2}^{-1},\,e_j^{-1}]$. Note that the length of $e_j'$ is equal to $1$ plus that of $e_j$ if not trivial.

We denote the number of elements of length $k$ in the form $e'_{1}e_1\cdots e'_{j(i)}e_{j(i)}d_{i+1,1}$ by $\beta_{k,i}$. For examples, for the word $e_1e_2e_1e_3e_4$ where $e_1,e_3\in S_2'$ and $e_1,e_4\in S_3'$, $\beta_{2,i}=3$, $\beta_{3,i}=2$. For repeating elements, we count the number as if they are different elements.
Then we have
\begin{align*}
 \beta_{2,i}\leq\beta_{2,i-1}+1,\quad \beta_{k,i}\leq\beta_{k,i-1}+\beta_{k-1,i-1}, \quad \forall k\leq \mathfrak{n}.
\end{align*}
Above relations show that we get a polynomil $p$ such that
\begin{align*}
 \sum_{k=2}^\mathfrak{n}\beta_{k,i}\leq p(n),\qquad \forall i\in\NN.
\end{align*}
Then we finish the proof.
\end{proof}

\begin{lemma}\label{le:11}
Let $\mathcal{H}$ be a nilpotent subgroup in $GL(N,\ZZ)$. Suppose for any element $g\in \mathcal{H}$, all eigenvalues of $g$ are of absolute value $1$. Let $S$ be a finite set in $\mathcal{H}$. Then there exists a polynomial $p$, such that for any product $\prod_{i=1}^ns_i$ where $s_i\in S$, $\norm{\prod_{i=1}^ns_i}\leq p(n)$.
\end{lemma}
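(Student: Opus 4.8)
The plan is to reduce Lemma \ref{le:11} to the reordering statement of Lemma \ref{le:6} and then estimate the norm of each factor separately. First I would apply Lemma \ref{le:6} to the product $\prod_{i=1}^n s_i$, writing it as $d\,(s_1')^{k_1}\cdots (s_j')^{k_j}$ with $j\leq \sharp(S)$, $\sum_i k_i=n$, and $d$ a product of at most $p_0(n)$ elements of the commutator set $S'$, for some polynomial $p_0$. Since the norm of a product is submultiplicative, $\norm{\prod_{i=1}^n s_i}\leq \norm{d}\cdot\prod_{i=1}^j\norm{(s_i')^{k_i}}$, so it suffices to bound each of these pieces polynomially in $n$.

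Next I would handle the powers $(s_i')^{k_i}$. Each $s_i'\in S$ has all eigenvalues of absolute value $1$; over $\CC$ it is conjugate to a sum of Jordan blocks, each with eigenvalue $\lambda$ of modulus $1$ and size $m\leq N$. By the estimate \eqref{for:39}, $\norm{(s_i')^{k}\mid_J}\leq C\abs{\lambda}^{k}(\abs{k}+1)^m=C(\abs{k}+1)^m\leq C(\abs{k}+1)^N$. Passing back through the fixed conjugating matrices (which depend only on $S$, a finite set, hence contribute a bounded constant), we get $\norm{(s_i')^{k_i}}\leq C_S(k_i+1)^N\leq C_S(n+1)^N$. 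Therefore $\prod_{i=1}^j\norm{(s_i')^{k_i}}\leq C_S^{\,j}(n+1)^{jN}\leq C_S^{\,\sharp(S)}(n+1)^{N\sharp(S)}$, which is polynomial in $n$.

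It remains to bound $\norm{d}$. Here $d$ is a product of at most $p_0(n)$ elements from the finite set $S'$, and every element of $S'$ again has all eigenvalues of modulus $1$ (being a commutator word in elements of $\mathcal{H}$, which is closed under commutators and whose members all have unimodular spectrum). One might try to iterate the same power estimate, but $d$ is an arbitrary word in $S'$, not a single power, so \eqref{for:39} does not apply directly. The cleanest route is an induction on the nilpotency length $\mathfrak{n}$: for $\mathfrak{n}=1$ the group $\mathcal{H}$ is abelian, all elements are simultaneously (over $\CC$) upper-triangularizable with unimodular diagonal, and a product of $L$ such matrices has norm at most $C(L+1)^{N}$ by the Jordan estimate applied to the common triangular form; for the inductive step, apply Lemma \ref{le:6} \emph{again} to $d$ relative to $S'$, peeling off the powers $(s')^{k}$ (bounded polynomially as above, with the exponents summing to at most $p_0(n)$) and leaving a remainder word in $(S')'\subset \mathcal{H}_{\mathfrak{n}-1}$ of polynomially bounded length, to which the inductive hypothesis applies. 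Composing the finitely many (depending only on $\mathfrak{n}\leq N$ and $\sharp(S)$) polynomial bounds yields $\norm{d}\leq p_1(n)$ for a polynomial $p_1$, and hence $\norm{\prod_{i=1}^n s_i}\leq p_1(n)\cdot C_S^{\sharp(S)}(n+1)^{N\sharp(S)}=:p(n)$.

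I expect the main obstacle to be exactly this last step: controlling $\norm{d}$, since $d$ is an uncontrolled word (not a power) in the commutator alphabet $S'$, so the Jordan-block bound \eqref{for:39} cannot be invoked verbatim. The induction on nilpotency length is what makes it go through — each application of Lemma \ref{le:6} trades a word in one step of the lower central series for powers (which are directly estimable) plus a polynomially shorter word one step further down the series — but one must check that the polynomial degrees and the constants stay bounded through the at most $\mathfrak{n}$ rounds of this descent, which is where the finiteness of $S$, $S'$, $(S')'$, \dots\ (Lemma $2$ of \cite{brown}) and the uniform dimension bound $m\leq N$ on Jordan blocks are used.
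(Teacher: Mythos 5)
Your proposal follows essentially the same line as the paper's proof: both reduce to Lemma \ref{le:6} to reorder the product, bound the resulting powers $(s_i')^{k_i}$ via the Jordan-block estimate \eqref{for:39}, and handle the commutator remainder $d$ by descending through the lower central series. The only cosmetic difference is that the paper phrases the descent as an induction on the level $i$ with $S\subset\mathcal{H}_i$ (applying the inductive hypothesis directly to the finite set $S'\subset\mathcal{H}_{i-1}$), whereas you unroll that induction into repeated applications of Lemma \ref{le:6}; the content and the reason the recursion terminates (nilpotency) are identical.
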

\begin{proof}
We prove by using induction. Denote by $S=\{s_1,\cdots,s_d\}$ and set $S'=\{[s_i,\cdots[s_1,s_2]\cdots]:s_i\in S\}$.  If $S\subset\mathcal{H}_1$, since $\mathcal{H}_1$ is abelian we can write
\begin{align*}
 \prod_{i=1}^n s_{j_i}=s_1^{k_1}\cdots s_{d}^{k_d}
\end{align*}
where $\sum_{i=1}^j k_i=n$ and $s_1^{k_1}\cdots s_{d}^{k_j}$ is a reordered product of $\prod_{i=1}^n s_{j_i}$. Since each $s_i$ only has polynomial growth rate, there exist $C_S>0$ such that
\begin{align*}
 \norm{\prod_{i=1}^ns_i}\leq C_S\Pi_{i=1}^s(\abs{k_i}+1)^{N}\leq C_S (n+1)^{sN}.
\end{align*}
Then we proved the case of $S\subset\mathcal{H}_1$.

Suppose the conclusion holds for any $S\subset\mathcal{H}_i$. Next, we will prove the case when $S\subset\mathcal{H}_{i+1}$. By Lemma \ref{le:6} we can write
\begin{align*}
 \prod_{i=1}^ns_{j_i}=rs_1^{k_1}\cdots s_{d}^{k_d}
\end{align*}
where $\sum_{i=1}^j k_i=n$, $s_1^{k_1}\cdots s_{d}^{k_d}$ is a reordered product of $\prod_{i=1}^n s_{j_i}$ and $r$ is a product of elements in $S'$ with word length bounded by $f(n)$ for a polynomial $f$ determined by $S$. Since $S'\subset \mathcal{H}_{i}$, by assumption there exists a polynomial $f_1$ determined by $S'$ such that
\begin{align*}
  \norm{r}\leq C_S f_1(f(n)).
\end{align*}
Hence we get
\begin{align*}
 \norm{\prod_{i=1}^ns_{j_i}}\leq \norm{s_1^{k_1}\cdots s_{d}^{k_d}} \cdot\norm{r}\leq C_S (n+1)^{sN}f_1(f(n)),
\end{align*}
which implies the conclusion for the case of $S\subset\mathcal{H}_{i+1}$. Then we finish the proof.
\end{proof}

By using the two lemmas, we can proceed to the proof of the following:
\begin{proposition}\label{po:1}
Suppose $\mathcal{H}$ has a lower central series of length $\mathfrak{n}$. Then
\begin{enumerate}
  \item \label{for:43}all Lyapunov exponents of $\alpha(z)$ are $0$ if $z\in \mathcal{H}_{\mathfrak{n}-1}$;

  \smallskip

  \item \label{for:49}for any $x,\,y\in \mathcal{H}$, $\alpha(x)$ and $\alpha(y)$ preserve Lyapunov spaces of each other;

  \smallskip

  \item \label{for:50}the Lyapunov exponents of $\alpha(xy)$ is sum of corresponding Lyapunov exponents of $x$ and $y$.

  \smallskip
  \item \label{for:115}let $S=\{s_1,\cdots, s_d\}$ be a finite set in $\mathcal{H}$. Then there exists a polynomial $p$ such that
  \begin{align*}
   C_S'p(n)^{-1}&\leq \norm{s_1^{k_1}\cdots s_d^{k_d}(\prod_{i=1}^n s_{j_i})^{-1}}_{\text{min}}\\
   &\leq\norm{s_1^{k_1}\cdots s_d^{k_d}(\prod_{i=1}^n s_{j_i})^{-1}}\leq C_Sp(n)
  \end{align*}
  where $s_1^{k_1}\cdots s_d^{k_d}$ is a reordered product of $\prod_{i=1}^n s_{j_i}$.
\end{enumerate}
\end{proposition}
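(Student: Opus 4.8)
The plan is to prove Proposition \ref{po:1} by combining the structural input from Lemmas \ref{le:6} and \ref{le:11} with the basic linear algebra of commuting toral automorphisms, handling the four parts in the order (1), (2)/(3), (4), since each later part leans on the earlier ones.

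First I would establish (1). Since $\mathcal{H}$ is nilpotent of class $\mathfrak{n}$, the subgroup $\mathcal{H}_{\mathfrak{n}-1} = [\mathcal{H},\mathcal{H}_{\mathfrak{n}}]$ lies in the commutator subgroup, hence so does $\alpha(\mathcal{H}_{\mathfrak{n}-1}) \subset \mathrm{SL}(N,\ZZ)$; more to the point, for $z \in \mathcal{H}_{\mathfrak{n}-1}$ the element $\alpha(z)$ is a product of iterated commutators of the generators. The key observation is that the powers $\alpha(z)^k$ for $k \in \ZZ$ all arise as products of boundedly many (in fact $O(\mathrm{poly}(k))$) copies of a fixed finite generating set $S'$ of conjugates/commutators, so Lemma \ref{le:11} — once we check its hypothesis applies — gives $\|\alpha(z)^k\| \le p(|k|)$ for a polynomial $p$. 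But a matrix in $\mathrm{GL}(N,\ZZ)$ whose integer powers grow only polynomially must have all eigenvalues of absolute value $1$ (an eigenvalue $\lambda$ with $|\lambda| \ne 1$ forces exponential growth of $\|\alpha(z)^k\|$ along the corresponding eigenvector as $k \to +\infty$ or $k \to -\infty$), which is precisely the statement that all Lyapunov exponents of $\alpha(z)$ vanish. To invoke Lemma \ref{le:11} cleanly I should first verify that \emph{every} element of $\mathcal{H}_{\mathfrak{n}-1}$ has eigenvalues on the unit circle; this I would get by a downward induction on the lower central series: elements of $\mathcal{H}_{1} = [\mathcal{H},\mathcal{H}_2]$ generate an abelian group of integer matrices which, being commutators, have determinant $1$ and (by Kronecker's theorem plus the fact that any finite-index-free abelian subgroup structure forces no Salem/Pisot behavior in the commutator) unit-circle spectrum — more simply, I would argue that if some $z \in [\mathcal{H},\mathcal{H}]$ had an eigenvalue off the unit circle, its powers would grow exponentially, contradicting the polynomial bound that Lemma \ref{le:11} yields for any finite subset of the nilpotent group $\mathcal{H}$ once we know the bottom layer is quasi-unipotent. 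I expect this bootstrapping — getting the unit-circle spectrum of the bottom of the central series so that Lemma \ref{le:11} becomes applicable — to be the main obstacle, and I would resolve it by noting that $\alpha$ restricted to $[\mathcal H,\mathcal H]$ lands in a nilpotent subgroup of $\mathrm{SL}(N,\ZZ)$ whose elements, being commutators of matrices with a common invariant flag refinement, are unipotent in a suitable block decomposition.

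For parts (2) and (3), I would use (1) together with the theory of commuting semisimple/unipotent Jordan decompositions. Fix $x \in \mathcal{H}$; its semisimple part $\alpha(x)_s$ has a Lyapunov (coarse Oseledets) decomposition $\RR^N = \bigoplus_\chi E_\chi(x)$. For $y \in \mathcal{H}$, the commutator $[x,y]$ lies in $\mathcal{H}_{\mathfrak{n}-1}$ if $y \in \mathcal{H}_{\mathfrak n}=\mathcal H$ — more carefully, $[\alpha(x),\alpha(y)] \in \alpha(\mathcal{H}_{\mathfrak{n}-1})$ has unit spectrum by (1), so on each eigenspace block the actions of $\alpha(x)$ and $\alpha(y)$ commute up to a quasi-unipotent correction, which forces $\alpha(y)$ to preserve the generalized eigenspaces of $|\mathrm{eigenvalue}|$ of $\alpha(x)$, i.e. the Lyapunov spaces $V_i(x)$. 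This is the standard "commuting matrices preserve each other's eigenspaces" argument, softened by the nilpotency so that only the moduli of eigenvalues need agree. Part (3) then follows: on the common refinement of the Lyapunov decompositions for $x$ and $y$, the exponent of $\alpha(xy)$ on a block is $\log$ of the modulus of the product of the relevant eigenvalues, which is additive; the quasi-unipotent corrections contribute only polynomially and hence do not affect Lyapunov exponents.

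Finally, for (4), the upper bound $\|s_1^{k_1}\cdots s_d^{k_d}(\prod_{i=1}^n s_{j_i})^{-1}\| \le C_S\, p(n)$ is exactly what Lemma \ref{le:6} delivers: the difference word $s_1^{k_1}\cdots s_d^{k_d}(\prod s_{j_i})^{-1}$ equals the "defect" $d$ (a product of iterated commutators of word length $\le p(n)$), which lies in $[\mathcal{H},\mathcal{H}] \subset \mathcal{H}_{\mathfrak{n}-1}$-generated subgroup whose elements, by part (1) and Lemma \ref{le:11} applied to the finite set $S'$, have norm growing only polynomially in the word length — so $\|d\| \le C_S\, \tilde p(p(n))$, still polynomial in $n$. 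The lower bound is immediate from the upper bound applied to the inverse: $\|F\|_{\mathrm{min}} = \|F^{-1}\|^{-1}$ by item \eqref{for:12}, and $F^{-1} = \prod s_{j_i}\,(s_1^{k_1}\cdots s_d^{k_d})^{-1}$ is again a defect word (reordering a product the other way), so $\|F^{-1}\| \le C_S\, p(n)$ gives $\|F\|_{\mathrm{min}} \ge C_S'\, p(n)^{-1}$. I would close by remarking that absorbing all the nested polynomials into a single polynomial $p$ and all constants into $C_S, C_S'$ costs nothing, completing the proof.
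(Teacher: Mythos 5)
Your route to part~(1) has a genuine gap, and you in fact flag it yourself. You want to invoke Lemma~\ref{le:11} to get a polynomial bound on $\norm{\alpha(z)^k}$ and conclude unit-circle spectrum, but the hypothesis of Lemma~\ref{le:11} is that \emph{every} element of the nilpotent subgroup under consideration already has all eigenvalues of absolute value~$1$ --- which is precisely the conclusion you are trying to establish. Your proposed repair (``commutators of matrices with a common invariant flag refinement are unipotent'') does not close the circle: the existence of a simultaneous invariant flag refinement for all of $\mathcal{H}$ is essentially part~(2) of the proposition, which is not available yet, and even granting such a block form it is false in general that commutators of the blocks are unipotent unless the diagonal blocks commute scalar-wise. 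The paper avoids this entirely by a simultaneous induction on the lower central series in which the key mechanism for part~(1) never uses Lemma~\ref{le:11}: for $z=D_1(z_1,z_2)$ one shows $z_1z_2^n = z_2^n\,z_1 D_1(z_1,z_2)^n f_n$ with $f_n$ in a lower term of the series; by the inductive hypothesis (parts~(2)--(3) one level down) the Lyapunov exponents of $z_1 D_1(z_1,z_2)^n f_n$ have the form $\lambda + n\mu$, while the identity exhibits this element as $z_2^{-n}z_1z_2^n$, a conjugate of $z_1$, so those exponents are independent of~$n$. Varying~$n$ forces $\mu=0$. This conjugation-versus-additivity argument is the missing idea in your sketch. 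Lemmas~\ref{le:6} and~\ref{le:11} only enter afterwards --- to get the polynomial control on the ``defect'' $e_k$ in $(xy)^k = e_k x^k y^k$ for part~(3), and for part~(4) --- at which point their hypothesis is already verified by part~(1).

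Your parts~(2)--(3) are stated at the right level of generality but remain hand-waving where the paper is precise: the actual mechanism is $y^{-1}xy = xD_1(x,y)$, so once $xD_1(x,y)$ is shown (via the already-proved (3) for a lower commutator level) to share Lyapunov spaces with $x$, the conjugate $y^{-1}xy$ having the same Lyapunov spaces as $x$ forces $y$ to preserve them; ``commuting up to quasi-unipotent correction'' does not by itself yield this. Your part~(4) is correct and matches the paper: it is exactly the combination of part~(1) with Lemmas~\ref{le:6} and~\ref{le:11}, together with $\norm{F}_{\text{min}}=\norm{F^{-1}}^{-1}$ for the lower bound.
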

\begin{proof}
In this part  we identify $\alpha(x)$ and $x$ for any $x\in\mathcal{H}$ if there is no confusion. We just need to prove the first three statements. \eqref{for:115} follows from \eqref{for:43}, Lemma \ref{le:6} and \ref{le:11} immediately.

Obviously, \eqref{for:43} holds for any $z\in\mathcal{H}_0$; and \eqref{for:49} and \eqref{for:50} hold if $D_1(x,y)\in \mathcal{H}_0$ (see \eqref{for:103} of Section \ref{sec:1}).

Suppose \eqref{for:43} holds for any $z\in\mathcal{H}_i$; and \eqref{for:49} and \eqref{for:50} hold for any $x,\,y$ if $D_1(x,y)\in \mathcal{H}_{i}$, $i< \mathfrak{n}-1.$ Next, firstly we will show that \eqref{for:43} holds for any $z\in\mathcal{H}_{i+1}$. Suppose $z=D_1(z_1,z_2)$ for some $z_1,\,z_2\in\mathcal{H}$.
Inductively we can show that for any $n\in\ZZ$
\begin{align*}
 z_1z_2^n=z_2^nz_1D_1(z_1,z_2)^nf_n,\qquad f_n\in \mathcal{H}_{i}.
\end{align*}
Then by assumption there exists a a full measure set $\Gamma_{z_1,z_2,z}$ such that the Lyapunov exponents of $z_1D_1(z_1,z_2)^nf_n$ are of the form $\lambda+n\mu$ where $\lambda$ and $\mu$ are corresponding Lyapunov exponents of $z_1$ and $D_1(z_1,z_2)$ since all Lyapunov exponents of $f_n$ are $0$ by assumption. The fact that  $z_1D_1(z_1,z_2)^nf_n$ are conjugated with $z_1$ for all $n$ means there exists a a full measure set $\Gamma_{z_1,z_2,z}'$ such that all Lyapunov exponents of $D_1(z_1,z_2)=z$ are $0$. Since $\mathcal{H}_{i+1}$ is generated by such $z$ who is a commutator of a pair of elements in $\mathcal{H}$, then by assumption it follows that \eqref{for:43} holds for any $z\in\mathcal{H}_{i+1}$.

Finally, we will show that \eqref{for:49} and \eqref{for:50} hold for any $x,\,y$ if $D_1(x,y)\in \mathcal{H}_{i+1}$.
Note that $D_1(x,D_1(x,y))\in\mathcal{H}_i$, by assumption the Lyapunov exponents of $xD_1(x,y)$ are the sum of corresponding Lyapunov exponents of $x$ and $D_1(x,y)$. As we just showed that all Lyapunov exponents of $D_1(x,y)$ are $0$, then we see that $xD_1(x,y)$ and $x$ have the exactly the same Lyapunov spaces. Hence the relation $xy=yxD_1(x,y)$ implies that $y$ preserves each Lyapunov space of $x$. Also, relation $yx=xyD_1(x,y)^{-1}$ implies that $x$ preserves each Lyapunov space of $y$. Then we proved \eqref{for:49} in the case of $D_1(x,y)\in \mathcal{H}_{i+1}$.

Since $D_1(x,y)\in \mathcal{H}_{i+1}$, $S'=\{[x,\cdots[y,x]\cdots]: \text{ for all lenth}\}$ is in $\mathcal{H}_{i+1}$. By Lemma \ref{le:6} we can write
\begin{align}\label{for:51}
(xy)^{k}=e_kx^{k}y^{k}
\end{align}
where $e_k$ is a product of elements in $S'$ with word length bounded by $p(k)$, where $p$ is a polynomial determined by $x,\,y$. Since all elements in $\mathcal{H}_{i+1}$ are with all Lyapunov exponents $0$ as we proved, by  Lemma \ref{le:11} we get
\begin{align}\label{for:114}
 \norm{e_k}\leq p_1(\abs{k}),\qquad \forall k\in\ZZ.
\end{align}
For $y$ we have a decomposition:
\begin{align}\label{for:38}
  \RR^N=\bigoplus_{i\in I} J_{\mu_i}
\end{align}
where $J_{\mu_i}$ is the Lyapunov space of $y$ with Lyapunov exponent $\mu_i$.

Since $x$ preserves Lyapunov spaces $y$, each $J_{\mu_i}$ is $x$-invariant. Then we have a decomposition for each $J_{\mu_i}$:
\begin{align*}
  J_{\mu_i}=\bigoplus_{j\in J_i} J_{\lambda_{j(i)},\mu_{i}}
\end{align*}
such that each $J_{\lambda_{j(i)},\mu_{i}}$ is a Lyapunov space of $x$ on $J_{\mu_i}$ with Lyapunov exponent $\lambda_{j(i)}$.

Using \eqref{for:51}, \eqref{for:114} and \eqref{for:39} for any $v\in J_{\lambda_{j(i)},\mu_{i}}$ we have
\begin{align*}
 \norm{(xy)^{k}v}&=\norm{e_kx^{k}y^{k}v}\leq C_{x,y}p_1(k)e^{(\lambda_{j(i)}+\mu_i)k}(\abs{k}+1)^{N}\norm{v}
 \end{align*}
for any $k>0$.

It follows that
\begin{align*}
  \lim_{k\rightarrow+\infty }k^{-1}\log \norm{(xy)^{k}\mid_{J_{\lambda_{j(i)},\mu_{i}}}}\leq \lambda_{j(i)}+\mu_i
\end{align*}
On the other hand, applying similar reasoning we can show
\begin{align*}
  \lim_{k\rightarrow+\infty }k^{-1}\log \norm{(xy)^{-k}\mid_{J_{\lambda_{j(i)},\mu_{i}}}}\leq -\lambda_{j(i)}-\mu_i,
\end{align*}
which implies:
\begin{align*}
  &\lim_{k\rightarrow+\infty }k^{-1}\log \norm{(xy)^{k}\mid_{J_{\lambda_{j(i)},\mu_{i}}}}_{\text{min}}\\
  &=\lim_{k\rightarrow+\infty }(-k)^{-1}\log \norm{(xy)^{-k}\mid_{J_{\lambda_{j(i)},\mu_{i}}}}\geq\lambda_{j(i)}+\mu_i.
\end{align*}
This shows that the Lyapunov exponent of $xy$ on $J_{\lambda_{j(i)},\mu_{i}}$ is $\lambda_{j(i)}+\mu_i$. Then we proved \eqref{for:50} in case of $D_1(x,y)\in \mathcal{H}_{i+1}$.

\end{proof}
\begin{remark}\label{re:3}
It is a result of Kronecker \cite{Kronecker} which
states that an integer matrix with all eigenvalues on the unit circle has to have
all eigenvalues roots of unity. Then \eqref{for:43} implies all elements in $\alpha([\mathcal{H},\mathcal{H}])$ are not ergodic.
\end{remark}
Then next two corollaries are simple, but will be frequently used in the subsequent part of this section.
\begin{corollary}\label{cor:1}
\begin{enumerate}
  \item \label{for:54}For any $y\in \mathcal{H}$ and any Lyapunov space $V$ of $A$ (see \eqref{for:44} of Section \ref{sec:1}) we have
\begin{align*}
 \norm{A^{n}\bar{y}A^{-n}\mid_V}&\leq C(\abs{n}+1)^{2N}\norm{\bar{y}\mid_V},\quad \forall n\in\ZZ\backslash0.
 \end{align*}
 \item \label{for:106} for any $n\in\ZZ\backslash0$ and $1\leq i\leq \mathfrak{n}-1$,
 \begin{align*}
\norm{D_i(A^n,\bar{y})^{\pm 1}}\leq C\norm{\bar{y}^{\pm 1}}^{2^i}\abs{n}^{2^{i+1}N}.
 \end{align*}
 \end{enumerate}

\end{corollary}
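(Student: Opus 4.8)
The plan is to establish \eqref{for:54} first and then bootstrap it to \eqref{for:106} by induction on $i$. For \eqref{for:54}, note that since $\bar y=\alpha(y)$ with $y\in\mathcal{H}$, Proposition \ref{po:1}\eqref{for:49} guarantees that $\bar y$ preserves every Lyapunov space $V$ of $A$; as $V$ is also $A$-invariant, the operator $A^{n}\bar yA^{-n}$ maps $V$ into itself and restricts there to $(A^{n}\!\mid_V)(\bar y\!\mid_V)(A^{-n}\!\mid_V)$. On $V$ all eigenvalues of $A$ share a common absolute value $e^{\lambda}$, so decomposing $V$ into Jordan blocks of $A$ (each of size $\le N$) and applying \eqref{for:39} yields $\norm{A^{k}\!\mid_V}\le C(\abs{k}+1)^{N}e^{\lambda k}$ for every $k\in\ZZ$. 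Taking $k=n$ and $k=-n$ and multiplying, the exponential factors cancel, giving $\norm{A^{n}\!\mid_V}\cdot\norm{A^{-n}\!\mid_V}\le C(\abs{n}+1)^{2N}$, and hence $\norm{A^{n}\bar yA^{-n}\!\mid_V}\le C(\abs{n}+1)^{2N}\norm{\bar y\!\mid_V}$.

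For \eqref{for:106} I would first upgrade \eqref{for:54} to full operator norms: if $g=\alpha(h)$ with $h\in\mathcal{H}$, then by Proposition \ref{po:1}\eqref{for:49} $g$ is block diagonal with respect to the Lyapunov decomposition $\RR^{N}=\bigoplus_{k}V_{k}$ of $A$, so $A^{\pm n}gA^{\mp n}$ is block diagonal as well and \eqref{for:54} gives $\norm{A^{\pm n}gA^{\mp n}}\le C(\abs{n}+1)^{2N}\norm{g}$, the constant absorbing the comparison between the block norm and the operator norm. Now set $M=\norm{\bar y^{\pm1}}$ and prove by induction on $i$ the sharper bound
\begin{align*}
\norm{D_i(A^n,\bar y)^{\pm1}}\le C(\abs{n}+1)^{(2^{i+1}-2)N}M^{2^i}.
\end{align*}
For $i=1$ write $D_1(A^n,\bar y)=\bigl(A^{-n}\bar y^{-1}A^{n}\bigr)\bar y$; the upgraded estimate with $g=\bar y^{-1}$ gives exponent $2N=(2^{2}-2)N$ and $M$-power $2=2^{1}$, and $D_1(A^n,\bar y)^{-1}=\bar y^{-1}A^{-n}\bar yA^{n}$ is handled the same way. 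For the inductive step, write $D_{i+1}(A^n,\bar y)=\bigl(A^{-n}D_i(A^n,\bar y)^{-1}A^{n}\bigr)D_i(A^n,\bar y)$; since $D_i(A^n,\bar y)=\alpha(h_i)$ for some $h_i\in\mathcal{H}$, the upgraded \eqref{for:54} bounds the conjugated factor by $C(\abs{n}+1)^{2N}\norm{D_i(A^n,\bar y)^{-1}}$, and multiplying by the inductive bound for $\norm{D_i(A^n,\bar y)}$ produces exponent $2N+(2^{i+1}-2)N+(2^{i+1}-2)N=(2^{i+2}-2)N$ and $M$-power $2\cdot2^{i}=2^{i+1}$, as needed; the inverse $D_{i+1}(A^n,\bar y)^{-1}=D_i(A^n,\bar y)^{-1}\bigl(A^{-n}D_i(A^n,\bar y)A^{n}\bigr)$ is identical. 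Finally, since $\abs{n}\ge1$ we have $(\abs{n}+1)^{(2^{i+1}-2)N}\le C\abs{n}^{2^{i+1}N}$, which is the claimed estimate.

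The one point that requires care is the bookkeeping of the polynomial degree in the induction. The naive recursion for the $(\abs{n}+1)$-exponent is $a_{i+1}=2+2a_i$ with $a_1=2$, and the content of the argument is that its solution $a_i=2^{i+1}-2$ stays strictly below the target exponent $2^{i+1}$, so the doubling caused by the two occurrences of $D_i$ inside $D_{i+1}$ never overruns the allotted growth rate. Everything else is a routine application of \eqref{for:39} together with the invariance of Lyapunov spaces supplied by Proposition \ref{po:1}.
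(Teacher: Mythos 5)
Your proposal is correct and follows essentially the same route as the paper. For part \eqref{for:54} the paper factors $A=xJ$ on each Lyapunov space (semisimple times unipotent), bounds conjugation by $x$ by a constant, and uses \eqref{for:39} to get $\norm{J^n}^2\leq C\abs{n}^{2N}$; your version bounds $\norm{A^n\!\mid_V}\cdot\norm{A^{-n}\!\mid_V}$ directly via \eqref{for:39}, letting the exponential factors cancel, which is the same calculation packaged without the explicit splitting. For part \eqref{for:106} the paper only says it is ``a direct consequence,'' and your induction, with the recursion $a_{i+1}=2+2a_i$, $a_1=2$, giving $a_i=2^{i+1}-2\leq2^{i+1}$, is exactly the bookkeeping the paper leaves implicit; the only subtlety worth flagging is that you use the upgraded full-operator-norm version of \eqref{for:54}, which is legitimate because Proposition \ref{po:1}\eqref{for:49} makes every $\alpha(h)$ (and hence every $D_i(A^n,\bar y)=\alpha(D_i(g_1^n,y))$) block diagonal in the Lyapunov splitting of $A$, so the comparison constant between blockwise and global norms depends only on $A$.
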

\begin{proof}
On any Lyapunov space $V$, choose a basis in which $A$ has its Jordan normal form $xJ$, where $x$ is diagonal and $J$ is unipotent. It is clear that
\begin{align*}
  \norm{x^{n}\bar{y}x^{-n}\mid_V}\leq C_A\norm{\bar{y}\mid_V},\qquad \forall n\in\ZZ.
\end{align*}
By using \eqref{for:39} for any $n\in\ZZ\backslash0$ we have
\begin{align*}
 \norm{A^{n}\bar{y}A^{-n}\mid_V}&\leq C_A\norm{\bar{y}\mid_V}\cdot\norm{J^n}^2\leq C_A\abs{n}^{2N}\norm{\bar{y}\mid_V}.
 \end{align*}
 \eqref{for:106} is a direct consequence of \eqref{for:54}.
\end{proof}
For any $y\in \mathcal{H}$ and $x\in [\mathcal{H},\mathcal{H}]$, if $\bar{y}$ is Anosov then $\overline{xy}$ is Anosov, which is a direct consequence of Proposition \ref{po:1}. If $\bar{y}$ is ergodic, the next result shows that $\overline{xy}$ is also ergodic.
\begin{corollary}\label{cor:5}

For any $y\in \mathcal{H}$ and $x\in [\mathcal{H},\mathcal{H}]$, if $\bar{y}$ is ergodic then $\overline{xy}$ is also ergodic.
\end{corollary}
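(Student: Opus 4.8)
The plan is to use the classical dictionary for toral automorphisms: $A\in GL(N,\ZZ)$ is ergodic for Haar measure precisely when no eigenvalue of $A$ is a root of unity, equivalently when the dual action of $A$ on $\ZZ^N\setminus\{0\}$ has no finite orbit. I work throughout with the dual action, which is again a nilpotent action by toral automorphisms for which $\bar y$ is still ergodic (keeping the paper's convention of denoting a map and its dual by the same symbol), so that Proposition \ref{po:1} applies to it verbatim. First I would record the hyperbolic structure shared by $\bar y$ and $\overline{xy}$. Since $x\in[\mathcal{H},\mathcal{H}]=\mathcal{H}_{\mathfrak n-1}$, Proposition \ref{po:1}\eqref{for:43} gives that every Lyapunov exponent of $\bar x$ is $0$, whence by Remark \ref{re:3} all eigenvalues of $\bar x$ are roots of unity and, by \eqref{for:39}, $\norm{\bar x^{n}}\le C(\abs n+1)^{N}$ for every $n\in\ZZ$. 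By Proposition \ref{po:1}\eqref{for:49} the map $\bar x$ preserves the Lyapunov spaces of $\bar y$, so $\overline{xy}=\bar x\,\bar y$ preserves the splitting $\RR^{N}=V_1(\bar y)\oplus V_2(\bar y)\oplus V_3(\bar y)$ into the expanding, neutral and contracting subspaces of $\bar y$. Moreover, since $\bar x$ contributes only zero exponents, Proposition \ref{po:1}\eqref{for:50} and its proof give that $\overline{xy}$ and $\bar y$ have the same Lyapunov decomposition; in particular $\overline{xy}$ expands on $V_1(\bar y)$ and contracts on $V_3(\bar y)$ (at definite exponential rates, by \eqref{for:4} for $\bar y$ together with the polynomial bound on $\bar x$), while all eigenvalues of $\overline{xy}\!\mid_{V_2(\bar y)}$ lie on the unit circle, so $\norm{\overline{(xy)^{n}}\!\mid_{V_2(\bar y)}}\le C(\abs n+1)^{N}$ for all $n\in\ZZ$.

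Next I would prove that the neutral subspace of $\bar y$ carries no nonzero lattice vector, i.e. $V_2(\bar y)\cap\ZZ^{N}=\{0\}$. If $0\ne w\in V_2(\bar y)\cap\ZZ^{N}$, let $W$ be the $\QQ$-span of $\{\bar y^{n}w:n\in\ZZ\}$: it is a nonzero $\bar y$-invariant rational subspace contained in $V_2(\bar y)$, the lattice $W\cap\ZZ^{N}$ is $\bar y$-invariant, and in an integral basis of it $\bar y\!\mid_W$ is an integer matrix whose eigenvalues, being eigenvalues of $\bar y$ inside $V_2(\bar y)$, all have modulus $1$. By Kronecker's theorem (Remark \ref{re:3}) they are then roots of unity, contradicting ergodicity of $\bar y$.

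Finally I would argue by contradiction. If $\overline{xy}$ is not ergodic, some eigenvalue of $\overline{xy}$ is a root of unity, so $\overline{(xy)^{m}}$ has eigenvalue $1$ for some $m\ge1$; since $\overline{(xy)^{m}}-I\in M_N(\ZZ)$ is singular, there is $0\ne v\in\ZZ^{N}$ with $\overline{(xy)^{m}}v=v$, so the $\overline{xy}$-orbit of $v$ is finite, hence bounded. Decomposing $v=v_1+v_2+v_3$ along $V_1(\bar y)\oplus V_2(\bar y)\oplus V_3(\bar y)$ and using the first step, $\norm{\overline{(xy)^{n}}v_1}\to\infty$ exponentially as $n\to+\infty$ while $\norm{\overline{(xy)^{n}}v_2}$ and $\norm{\overline{(xy)^{n}}v_3}$ grow at most polynomially there, so boundedness of the orbit forces $v_1=0$; symmetrically, letting $n\to-\infty$ forces $v_3=0$. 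Thus $v=v_2$ is a nonzero lattice vector in $V_2(\bar y)$, contradicting the second step. Hence $\overline{xy}$ is ergodic.

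I expect the only genuine obstacle to be conceptual rather than computational: ergodicity is controlled by the absence of root-of-unity eigenvalues, not by nonvanishing of Lyapunov exponents — an ergodic toral automorphism may well have a nontrivial neutral subspace — so one cannot finish by a Lyapunov-exponent count, which is exactly why the Anosov analogue mentioned just before the statement is immediate. The replacement is the combination of Kronecker's theorem with the fact that the neutral subspace of an ergodic integer matrix contains no nonzero lattice point, together with the observation, extracted from Proposition \ref{po:1}, that any $\overline{xy}$-periodic lattice vector is forced into that neutral subspace.
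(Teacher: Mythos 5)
Your argument is correct and follows the paper's approach: assume $\overline{xy}$ fixes a nonzero lattice vector $v$ and derive a contradiction from the exponential growth of the $\overline{xy}$-orbit of $v$ in the hyperbolic directions of $\bar y$, which coincide with those of $\overline{xy}$ by Proposition~\ref{po:1}. The only difference is that the paper invokes Katznelson's bound (Remark~\ref{re:1}) to get $\pi_1(v)\neq 0$ and a one-sided contradiction, while you force $v\in V_2(\bar y)$ by a two-sided argument and then rule that out via Kronecker's theorem --- a minor rearrangement of the same idea.
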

\begin{proof}
If $xy$ is not ergodic, there exists a vector $v\neq 0$ such that $(xy)^{m}v=v$, $m>0$. For any $0\neq u\in J_{0,\mu_{i}}$ (all Lyapunov exponents of $x$ are $0$ by Proposition of \ref{po:1}), using \eqref{for:51} and \eqref{for:115} of Proposition of \ref{po:1} we have
\begin{align*}
 \norm{(xy)^{k}u}&=\norm{e_kx^{k}y^{k}u}\geq C_{x,y}e^{\mu_ik}(k+1)^{-N}p(k)^{-1}\norm{u}
\end{align*}
for any $k\in\NN$ and a polynomial $p$.  It follows that
\begin{align*}
  \lim_{k\rightarrow\infty}k^{-1}\log\norm{(xy)^{k}u}\geq\mu_i.
\end{align*}
Since $\pi_1(v)\neq 0$ by ergodicity of $\bar{y}$ we have
\begin{align*}
  \lim_{k\rightarrow\infty}k^{-1}\log\norm{(xy)^{km}\pi_1(v)}>0.
\end{align*}
On the other hand, since
\begin{align*}
 k^{-1}\log\norm{v} =k^{-1}\log\norm{(xy)^{km}v}\geq k^{-1}\log C\norm{(xy)^{km}\pi_1(v)},
\end{align*}
where $C$ is a constant only dependent on $x$ and $y$, we get a contradiction
\begin{align*}
  0=\lim_{k\rightarrow\infty}k^{-1}\log\norm{v}=\lim_{k\rightarrow\infty}k^{-1}\log\norm{(xy)^{km}\pi_1(v)}>0.
\end{align*}
Hence we get the conclusion.
\end{proof}

At the end of this section, we make a slight digression to prove following results, whose role will be clear in Section \ref{sec:2}. Above corollary shows that as $n$ increases, the norm of $D_i(A^n,\bar{y})$ increases polynomially, while that of $A^n$ increase or deceases exponentially along hyperbolic directions. Then by increasing $n$, we can let the set $\{\mathcal{M}_{A^n}(D_i(A^n,\bar{y})v):v\in E_{A^n}\}$ (see \eqref{for:105} of Section \ref{sec:1}) be at most one step (future or past) away from $E_{A^n}$.
\begin{lemma}\label{le:9}
For any $c>0$ there exists $N_1(c)\in\NN$ such that for any $n\geq N_1$ and any $x\in\mathcal{H}$ with $\abs{\bar{x}}\leq c$, if $v=\mathcal{M}_{A^n}(v)$ then
\begin{align*}
  \mathcal{M}_{A^n}\big(D_i(A^n,\bar{x})v\big)=A^{nj}D_i(A^n,\bar{x})v,\qquad j=0, \pm 1
\end{align*}
for any $1\leq i\leq \mathfrak{n}$.
\end{lemma}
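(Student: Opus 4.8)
The plan is to carry out the comparison indicated in the paragraph preceding the lemma. Write $D=D_i(A^n,\bar x)$ and $w=Dv$. The mechanism is that $D$ distorts the three coordinates $\pi_1,\pi_2,\pi_3$ of a vector by at most a factor polynomial in $n$, whereas a single step of the $A^n$-orbit multiplies $\pi_1$ (resp.\ $\pi_3$) by a factor that is exponentially large (resp.\ small) in $n$; so once $n$ is large the perturbation is cheaper than one step and cannot move the minimal point by more than one.

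First I would record two facts about $D$. Since $A^n,\bar x\in\{\bar g:g\in\mathcal{H}\}$, also $D$ and every conjugate $A^{nk}DA^{-nk}$ lie in that group, and an easy induction on $i$ (using $A^{nk}A^nA^{-nk}=A^n$) gives the identity $A^{nk}D_i(A^n,\bar x)A^{-nk}=D_i\!\big(A^n,A^{nk}\bar xA^{-nk}\big)$. Hence by Proposition \ref{po:1}\eqref{for:49} each of $D,\bar x$ and $A^{nk}DA^{-nk}$ preserves $V_1(A),V_2(A),V_3(A)$, i.e.\ is block diagonal for $\RR^N=V_1(A)\oplus V_2(A)\oplus V_3(A)$; and combining Corollary \ref{cor:1}\eqref{for:54} (applied to $\bar x$ on each Lyapunov space of $A$, using $\norm{\bar x^{\pm1}}\leq Cc^N$) with Corollary \ref{cor:1}\eqref{for:106} produces a polynomial $q=q(\alpha,N,c)$ with $\norm{(A^{nk}DA^{-nk})^{\pm1}}\leq q(n)$ for all $\abs k\leq2$ and $1\leq i\leq\mathfrak{n}$. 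Writing $A^{nk}w=(A^{nk}DA^{-nk})\,A^{nk}v$ and using block diagonality, one gets
\begin{align*}
 q(n)^{-1}\,\norm{\pi_j(A^{nk}v)}\ \leq\ \norm{\pi_j(A^{nk}w)}\ \leq\ q(n)\,\norm{\pi_j(A^{nk}v)},\qquad j\in\{1,2,3\},\ \abs k\leq2,
\end{align*}
so at each of the steps $k=-1,0,1,2$ the orbits of $w$ and $v$ have the same coordinatewise sizes up to the factor $q(n)$.

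Now $v=\mathcal{M}_{A^n}(v)$ means exactly $v\hookrightarrow3(A^n)$ and $A^nv\rightarrow1,2(A^n)$. Using the growth estimates \eqref{for:4} to pass from step $0$ to step $-1$ for $v$, the contracting coordinate $\norm{\pi_3(A^{-n}v)}$ exceeds $\max\{\norm{\pi_1(A^{-n}v)},\norm{\pi_2(A^{-n}v)}\}$ by a factor bounded below by $C^{-1}\rho^{n}n^{-N}$; similarly, passing from step $1$ (where $A^nv\rightarrow1,2(A^n)$ is strict) to step $2$, the quantity $\max\{\norm{\pi_1(A^{2n}v)},\norm{\pi_2(A^{2n}v)}\}$ exceeds $\norm{\pi_3(A^{2n}v)}$ by such a factor. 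Feeding both estimates through the comparison above and choosing $N_1=N_1(c)$ large enough that $C^{-1}\rho^{n}n^{-N}$ beats $q(n)^2$ for all $n\geq N_1$, we obtain $A^{-n}w\hookrightarrow3(A^n)$ and $A^{2n}w\rightarrow1,2(A^n)$. Enlarging $N_1$ once more, again via \eqref{for:4} (a polynomial beaten by $\rho^n$), so that one application of $A^{-n}$ never takes the orbit of $w$ out of the regime where $\pi_3$ dominates, the set $S=\{k\in\ZZ:A^{nk}w\hookrightarrow3(A^n)\}$ has the form $(-\infty,k_0]$; since $-1\in S$ while $2\notin S$, we conclude $k_0\in\{-1,0,1\}$, and $k_0$ is precisely the step of the minimal point of $w$. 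Taking $N_1(c)$ to be the largest of the finitely many thresholds as $i$ runs over $1\leq i\leq\mathfrak{n}$ finishes the argument.

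The step I expect to need the most care is the block diagonality of $D_i(A^n,\bar x)$ and of its $A$-conjugates: it is precisely this that prevents the perturbation from transferring mass between the expanding, neutral and contracting subspaces, so that each coordinate is only rescaled by a polynomial amount — after which the exponential behaviour of $A^n$ along the hyperbolic directions does the rest. Everything else is the comparison of a fixed polynomial in $n$ with $\rho^{\,n}$.
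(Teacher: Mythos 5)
Your proof is correct and follows essentially the same route as the paper's: both arguments exploit that $D_i(A^n,\bar x)$ preserves the splitting $V_1(A)\oplus V_2(A)\oplus V_3(A)$ (via Proposition \ref{po:1}) and has norm polynomially bounded in $n$ (via Corollary \ref{cor:1}), so that the exponential contraction/expansion of $A^{\pm n}$ along the hyperbolic directions overwhelms the distortion once $n$ is large. The paper organizes the computation by splitting into the two cases $\norm{\pi_3(D v)}\gtrless\norm{\pi_{1,2}(Dv)}$ and bounding the relevant ratios at steps $2$ and $-1$ respectively, whereas you compare the whole orbit segment of $w=Dv$ to that of $v$ via the two-sided bound $q(n)^{-1}\norm{\pi_j(A^{nk}v)}\leq\norm{\pi_j(A^{nk}w)}\leq q(n)\norm{\pi_j(A^{nk}v)}$ and then invoke the half-line structure of $\{k:A^{nk}w\hookrightarrow3(A^n)\}$; these are the same estimates assembled in a slightly different order, and your version makes explicit the monotonicity of the dominance regime that the paper uses only implicitly.
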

\begin{proof}

To simply notion we use $d_n(i)$ to denote $D_i(A^n,\bar{x})$. If
\begin{align*}
 \norm{\pi_3(d_n(i)v)}\geq \norm{\pi_{1,2}(d_n(i)v)},
\end{align*}
(see \eqref{for:111} of Section \ref{sec:1}), for any $n>0$ we have
\begin{align}\label{for:58}
&\frac{\abs{\pi_{1,2}(A^{2n}d_n(i)v)}}{\abs{\pi_3(A^{2n}d_n(i)v)}}\stackrel{(1)}{=} \frac{\abs{A^{2n}d_n(i)A^{-n}\pi_{1,2}(A^nv)}}{\abs{A^{2n}d_n(i)A^{-n}\pi_3(A^nv)}}\notag\\
&\geq\frac{\abs{A^{2n}d_n(i)A^{-n}\mid_{V_{1,2}A}}_{\text{min}}\cdot\abs{\pi_{1,2}(A^nv)}}{\norm{A^{2n}d_n(i)A^{-n}\mid_{V_{3}A}}\cdot
\abs{\pi_3(A^nv)}}\notag\\
&\geq\frac{\abs{A^{n}\mid_{V_{1,2}A}}_{\text{min}}\cdot\abs{A^{n}d_n(i)A^{-n}\mid_{V_{1,2}A}}_{\text{min}}
\cdot\abs{\pi_{1,2}(A^nv)}}{\abs{A^{n}\mid_{V_{3}A}}\cdot\abs{A^{n}d_n(i)A^{-n}\mid_{V_{3}A}}\cdot
\abs{\pi_3(A^nv)}}\notag\\
&\stackrel{(2)}{\geq}\frac{C\abs{n}^{-2N}\abs{A^{n}\mid_{V_{1,2}(A)}}_{\text{min}}
\cdot\norm{d_n(i)}_{\text{min}}\cdot\abs{\pi_{1,2}(A^nv)}}{\abs{n}^{2N}\abs{A^{n}\mid_{V_3(A)}}\cdot\norm{d_n(i)}\cdot
\abs{\pi_3(A^nv)}}\notag\\
&\stackrel{(3)}{\geq}\frac{C_c\abs{n}^{-(3+2^{\mathfrak{n}+1})N}\cdot\abs{\pi_{1,2}(A^nv)}}{\abs{n}^{(3+2^{\mathfrak{n}+1})N}
\rho^{-n}\abs{\pi_3(A^nv)}}\notag\\
&\stackrel{(4)}{>} C_c\abs{n}^{-(6+2^{\mathfrak{n}+2})N}\rho^n>1
\end{align}
proving $n$ is big enough. Here $\rho$ is defined in \eqref{for:4} of Section \ref{sec:1}

Here $(1)$ is from the fact that $d_n(i)$ preserves Lyapunov spaces of $A$; in $(2)$ since
\begin{align*}
  \abs{A^{n}d_n(i)A^{-n}\mid_{V_{1,2}A}}_{\text{min}}=\abs{A^{n}d_n(i)^{-1}A^{-n}\mid_{V_{1,2}A}}^{-1}
\end{align*}
\eqref{for:54} of Corollary \ref{cor:1} shows:
\begin{align*}
  \abs{A^{n}d_n(i)A^{-n}\mid_{V_{1,2}A}}_{\text{min}}&\geq C\abs{n}^{-2N}\norm{d_n(i)^{-1}}^{-1}=C\abs{n}^{-2N}\norm{d_n(i)}_{\text{min}}\\
  &\geq C\abs{\bar{x}^{-1}}^{2^\mathfrak{n}}\abs{n}^{-(2^{i+1}+1)N}\geq C_c\abs{n}^{-(2^{i+1}+1)N}.
\end{align*}
Hence we get $(2)$; in $(3)$ we use the observation:
\begin{align*}
  \abs{A^{n}\mid_{V_{1,2}A}}_{\text{min}}\geq \abs{A^{n}\mid_{V_{2}A}}_{\text{min}}\geq C\norm{n}^{-N};
\end{align*}
in $(4)$ we use the fact $\frac{\abs{\pi_{1,2}(A^nv)}}{\abs{\pi_3(A^nv)}}>1$.

This implies that in this condition we get
\begin{align*}
  \mathcal{M}_{A^n}\big(D_i(A^n,\bar{x})v\big)=A^{nj}D_i(A^n,\bar{x})v,\qquad j=0, 1.
\end{align*}
If $\abs{\pi_3(d_n(i)v)}<\abs{\pi_{1,2}(d_n(i)v)}$, then
\begin{align}\label{for:59}
&\frac{\abs{\pi_{1,2}(A^{-n}d_n(i)v)}}{\abs{\pi_3(A^{-n}d_n(i)v)}}= \frac{\abs{A^{-n}d_n(i)\pi_{1,2}(v)}}{\abs{A^{-n}d_n(i)\pi_3(v)}}\notag\\
&\leq\frac{\norm{A^{-n}d_n(i)\mid_{V_{1,2}A}}\cdot\abs{\pi_{1,2}(v)}}{\norm{A^{-n}d_n(i)\mid_{V_{3}A}}_{\text{min}}
\cdot\abs{\pi_3(v)}}\notag\\
&\leq\frac{C_c\abs{n}^{2N}\abs{n}^{2^{\mathfrak{n}+1}N}\cdot\abs{\pi_{1,2}(v)}}{\abs{n}^{-2^{\mathfrak{n}+1}N}\rho^n\abs{ \pi_3(v)}}\notag\\
&\leq C_c\abs{n}^{(2^{\mathfrak{n}+2}+2)N}\rho^{-n}<1
\end{align}
providing $n$ is big enough.

In the last step we used $\frac{\abs{\pi_{1,2}(v)}}{\abs{\pi_3(v)}}\leq1$. Then in this condition we have
\begin{align*}
  \mathcal{M}_{A^n}\big(D_i(A^n,\bar{x})v\big)=A^{nj}D_i(A^n,\bar{x})v,\qquad j=-1.
\end{align*}
similarly, we obtain

Let  $N_1$ be the integer which satisfies the inequalities
\begin{align*}
  C_c\abs{N_1}^{(2^{\mathfrak{n}+2}+2)N}\rho^{-N_1}<1\quad\text{ and }\quad C_c\abs{N_1}^{-(6+2^{\mathfrak{n}+2})N}\rho^{N_1}>1.
\end{align*}
Then \eqref{for:58} and \eqref{for:59} show that $N_1$ what we need.

\end{proof}
For any $v\in\ZZ^N$ if $\frac{\pi_{1,2}(v)}{\pi_3(v)}>1$, then $v\notin E_{A^n}$ for any $n\in\ZZ\backslash0$; on the other hand,  conditions $\frac{\pi_{1,2}(v)}{\pi_3(v)}<1$ and $\frac{\pi_{1,2}(A^{n}v)}{\pi_3(A^{n}v)}<1$ should be satisfied at the same time to guarantee $v\notin E_{A^n}$ for any $n\in\NN$.
The next lemma lists several criterions to tell wether a vector is in $E_{A^n}$ or nor for big enough $n$.

\begin{lemma}\label{cor:3}For any $c>0$ there exists $N_2(c)\in\NN$ such that for any $n\geq N_2$, any $x\in\mathcal{H}$ with $\abs{\alpha(x)}\leq c$, if denote
$D_i(A^n,\bar{x})$ by $d_{n}(i)$ and set
 \begin{align*}
   A(n,l_1,l_2,l_3)_{i_1,i_2}^{j_1,j_2,j_3}= d_{n}(i_1)(A^nd_{n}(l_1))^{j_1}d_{n}(i_2)(A^nd_{n}(l_2))^{j_2}(A^nd_{n}(l_3))^{j_3},
 \end{align*}
where $1\leq i_1,i_2,l_1,l_2,l_3\leq \mathfrak{n}$ and $j_1,j_2,j_3\in\ZZ$, then:
\begin{enumerate}
  \item \label{for:42}for $v\in E_{A^n}$
  \begin{enumerate}
    \item [a)] if $j_1,j_2,j_3\geq 0$ (resp. $j_1,j_2,j_3\leq0$) satisfying $\sum_{i=1}^3\abs{j_i}\geq 2$, then
    \begin{align*}
      A(n,l_1,l_2,l_3)_{i_1,i_2}^{j_1,j_2,j_3}v\notin E_{A^n};
    \end{align*}

    \item [b)] if $j_1,j_2,j_3\geq 0$ (resp. $j_1,j_2,j_3\leq0$) and $\sum_{i=1}^3\abs{j_i}\geq 3$, and
    if
    \begin{align*}
     A^{nz}A(n,l_1,l_2,l_3)_{i_1,i_2}^{j_1,j_2,j_3}v\in E_{A^n}
    \end{align*}
    for some $z\in\ZZ$,  then $z\leq -2$ (resp. $z\geq2$).
  \end{enumerate}

      \smallskip
  \item \label{for:68}if $A^{nm}v\in E_{A^n}$ $m\geq 1$ (resp. $m\leq-2$), then:
  \begin{enumerate}
    \item [a)] $d_{n,i_1}(A^nd_{n,l_1})^{j_1}v\notin E_{A^n}$ if $j_1\leq-1$ (resp. $j_1\geq0$);

    \smallskip

    \item [b)] if $A^{nz}d_{n,i_1}(A^nd_{n,l_1})^{j_1}v\in E_{A^n}$ for some $z\in\ZZ$ where $j_1\leq-1$ (resp. $j_1\geq1$) then $z\geq1$ (resp. $z\leq-2$).
  \end{enumerate}

\end{enumerate}

\end{lemma}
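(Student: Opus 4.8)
The strategy mirrors Lemma \ref{le:9}, only now we must track a vector as it moves along an $A^n$-orbit under a \emph{product} of several factors of the form $d_n(i)$ and $A^nd_n(l)$. The key quantity to control is the ratio $\abs{\pi_{1,2}(\cdot)}/\abs{\pi_3(\cdot)}$ of the projections to the (expanding + neutral) versus contracting subspace of $A$. Each factor $A^nd_n(l)$ contributes an exponential gain $\rho^n$ in this ratio (up to the polynomial factors $\abs{n}^{CN}$ coming from Corollary \ref{cor:1}\eqref{for:54} and the word-length/norm estimates of Proposition \ref{po:1}\eqref{for:115}), while each $d_n(i)$ alone contributes only a polynomial factor $\abs{n}^{2^{i+1}N}$, which is harmless. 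So heuristically, after applying $A(n,l_1,l_2,l_3)_{i_1,i_2}^{j_1,j_2,j_3}$ with $j_1+j_2+j_3$ of definite sign, the vector has been pushed roughly $(\sum_i \abs{j_i})$ steps ``up'' its $A^n$-orbit, and the surplus of such steps beyond $1$ is exactly what forces the image out of $E_{A^n}$ or bounds how far back $A^{nz}$ must be applied to re-enter it.

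\textbf{Key steps.} First, I would set $N_2 \ge N_1(c')$ (with $c'$ an upper bound, polynomial in $c$, on all the $\abs{d_n(i)}^{\pm1}$ via Corollary \ref{cor:1}\eqref{for:106}) so that Lemma \ref{le:9} applies to each individual factor. Second, for part \eqref{for:42}a), given $v=\mathcal{M}_{A^n}(v)\in E_{A^n}$ — so $\abs{\pi_{1,2}(v)} < \abs{\pi_3(v)}$ but $\abs{\pi_{1,2}(A^n v)} > \abs{\pi_{1,2}(A^n v)}$, i.e. $v$ is ``substantially large in both $3(A)$ and, after one step, in $1,2(A)$'' — I would compute $\abs{\pi_{1,2}\big(A(\cdots)v\big)}/\abs{\pi_3\big(A(\cdots)v\big)}$ by the same telescoping estimate as in \eqref{for:58}: insert $A^{\pm n}$ between consecutive factors, use that every $d_n(i)$ preserves the Lyapunov spaces of $A$, bound the conjugated factors $A^{n}d_n(i)A^{-n}$ by $\abs{n}^{CN}$ in norm and $\abs{n}^{-CN}$ in $\norm{\cdot}_{\text{min}}$, and collect the $\rho^{n}$ from each genuine $A^n$ factor. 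With $\sum\abs{j_i}\ge 2$ one gets at least a factor $\rho^{2n}$ against a fixed polynomial power of $\abs{n}$, hence the ratio exceeds $1$ and also, crucially, stays $>1$ after applying $A^{-n}$ (which costs only one $\rho^{-n}$ and polynomial corrections) — this is what shows the image is not $\mathcal{M}_{A^n}$ of anything, i.e. not in $E_{A^n}$. Third, for \eqref{for:42}b), with $\sum\abs{j_i}\ge 3$ the same count gives a surplus of $\rho^{3n}$, so $A^{nz}A(\cdots)v$ still has ratio $>1$ for $z=0$ and $z=-1$; one then checks it is the lowest point of its orbit only once $z\le -2$, by comparing the ratio at $z$ and $z+1$ and using that each $A^{-n}$ step multiplies the ratio by a factor between $\rho^{-n}\abs{n}^{-CN}$ and $\rho^{-n}\abs{n}^{CN}$, so no two consecutive powers can both satisfy the ``minimal'' condition once $n$ is large. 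Fourth, part \eqref{for:68} is the same analysis run with the ``base point'' being $A^{nm}v\in E_{A^n}$ instead of $v$ itself: if $m\ge 1$ then $v$ already sits $m$ steps below a minimal point, so prepending a factor $(A^nd_n(l_1))^{j_1}$ with $j_1\le -1$ pushes it strictly further into $3(A)$ (ratio $<1$ even after any single $A^n$), giving a); and re-entry into $E_{A^n}$ then requires $z\ge 1$, giving b); the $m\le -2$ case is symmetric.

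\textbf{Main obstacle.} The real work is bookkeeping the polynomial factors: each conjugation $A^{n}d_n(i)A^{-n}$, each reordering via Lemma \ref{le:6}, and each norm bound from Lemma \ref{le:11}/Proposition \ref{po:1}\eqref{for:115} contributes a power of $\abs{n}$ whose exponent depends on $\mathfrak{n}$ and on the nesting depth $i$, and one must verify that the \emph{total} such exponent is bounded \emph{independently of the $j$'s} (it is, because there are at most $\sum\abs{j_i}+2$ factors but the surplus $\rho^{(\sum\abs{j_i}-1)n}$ dominates any polynomial in $n$ times the base ones — the point being that for each extra factor $A^n d_n(l)$ we pay a bounded polynomial price $\abs{n}^{CN}$ but gain $\rho^n$). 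Once one isolates a single threshold $N_2(c)$ making $\rho^{n}$ beat the relevant fixed polynomial in $\abs{n}$ — exactly as $N_1$ was chosen at the end of Lemma \ref{le:9} — all four assertions follow from the two basic inequalities \eqref{for:58}–\eqref{for:59} applied to the appropriate sub-products. No genuinely new idea beyond Lemma \ref{le:9} is needed; the subtlety is purely in making the estimates uniform in $j_1,j_2,j_3$ and in the choice of which $A^{\pm n}$ to insert where so that the ``$\ge 2$ versus $\ge 3$'' dichotomy in the hypotheses matches the ``$z=0$ vs.\ $z\le -2$'' dichotomy in the conclusions.
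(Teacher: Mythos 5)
Your overall strategy --- insert $A^{\pm n}$ between consecutive factors, use that the $d_n(i)$ preserve the Lyapunov spaces of $A$, bound the conjugated factors $A^n d_n(i)A^{-n}$ polynomially via Corollary \ref{cor:1}, collect a factor $\rho^n$ from each genuine $A^n$, and then beat a fixed polynomial in $n$ by the exponential once $n$ is large enough, uniformly in $j_1,j_2,j_3$ --- is exactly the paper's. The paper implements it by writing $A=x_1x_2$ with $x_1$ diagonal and $x_2$ unipotent, deriving $\norm{x_1^nyx_1^{-n}}\leq C_A\norm{y}$ for $y$ preserving Lyapunov spaces together with $C_c(\abs{n}+1)^{-(1+2^{\mathfrak{n}+1})\abs{j}N}\leq\norm{(x_2^nd_n(i))^j}\leq C_c(\abs{n}+1)^{(1+2^{\mathfrak{n}+1})\abs{j}N}$, and then estimating $\norm{A^kA(\cdots)A^m\mid_{J_{\mu_i}}}$ from above and below on each Lyapunov block; your ``telescoping'' is the same computation.

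Two details deserve correction. First, the opening move ``set $N_2\ge N_1(c')$ with $c'$ a polynomial-in-$c$ bound on $\abs{d_n(i)}^{\pm1}$'' is not possible as stated: Corollary \ref{cor:1}\eqref{for:106} gives $\norm{d_n(i)^{\pm1}}\leq C\,c^{2^i}\abs{n}^{2^{i+1}N}$, which grows with $n$, so no $n$-independent constant $c'$ bounds these norms, and one cannot feed the $d_n(i)$'s back into the hypothesis of Lemma \ref{le:9}. Fortunately your steps 2--4 do not actually rely on this; they reproduce the estimate of \eqref{for:58}--\eqref{for:59} from scratch rather than invoking Lemma \ref{le:9} per factor, so the defect is cosmetic. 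Second, the polynomial exponent in $\abs{n}$ is \emph{not} bounded independently of the $j$'s, as you first assert --- it grows linearly in $\sum\abs{j_i}$, exactly as in the paper's bound $(\abs{n}+1)^{-6-(2+2^{2\mathfrak{n}+2})\mathfrak{j}N}$ --- and your parenthetical gives the correct resolution: each unit increment of $\sum\abs{j_i}$ costs a fixed polynomial factor $(\abs{n}+1)^{CN}$ but gains $\rho^n$, so a single threshold $N_2$ at which $\rho^n$ exceeds $(\abs{n}+1)^{CN}$ handles all $j$'s at once. One further simplification: for part (1)(a) with $j_i\geq0$, showing $\abs{\pi_{1,2}(\cdot)}/\abs{\pi_3(\cdot)}>1$ already negates the condition $w\hookrightarrow 3(A^n)$ and hence $w\notin E_{A^n}$; the extra requirement that the ratio stay $>1$ after applying $A^{-n}$ is not needed there (it is the $j_i\leq0$ case that needs the two-sided check, where both the ratio and its $A^n$-forward image must be shown $<1$, as the paper does).
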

\begin{proof} For $A$ we have the following Lyapunov space decomposition:
\begin{align*}
  \RR^N=\bigoplus_{i\in I} J_{\mu_i}
\end{align*}
where $J_{\mu_i}$ is the Lyapunov space of $A$ with Lyapunov exponent $\mu_i$. We choose a basis of $\RR^N$ in which $A=x_1x_2$, where $x_1$ is diagonal and $x_2$ is unipotent, $x_1$ and $x_2$ commute, and the eigenvalues of $x_1$ coincides with that of $A$. For any matrix $y$ preserving Lyapunov space spaces of $A$ we have
\begin{align}\label{for:40}
 \norm{x_1^nyx_1^{-n}} \leq C_A\norm{y},\qquad \forall n\in\ZZ.
\end{align}
Noting $x_2$ is unipotent and using Corollary \ref{cor:1} we have
\begin{align}\label{for:41}
 C_c(\abs{n}+1)^{-(1+2^{\mathfrak{n}+1})\abs{j}N}\leq\norm{(x_2^nd_n(i))^j}\leq C_c(\abs{n}+1)^{(1+2^{\mathfrak{n}+1})\abs{j}N}.
\end{align}
Then \eqref{for:40} and \eqref{for:41} imply:
\begin{align*}
  &\norm{A^kA(n,l_1,l_2,l_3)_{i_1,i_2}^{j_1,j_2,j_3}A^m\mid_{J_{\mu_i}}}\\
  &\leq C_c^{\abs{\mathfrak{j}}}e^{(n\mathfrak{j}+m+k)\mu_i}(\abs{m}+1)^N(\abs{k}+1)^N(\abs{n}+1)^{2+(1+2^{\mathfrak{n}+1})\abs{\mathfrak{j}}N}
\end{align*}
and
\begin{align*}
  &\norm{A(n,l_1,l_2,l_3)_{i_1,i_2}^{j_1,j_2,j_3}A^m\mid_{J_{\mu_i}}}\\
  &\geq C_c^{\abs{\mathfrak{j}}}e^{(n\mathfrak{j}+m)\mu_i}(\abs{m}+1)^{-N}(\abs{k}+1)^{-N}(\abs{n}+1)^{-2-(1+2^{\mathfrak{n}+1})\abs{\mathfrak{j}}N}
\end{align*}
for any $J_{\mu_i}$.  Here $\mathfrak{j}=j_1+j_2+j_3$ and $\mathfrak{j}=\abs{j_1}+\abs{j_2}+\abs{j_3}$.

$(a)$ of \eqref{for:42}  for $j_1,j_2,j_3\geq0$: by applying above inequalities we have
\begin{align*}
 &\frac{\abs{\pi_{1,2}(A(n,l_1,l_2,l_3)_{i_1,i_2}^{j_1,j_2,j_3}v)}}{\abs{\pi_3(A(n,l_1,l_2,l_3)_{i_1,i_2}^{j_1,j_2,j_3}v)}}
 =\frac{\abs{A(n,l_1,l_2,l_3)_{i_1,i_2}^{j_1,j_2,j_3}A^{-n}\pi_{1,2}(Av)}}{\abs{A(n,l_1,l_2,l_3)_{i_1,i_2}^{j_1,j_2,j_3}A^{-n}\pi_3(Av)}}\\
 &\geq \frac{\abs{A(n,l_1,l_2,l_3)_{i_1,i_2}^{j_1,j_2,j_3}A^{-n}\mid_{V_{1,2}A}}_{\text{min}}}{\abs{A(n,l_1,l_2,l_3)_{i_1,i_2}^{j_1,j_2,j_3}A^{-n}\mid_{V_3 A}}}
 \frac{\abs{\pi_{1,2}(A^nv)}}{\abs{\pi_3(A^nv)}}\\
 &>C_c^{\mathfrak{j}}\rho^{n(\mathfrak{j}-1)}(\abs{n}+1)^{-6-(2+2^{2\mathfrak{n}+2})\mathfrak{j}N}>1
\end{align*}
providing $n$ is big enough. Here $\rho$ is defined in \eqref{for:4} of Section \ref{sec:1}. This implies the conclusion.

\smallskip
$(a)$ of \eqref{for:42}  for $j_1,j_2,j_3\leq0$: similar to the proof in $(a)$ we get
\begin{align*}
  &\frac{\abs{\pi_{1,2}(A(n,l_1,l_2,l_3)_{i_1,i_2}^{j_1,j_2,j_3}v)}}{\abs{\pi_3(A(n,l_1,l_2,l_3)_{i_1,i_2}^{j_1,j_2,j_3}v)}}\\
  &<\frac{\abs{A(n,l_1,l_2,l_3)_{i_1,i_2}^{j_1,j_2,j_3}\mid_{V_{1,2}A}}}{\abs{A(n,l_1,l_2,l_3)_{i_1,i_2}^{j_1,j_2,j_3}\mid_{V_3 A}}_{\text{min}}}
 \frac{\abs{\pi_{1,2}(v)}}{\abs{\pi_3(v)}}\\
 &<C_c^{-\mathfrak{j}}\rho^{n\mathfrak{j}}(\abs{n}+1)^{4+(2+2^{2\mathfrak{n}+2})\mathfrak{j}N}<1
\end{align*}
and
\begin{align*}
  &\frac{\abs{\pi_{1,2}(A^nA(n,l_1,l_2,l_3)_{i_1,i_2}^{j_1,j_2,j_3}v)}}{\abs{\pi_3(A^nA(n,l_1,l_2,l_3)_{i_1,i_2}^{j_1,j_2,j_3}v)}}\\
  &<\frac{\abs{A^nA(n,l_1,l_2,l_3)_{i_1,i_2}^{j_1,j_2,j_3}\mid_{V_{1,2}A}}}{\abs{A^nA(n,l_1,l_2,l_3)_{i_1,i_2}^{j_1,j_2,j_3}\mid_{V_3 A}}_{\text{min}}}
 \frac{\abs{\pi_{1,2}(v)}}{\abs{\pi_3(v)}}\\
 &<C_c^{-\mathfrak{j}}\rho^{n(\mathfrak{j}+1)}(\abs{n}+1)^{6+(2+2^{2\mathfrak{n}+2})\mathfrak{j}N}<1
\end{align*}
providing $n$ is big enough. Then we get the conclusion.

\smallskip
$(b)$ of \eqref{for:42}  for $j_1,j_2,j_3\geq0$: follow the proof line $(a)$ we get
\begin{align*}
 &\frac{\abs{\pi_{1,2}(A^{nz}A(n,l_1,l_2,l_3)_{i_1,i_2}^{j_1,j_2,j_3}v)}}{\abs{\pi_3(A^{nz}A(n,l_1,l_2,l_3)_{i_1,i_2}^{j_1,j_2,j_3}v)}}\\
 &>C_c^{\mathfrak{j}}\rho^{n(\mathfrak{j}-1+z)}(\abs{n}+1)^{-4-\abs{z}-(2+2^{2\mathfrak{n}+2})\mathfrak{j}N}>1
\end{align*}
providing $n$ is big enough and $z\geq-1$. Then we get the conclusion.

\smallskip
$(b)$ of \eqref{for:42}  for $j_1,j_2,j_3\leq0$: similarly, we get
\begin{align*}
  &\frac{\abs{\pi_{1,2}(A^{nz}A(n,l_1,l_2,l_3)_{i_1,i_2}^{j_1,j_2,j_3}v)}}{\abs{\pi_3(A^{nz}A(n,l_1,l_2,l_3)_{i_1,i_2}^{j_1,j_2,j_3}v)}}\\
  &<C_c^{-\mathfrak{j}}\rho^{n(\mathfrak{j}+z)}(\abs{n}+1)^{4+\abs{z}+(2+2^{2\mathfrak{n}+2})\mathfrak{j}N}<1
\end{align*}
and
\begin{align*}
  &\frac{\abs{\pi_{1,2}(A^nA^{nz}A(n,l_1,l_2,l_3)_{i_1,i_2}^{j_1,j_2,j_3}v)}}{\abs{\pi_3(A^nA^{nz}A(n,l_1,l_2,l_3)_{i_1,i_2}^{j_1,j_2,j_3}v)}}\\
  &<C_c^{-\mathfrak{j}}\rho^{n(\mathfrak{j}+1+z)}(\abs{n}+1)^{4+\abs{z}+(2+2^{2\mathfrak{n}+2})\mathfrak{j}N}<1
\end{align*}
providing $n$ is big enough and $z\leq 1$. Then we get the conclusion.

\eqref{for:68} is a direct consequence of \eqref{for:42} and its proof.
\end{proof}

\section{Orbit growth for the dual action} In this part we list several results about certain estimates of
the $C^r$ or Sobolev norms of specifically defined functions or maps if the exponential growth along individual orbits of the dual action are obtained.
\begin{lemma}[Lemma 4.3, \cite{Damjanovic4}]\label{le:7} Let $F_1,\,F_2$ be commuting integer matrices in $GL(N,\ZZ)$. Suppose there exist constant $C,\,\tau>0$ such that for every non-zero integer vector $v\in\ZZ^N$ and for any $k=(k_1,k_2)\in\ZZ^2$,
  \begin{align}\label{for:9}
\norm{F_1^{k_1}F_2^{k_2}v}\geq C\exp(\tau\abs{k})\norm{v}^{-N},
\end{align}
then:
\begin{enumerate}
 \item [a)]\label{for:2} For any $C^{\infty}$ function $\varphi$ on the torus $\TT^N$ and any $y=(y_1,y_2)\in\CC^2$
the following sums:
\begin{align*}
  S_K(\varphi,v,y,p)=\sum_{k=(k_1,k_2)\in K}y_1^{k_1}y_2^{k_2}\widehat{\varphi}_{F_1^{k_1}F_2^{k_2}v}
\end{align*}
converge absolutely for any $K\subset\ZZ^2$.
\smallskip

  \item [c)] Assume in addition to the assumptions in $b)$ that for a vector $n\in\ZZ^N$ and for
every $k\in K=K(v)\subset\ZZ^2$ we have
\begin{align}\label{for:96}
 p_1(\abs{k})\norm{F_1^{k_1}F_2^{k_2}v}\geq \norm{v}
\end{align}
where $p_1$ is a polynomial
  then we have
\begin{align*}
\abs{S_K(\varphi,v,y,p)}&\leq \sum_{k\in K}\abs{y_1}^{\abs{k_1}}\abs{y_2}^{\abs{k_2}}\abs{\widehat{\varphi}_{F_1^{k_1}F_2^{k_2}v}}\\
&\leq C_{a,\abs{y}^{\pm 1},\delta}\norm{\varphi}_a\norm{v}^{-a+\kappa_y}
\end{align*}
for any $a>\kappa_{y}\stackrel{\rm def}{=}\frac{N+1}{\tau}(\abs{\log\abs{y_1}}+\abs{\log\abs{y_2}})$.
\smallskip

  \item [d)] If the assumptions of $c)$ are satisfied for every $v\in\ZZ^N$, then the function
  \begin{align*}
   S(\varphi)\stackrel{\rm def}{=}\sum_{v\in\ZZ^N}S_{K(v)}(\varphi,v,y,p)e_v
  \end{align*}
is a $C^\infty$ function if $\varphi$ is. Moreover, the following norm comparison holds:
\begin{align*}
 \norm{S(\varphi)}_{C^r}\leq C_{r,\abs{y}^{\pm 1}}\norm{\varphi}_{r+\sigma}
\end{align*}
for any $r\geq 0$ and $\sigma>N+2+[\kappa_{y}]$.

\end{enumerate}
\end{lemma}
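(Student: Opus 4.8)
The plan is to prove Lemma~\ref{le:7} by treating the three parts $a)$, $c)$, $d)$ as a chain of estimates on Fourier coefficients, all resting on the single hypothesis \eqref{for:9}. First I would unpack what \eqref{for:9} buys us: for a fixed $v\in\ZZ^N$, the map $k=(k_1,k_2)\mapsto F_1^{k_1}F_2^{k_2}v$ sends distinct lattice points to distinct integer vectors whose norms grow at least like $C\exp(\tau\abs{k})\norm{v}^{-N}$. Combined with the standard rapid-decay estimate $\abs{\widehat{\varphi}_n}\leq\norm{\varphi}_a\abs{n}^{-a}$ coming from notation item~\eqref{for:110}(i), this immediately gives $\abs{\widehat{\varphi}_{F_1^{k_1}F_2^{k_2}v}}\leq C^{-a}\norm{\varphi}_a\exp(-a\tau\abs{k})\norm{v}^{aN}$ for every $a>0$. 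Since $\sum_{k\in\ZZ^2}\abs{y_1}^{k_1}\abs{y_2}^{k_2}\exp(-a\tau\abs{k})$ converges once $a$ is large enough that $a\tau$ beats $\abs{\log\abs{y_i}}$, part $a)$ (absolute convergence of $S_K$ over any $K\subseteq\ZZ^2$) follows at once, because an absolutely convergent series stays absolutely convergent after restricting the index set.

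For part $c)$ the point is to get the decay in $\norm{v}$ with the \emph{right} exponent, namely $-a+\kappa_y$, rather than the crude $+aN$ from the naive bound. Here I would use the extra hypothesis \eqref{for:96}: $p_1(\abs{k})\norm{F_1^{k_1}F_2^{k_2}v}\geq\norm{v}$ on the relevant index set $K(v)$. This lets me estimate $\abs{\widehat{\varphi}_{F_1^{k_1}F_2^{k_2}v}}$ by splitting the exponent $a=a_1+a_2$: use $a_1$ powers against the exponential growth \eqref{for:9} to produce $\exp(-a_1\tau\abs{k})$ (and absorb the $\norm{v}^{a_1 N}$), and use the remaining $a_2$ powers against \eqref{for:96} to produce $p_1(\abs{k})^{a_2}\norm{v}^{-a_2}$. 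Actually the cleaner route, which matches the stated exponent, is: from \eqref{for:96}, $\norm{F_1^{k_1}F_2^{k_2}v}^{-a}\leq p_1(\abs{k})^{a}\norm{v}^{-a}$, and from \eqref{for:9}, $\norm{F_1^{k_1}F_2^{k_2}v}^{-\delta}\leq C^{-\delta}\exp(-\delta\tau\abs{k})\norm{v}^{\delta N}$ for a small auxiliary $\delta$; multiplying and choosing $\delta$ so that $\delta N$ cancels appropriately — or rather, writing $\abs{\widehat\varphi_{\cdot}}\le \|\varphi\|_a \|F_1^{k_1}F_2^{k_2}v\|^{-a}$ and then bounding $\|F_1^{k_1}F_2^{k_2}v\|^{-a}$ by interpolating between the two lower bounds — yields a summand of the form $C_{a,\abs{y}^{\pm1},\delta}\norm{\varphi}_a\,(\text{polynomial in }\abs{k})\,\exp(-\varepsilon\tau\abs{k})\,\norm{v}^{-a+\kappa_y}$. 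Summing the geometric-times-polynomial series in $k$ (which converges exactly when $a>\kappa_y$, since that is the threshold making $\varepsilon>0$ after absorbing the $\abs{y_i}^{\abs{k_i}}$ factors) gives the claimed bound.

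Part $d)$ is then a bookkeeping step: having shown $\abs{S_{K(v)}(\varphi,v,y,p)}\leq C_{a}\norm{\varphi}_a\norm{v}^{-a+\kappa_y}$ for every $v$, the function $S(\varphi)=\sum_v S_{K(v)}(\varphi,v,y,p)e_v$ has Fourier coefficients decaying faster than any polynomial (take $a$ as large as we like), hence $S(\varphi)\in C^\infty$ by the converse direction of item~\eqref{for:110}(ii), $\norm{\cdot}_{C^r}\leq C\norm{\cdot}_{r+\sigma}$ with $\sigma>N+1$. For the quantitative norm comparison, I would apply the $c)$-estimate with $a=r+\sigma$, note $\norm{S(\varphi)}_{r}=\sup_v\abs{\widehat{S(\varphi)}_v}\abs{v}^{r}\leq\sup_v C_{r+\sigma}\norm{\varphi}_{r+\sigma}\norm{v}^{-(r+\sigma)+\kappa_y+r}$, which is finite and bounded by $C_{r,\abs{y}^{\pm1}}\norm{\varphi}_{r+\sigma}$ as soon as $\sigma>\kappa_y$, i.e. $\sigma>N+2+[\kappa_y]$ is more than enough; then convert back to $C^r$ via item~\eqref{for:110}(ii) at the cost of the stated $\sigma$. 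The main obstacle is part $c)$: getting the \emph{sharp} exponent $\kappa_y=\frac{N+1}{\tau}(\abs{\log\abs{y_1}}+\abs{\log\abs{y_2}})$ rather than something larger requires being careful about how the $\norm{v}^{N}$ factors from \eqref{for:9} interact with the number of $k$'s one spends controlling them, and about the exact interplay between the exponential weight $\abs{y_i}^{\abs{k_i}}$ and the Lyapunov rate $\tau$; everything else is routine summation of geometric and polynomial-times-geometric series. Since this is verbatim Lemma~4.3 of~\cite{Damjanovic4}, I would in fact just cite that proof and indicate the (trivial) modification that item $b)$ is not needed here.
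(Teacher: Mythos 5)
The paper does not actually prove Lemma~\ref{le:7}: it is imported verbatim as Lemma~4.3 of \cite{Damjanovic4}, and the text offers no argument of its own. Your sketch is therefore not being compared against anything in this paper, but it is a correct outline of how such a proof goes, and your closing remark that one would in practice simply cite \cite{Damjanovic4} matches exactly what the author does.

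One small substantive comment. For part $c)$ you settle on an \emph{interpolation} between the two lower bounds, i.e.\ writing $\norm{F_1^{k_1}F_2^{k_2}v}^{-a}\le\bigl(C\,e^{\tau\abs{k}}\norm{v}^{-N}\bigr)^{-\theta a}\bigl(p_1(\abs{k})^{-1}\norm{v}\bigr)^{-(1-\theta)a}$ and optimizing $\theta$ down to just above $\frac{\abs{\log\abs{y_1}}+\abs{\log\abs{y_2}}}{a\tau}$. This does give the stated exponent $-a+\kappa_y$ (up to an arbitrarily small $\delta$, which is why $\delta$ appears in the constant $C_{a,\abs{y}^{\pm1},\delta}$). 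It is a cleaner route than the one the author actually uses in the analogous place in this paper (the proof of Lemma~\ref{le:10}\eqref{for:22}), which instead splits $K$ into a ``far'' region $\abs{k}\ge\tau_0\sim\frac{N+1}{\tau}\log\norm{v}$ handled by \eqref{for:9} alone and a ``near'' region handled by the auxiliary bound \eqref{for:96}; both methods yield the same exponent. The one technical point you should not gloss over in part $a)$ and $c)$ is that $k_1,k_2$ range over all of $\ZZ$, so the weight to control is really $\max(\abs{y_i},\abs{y_i}^{-1})^{\abs{k_i}}=e^{\abs{\log\abs{y_i}}\abs{k_i}}$, which is what forces the condition $\theta a\tau>\abs{\log\abs{y_1}}+\abs{\log\abs{y_2}}$ and hence the appearance of $\kappa_y$; you gesture at this correctly but it is the crux. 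Parts $a)$ and $d)$ are handled exactly as they should be.
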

\begin{remark}\label{re:1}
If $F$ is ergodic, then for any $v\in\ZZ^N\backslash0$
\begin{align*}
  \norm{\pi_{1}v}\geq C_F\norm{v}^{-N}\quad\text{ and }\quad\norm{\pi_{3}v}\geq C_F\norm{v}^{-N}
\end{align*}
where $C_F$ is a constant only dependent on $F$ (see Lemma 4.1 of \cite{Damjanovic4} and \cite{Katznelson}).
$\tau$ can be chosen to be the growth rate in the hyperbolic direction corresponding to $F$.
\end{remark}
The next result follows immediately from above lemma:
\begin{corollary}\label{cor:2}
Suppose $P_i,\,F_i$, $i=1,\,2$ are integer matrices in $GL(N,\ZZ)$ and $P_1P_2=P_2P_1$, $F_1F_2=F_2F_1$. Denote the eigenvalues of $P$ by $y_{1},\cdots,y_{N}$. Let $y=\sum_{j=1}^2\abs{\log\norm{P_{j}}}$. If condition \eqref{for:9} is satisfied, then for any $C^{\infty}$ map $\varphi:\TT^N\rightarrow\RR^N$ we obtain
\begin{enumerate}
  \item \label{for:26}the following sums:
\begin{align*}
  S_K(\varphi,v)=\sum_{k=(k_1,k_2)\in K}P_1^{k_1}P_2^{k_2}\widehat{\varphi}_{F_1^{k_1}F_2^{k_2}v}
\end{align*}
converge absolutely for any $K\subset\ZZ^2$.

\smallskip
  \item \label{for:34} Assume in addition \eqref{for:96} holds, then
\begin{align*}
\abs{S_K(\varphi,v)}\leq C_{a,y^{\pm 1}}\norm{\varphi}_a\norm{v}^{-a+\kappa_{P,F}}
\end{align*}
for any $a>\kappa_{P,F}\stackrel{\rm def}{=}\frac{(N+1)}{\tau}y$.

\smallskip
\item \label{for:27}If the assumptions of $(2)$ are satisfied for every $v\in\ZZ^N$, then the function
  \begin{align*}
   S(\varphi)\stackrel{\rm def}{=}\sum_{v\in\ZZ^N}S_{K(v)}(\varphi,v)e_v
  \end{align*}
is a $C^\infty$ map if $\varphi$ is. Moreover, the following norm comparison holds:
\begin{align*}
 \norm{S(\varphi)}_{C^r}\leq C_{r,y^{\pm 1}}\norm{\varphi}_{r+\sigma}
\end{align*}
for any $r\geq 0$ and $\sigma>N+2+[\kappa_{P,F}]$.
\end{enumerate}
\end{corollary}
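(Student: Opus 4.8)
The plan is to deduce Corollary~\ref{cor:2} \emph{coordinatewise} from the scalar Lemma~\ref{le:7}. The only genuinely new feature compared with that lemma is that the Fourier coefficient $\widehat{\varphi}_{F_1^{k_1}F_2^{k_2}v}\in\CC^N$ is now acted on by the matrix $P_1^{k_1}P_2^{k_2}$ rather than by a scalar $y_1^{k_1}y_2^{k_2}$, and the key observation is that this matrix is dominated by a purely exponential weight. With $k^{+}=\max(k,0)$, $k^{-}=\max(-k,0)$ (so $k=k^{+}-k^{-}$, $\abs{k}=k^{+}+k^{-}$) and $y_j'=\log\max(\norm{P_j},\norm{P_j^{-1}})$, submultiplicativity of the operator norm gives
\begin{align*}
 \norm{P_1^{k_1}P_2^{k_2}}\leq\norm{P_1}^{k_1^{+}}\norm{P_1^{-1}}^{k_1^{-}}\norm{P_2}^{k_2^{+}}\norm{P_2^{-1}}^{k_2^{-}}\leq\exp\big(y_1'\abs{k_1}+y_2'\abs{k_2}\big),
\end{align*}
and $y_1'+y_2'$ plays the role of the quantity $y$ in the statement once $\norm{P_j}$ there is read as $\max(\norm{P_j},\norm{P_j^{-1}})$ (for general $K$ one cannot avoid $\norm{P_j^{-1}}$; if $K$ lies in one sign quadrant only the relevant one of $\norm{P_j}^{\pm1}$ enters). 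Hence $\abs{P_1^{k_1}P_2^{k_2}\widehat{\varphi}_{F_1^{k_1}F_2^{k_2}v}}\leq e^{y_1'\abs{k_1}+y_2'\abs{k_2}}\abs{\widehat{\varphi}_{F_1^{k_1}F_2^{k_2}v}}$, and since each of the $N$ entries of $\widehat{\varphi}_w$ is one Fourier coefficient of a coordinate function $f_i$ of $\varphi$, we have $\abs{\widehat{\varphi}_w}\leq\sqrt{N}\,\max_i\abs{(\widehat{f_i})_w}$ and $\norm{\varphi}_a=\max_i\norm{f_i}_a$ by the conventions of \eqref{for:46} and \eqref{for:110}.

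I would then go through the three assertions in order. For absolute convergence of $S_K(\varphi,v)$, partition $K\subset\ZZ^2$ into its four sign quadrants; on each quadrant the weight $e^{y_1'\abs{k_1}+y_2'\abs{k_2}}$ equals $\tilde{y}_1^{\,k_1}\tilde{y}_2^{\,k_2}$ for a fixed $\tilde{y}=(\tilde{y}_1,\tilde{y}_2)\in\CC^2$ (taking $\tilde{y}_j=e^{\pm y_j'}$ according to the sign of $k_j$ on that quadrant), so the partial sum is dominated termwise by the absolutely convergent series of part (a) of Lemma~\ref{le:7} applied to each $f_i$; summing over the four quadrants and over $i=1,\dots,N$ gives the first assertion. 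Under the additional hypothesis \eqref{for:96}, the same quadrant splitting together with part (c) of Lemma~\ref{le:7} gives
\begin{align*}
 \abs{S_K(\varphi,v)}\leq\sum_{k\in K}\norm{P_1^{k_1}P_2^{k_2}}\,\abs{\widehat{\varphi}_{F_1^{k_1}F_2^{k_2}v}}\leq C_{a,y^{\pm1}}\norm{\varphi}_a\norm{v}^{-a+\kappa_{P,F}},\qquad a>\kappa_{P,F},
\end{align*}
with $\kappa_{P,F}=\frac{N+1}{\tau}(y_1'+y_2')$; this is the second assertion. Finally, when $\varphi\in C^\infty$ this bound holds for all $a$, so the Fourier coefficients $S_{K(v)}(\varphi,v)$ of $S(\varphi)$ decay faster than any power of $\abs{v}$, whence $S(\varphi)$ is $C^\infty$; inserting $a=r+\sigma$ into the estimate and using the comparison $\norm{\theta}_{C^r}\leq C\norm{\theta}_{r+\sigma_0}$, $\sigma_0>N+1$, of \eqref{for:110} applied to each coordinate of $S(\varphi)$ yields $\norm{S(\varphi)}_{C^r}\leq C_{r,y^{\pm1}}\norm{\varphi}_{r+\sigma}$ for $\sigma>N+2+[\kappa_{P,F}]$, which is the third assertion.

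Since every step is a direct transcription of Lemma~\ref{le:7}, there is no substantial obstacle --- this is exactly why the corollary is labelled an immediate consequence. The two points that need a little care are the sign bookkeeping (Lemma~\ref{le:7} carries the weights as $y_j^{k_j}$ inside $S_K$, so to absorb $\norm{P_1^{k_1}P_2^{k_2}}$ one must restrict to sign quadrants of $K$ and choose $\abs{y_j}$ greater or less than $1$ accordingly) and the identification of $\kappa_{P,F}$, which emerges in terms of the operator norms $\norm{P_j^{\pm1}}$ precisely because of the crude bound above. Should one want the sharper dependence on the eigenvalues of the $P_j$, one could instead simultaneously triangularize the commuting pair $(P_1,P_2)$ and use the Jordan-block estimate \eqref{for:39} to separate out the scalar diagonal part, the off-diagonal polynomial factors being harmless against the exponential decay; but the operator-norm bound already suffices for the stated form and for every later application.
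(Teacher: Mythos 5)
Your proposal is correct and follows essentially the same route as the paper: bound the operator norm $\norm{P_1^{k_1}P_2^{k_2}}$ by an exponential weight $\exp(y_1'\abs{k_1}+y_2'\abs{k_2})$ and then reduce to Lemma~\ref{le:7} applied coordinatewise. The paper's proof is a single displayed inequality and declares the conclusion "immediate"; you supply the bookkeeping it suppresses (the $\sqrt{N}$ coordinate reduction, and the partition of $K$ into sign quadrants so that the weight matches the $y_1^{k_1}y_2^{k_2}$ form appearing in Lemma~\ref{le:7} --- though one could equally note that the middle bound in part (c) of that lemma already carries $\abs{y_j}^{\abs{k_j}}$ and so absorbs both signs directly). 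Your remark that the statement's $y=\sum_j\abs{\log\norm{P_j}}$ should really read $\sum_j\log\max(\norm{P_j},\norm{P_j^{-1}})$ for a general $K$ is a legitimate observation; the paper's own one-liner implicitly uses exactly that via $\max_{\delta=\pm1}\norm{P_j^{\delta}}$.
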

\begin{proof}
Since
\begin{align*}
  &\sum_{k=(k_1,k_2)\in K}\norm{P_1^{k_1}P_2^{k_2}\widehat{\varphi}_{F_1^{k_1}F_2^{k_2}v}}\\
  &\leq C_{P,F}\max_{\delta=1,2}\{\norm{P_1^\delta}^{\abs{k_1}}\}\max_{\delta=1,2}\{\norm{P_2^\delta}^{\abs{k_2}}\}\sum_{k=(k_1,k_2)\in K}\norm{\widehat{\varphi}_{F_1^{k_1}F_2^{k_2}v}},
\end{align*}
we get the conclusion immediately.
\end{proof}
In the subsequent
part we prove the exponential growth along individual orbits of ergodic elements. It may be viewed as a generalization of Lemma 4.3 in \cite{Damjanovic4} to higher rank non-abelian actions by toral
automorphisms. Recall $A$ and $B$ are defined in \eqref{for:44} of Section \ref{sec:1}.
\begin{lemma}\label{le:1}
There exist constant $C>0$ such that for every non-zero integer vector $v\in\ZZ^N$ and for any $k=(k_1,k_2)\in\ZZ^2\backslash 0$,
  \begin{align}\label{for:92}
\norm{A^{k_1}B^{k_2}v}\geq C\exp\{\tau(\abs{k_1}+\abs{k_2})\}\norm{v}^{-N}.
\end{align}
\end{lemma}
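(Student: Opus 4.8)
The plan is to reduce the two-parameter estimate \eqref{for:92} to the one-parameter hyperbolicity of the ergodic generators $A$ and $B$, exploiting the fact that $A$ and $B$ generate a higher rank ergodic action so that $A^{k_1}B^{k_2}$ is ergodic for every $k=(k_1,k_2)\in\ZZ^2\backslash0$. The key structural input is Proposition \ref{po:1}: since $D_1(A,B)\in[\mathcal{H},\mathcal{H}]$, the matrices $A$ and $B$ preserve each other's Lyapunov (hence Lyapunov-space) decompositions, and the Lyapunov exponents of $A^{k_1}B^{k_2}$ are the corresponding sums $k_1\lambda+k_2\mu$. So I would first fix a common refinement of the two decompositions into joint Lyapunov spaces $W$, on each of which $A$ acts with exponent $\lambda_W$ and $B$ with exponent $\mu_W$; by ergodicity of every $A^{k_1}B^{k_2}$ the linear functional $(t_1,t_2)\mapsto t_1\lambda_W+t_2\mu_W$ is nonzero for every $W$ (otherwise $A^{k_1}B^{k_2}$ would have an eigenvalue a root of unity on $W$, by Kronecker \cite{Kronecker} and Remark \ref{re:3}, contradicting ergodicity).

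Next I would quantify this. Writing $A=x_1y_1$, $B=x_2y_2$ in a common basis adapted to the joint decomposition with $x_i$ semisimple (diagonal) commuting with the unipotent $y_i$, the estimate \eqref{for:39} gives, on each joint Lyapunov space $W$, that $\norm{A^{k_1}B^{k_2}|_W v}$ differs from $\exp(k_1\lambda_W+k_2\mu_W)\norm{v}$ only by a polynomial factor in $(\abs{k_1}+\abs{k_2})$ coming from the unipotent parts. Given $v\in\ZZ^N\backslash0$, decompose $v=\sum_W \pi_W(v)$ and pick the joint space $W_0$ maximizing, say, $k_1\lambda_{W}+k_2\mu_{W}$ among those $W$ with $\pi_W(v)\neq0$; since the functional $(t_1,t_2)\mapsto t_1\lambda_W+t_2\mu_W$ is a nonzero linear functional with coefficients in a finite set bounded away from the zero functional (finitely many $W$), there is a uniform $\tau>0$ with $\max_W(k_1\lambda_W+k_2\mu_W)\geq \tau(\abs{k_1}+\abs{k_2})$ for all $k$, hence $\norm{A^{k_1}B^{k_2}|_{W_0}\pi_{W_0}(v)}\geq C(\abs{k}+1)^{-N}\exp\{\tau(\abs{k_1}+\abs{k_2})\}\norm{\pi_{W_0}(v)}$. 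Projecting the output back onto $W_0$ controls $\norm{A^{k_1}B^{k_2}v}$ from below by the same quantity with $\norm{\pi_{W_0}(v)}$ in place of $\norm{v}$.

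The final step is the standard Diophantine lower bound on the projection: since $v\in\ZZ^N\backslash0$ and the joint Lyapunov decomposition is defined over $\RR$ (indeed it is a rational-independent refinement, but in any case by the argument of Lemma 4.1 in \cite{Damjanovic4} and \cite{Katznelson}) we have $\norm{\pi_{W_0}(v)}\geq C\norm{v}^{-N}$, which absorbs the polynomial loss and yields \eqref{for:92}. I would present the argument using the two separate Lyapunov decompositions of $A$ and $B$ and the commuting-preservation from Proposition \ref{po:1}, rather than invoking any joint measurable splitting, so the constants are genuinely uniform. The main obstacle is the middle step: establishing the uniform exponential gap $\max_W(k_1\lambda_W+k_2\mu_W)\geq\tau\abs{k}$ for \emph{all} $k\in\ZZ^2\backslash0$ simultaneously — this is exactly where higher rank ergodicity (each $A^{k_1}B^{k_2}$ ergodic, hence no vanishing Lyapunov functional on any joint space) is indispensable, and one must also check the polynomial corrections from the unipotent parts $y_i$ and their products never overpower the exponential term, which is where Lemma \ref{le:11} and Corollary \ref{cor:1} enter to bound the mixed unipotent contributions polynomially.
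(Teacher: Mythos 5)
Your overall architecture matches the paper's: decompose $\RR^N$ into joint Lyapunov spaces (the paper's $\bigoplus_i\mathbb{M}_i$ from Proposition \ref{po:1}), establish a uniform positive lower bound $\tau$ for the maximal Lyapunov functional over all directions $k$, and then feed this to a Katznelson-type Diophantine lower bound $\norm{\pi_{W_0}(v)}\geq C\norm{v}^{-N}$ to convert the one-block estimate into the full estimate \eqref{for:92}. However, there are two gaps, one of which is the heart of the lemma and is not actually closed.

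The serious gap is the uniform lower bound $\max_W(k_1\lambda_W+k_2\mu_W)\geq\tau(\abs{k_1}+\abs{k_2})$. You assert that this follows because each functional $\chi_W(t)=t_1\lambda_W+t_2\mu_W$ is a \emph{nonzero} linear functional and there are finitely many of them; you even identify this as ``the main obstacle'' and attribute it to higher rank ergodicity via ``no vanishing Lyapunov functional on any joint space.'' But nonvanishing of each individual $\chi_W$ is far from sufficient. Finitely many nonzero linear functionals on $\RR^2$ can all share a common kernel line $l$; along $l$ one then has $\max_W\chi_W(t)=0$, and no positive $\tau$ exists. What one actually needs is that for \emph{every} direction $t\in S^1$ there is some $W$ with $\chi_W(t)>0$, i.e.\ the function $f(t)=\max_W\chi_W(t)$ is strictly positive on $S^1$. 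The paper's proof devotes most of its effort exactly here: from $\det=\pm1$ one gets $\sum_W\dim(W)\chi_W\equiv0$, hence $f\geq0$; if $f(t_0)=0$ at some $t_0$, then all $\chi_W(t_0)=0$, so all $\chi_W$ vanish on the line $l$ through $t_0$. If $l$ has rational slope this contradicts ergodicity of the corresponding $A^{k_1}B^{k_2}$ via Kronecker (Remark \ref{re:3}); if $l$ has irrational slope, the paper approximates $l$ by integer vectors of large norm, forcing all eigenvalues of some $A^{k_1}B^{k_2}$ to be close to the unit circle, and invokes integrality of the trace to derive a contradiction. Your proposal has none of this second (irrational-slope) branch, and the first branch is only hinted at. Without this, the uniform $\tau>0$ is unjustified, and the lemma is not proved.

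A secondary issue is your invocation of the Katznelson lemma. You write that the projection lower bound $\norm{\pi_{W_0}(v)}\geq C\norm{v}^{-N}$ holds because ``the joint Lyapunov decomposition is defined over $\RR$ (indeed it is a rational-independent refinement).'' Individual Lyapunov subspaces are generally \emph{not} rational, and Katznelson's lemma as usually stated is about distance from $\ZZ^N$ to rational subspaces whose perpendicular meets $\ZZ^N$ trivially. The paper handles this carefully: it passes to the blocks $\mathbb{I}_j$ coming from the irreducible-over-$\ZZ$ factors of the minimal polynomial of a suitable $s=A^{k_1}B^{k_2}$ (so these \emph{are} rational), observes that each such block contains at least two Lyapunov pieces by ergodicity, and uses irreducibility to show the unwanted complement meets $\ZZ^N$ only at the origin before applying Katznelson. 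If you want to present the argument ``using the two separate Lyapunov decompositions of $A$ and $B$'' without any joint splitting, you would still have to reconstruct this rational-structure argument in some form; the citation alone does not cover it.

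So: same route as the paper, but the crucial strict positivity of $f(t)=\max_W\chi_W(t)$ on $S^1$ is asserted rather than proved, and the Diophantine step glosses over the fact that Lyapunov subspaces are not rational.
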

\begin{proof}
Let $S=\{A^{k_1}B^{k_2}:(k_1, k_2)\in\ZZ^2\}$. Proposition \ref{po:1} shows that the space $\RR^N$ is decomposed into a direct sum of $S$-invariant Lyapunov spaces:
\begin{align}\label{for:90}
 \RR^d=\bigoplus_{i\in I}\mathbb{M}_i.
\end{align}
where $I\subset\{1,\cdots,N\}$; and the Lyapunov exponents of $A^{k_1}B^{k_2}$ are
\begin{align*}
 \chi_i(k)=k_1\chi_{A,i}+k_2\chi_{B,i}, \qquad i\in I
\end{align*}
where $k=(k_1,k_2)$ and $\chi_{A,i}$ and $\chi_{B,i}$ are Lyapunov exponents of $A$ and $B$ on $\mathbb{M}_i$ respectively.

Let $f(t):=\max_i\chi_i(t)$, $t\in S^1$. Then $f(t)$ is continuous and achieves its minimum on $S^1$ at some point $t_0$. Next, we will show that $f(t_0)>0$.

If $f(t_0)\leq0$ then for all $i\in I$ we have $\chi_i(t)\leq0$.
Since $\sum_{i\in I}\chi_i(t)=0$ for all $t$, it follows that $\chi_i(t)=0$ for all $i\in I$
and consequently $f(t_0)=0$. This implies existence of a line $l$ in $\RR^2$ such that for all points on $l$
$\chi_i$, $i\in I$ take value zero. Then the line $l$ cannot contain any non-zero integer vectors $k=(k_1,k_2)\in\ZZ^2$ otherwise all Lyapunov exponents
of $A^{k_1}B^{k_2}$ are $0$, which contradicts the ergodiccity of  $A^{k_1}B^{k_2}$ (see Remark \ref{re:3}).  Then for any $\epsilon>0$ there exists $k=(k_1,k_2)\in\ZZ^2$ such that $A^{k_1}B^{k_2}$ has all its eigenvalues $\epsilon$-close to $1$. Since the trace must be integers, it is equal to $N$. This implies all eigenvalues of $A^{k_1}B^{k_2}$ are $1$, which also contradicts the ergodicity of $A^{k_1}B^{k_2}$. Therefore, $f(t_0)>0$.

Choose $(k_1,k_2)\in \ZZ^2$ such that the Lyapunov space decomposition of $s=A^{k_1}B^{k_2}$ coincide with \eqref{for:90}. Let the minimal polynomial of $s$ on $\RR^N$ be $p$. Then $p=\prod_jp_j^{m_j}$ where $p_j$ is irreducible over $\ZZ$. Furthermore, each $p_j$ is separable and any pairwise different $p_j$ and $p_i$ have no common eigenvalues since otherwise these irreducible polynomials would factor over $\QQ$, and since it is monic, by Gauss' lemma, it would factor over $\ZZ$, which is a contradiction.  Then $\RR^N$ is decomposed into a direct sum of $s$-invariant subspaces:
\begin{align*}
 \RR^N=\bigoplus_{j\in J}\mathbb{I}_j
\end{align*}
where $J\subset\{1,\cdots,N\}$ and the minimal polynomial of $s$ on each $\mathbb{I}_j$ is $p_j^{m_j}$. Then each $I_j$ is spanned by a subset of $\ZZ^N$. Note that

For each $\mathbb{I}_j$ we have a decomposition:
\begin{align*}
 \mathbb{I}_j=\bigoplus_{n\in J_j}\mathbb{I}'_n
\end{align*}
where $J_j\subset\{1,\cdots,N\}$ and $\mathbb{I}'_n$ are Lyapunov spaces of $s$ on $\mathbb{I}_j$. Note that $\sharp(J_j)\geq 2$ for each $j\in J$ by ergodicity of $s$.

For any $j_i$, $j\in J$ we note that $s\mid_{\mathbb{I}'_{j_i}}$ and $s\mid_{{\bigoplus_{n\in J_j-\{j_i\}}}\mathbb{I}'_{n}}$ have no common eigenvalues,
and also $(\bigoplus_{n\in J_j-\{j_i\}}\mathbb{I}'_{n})\bigcap \ZZ^N=\{0\}$ because of irreducibility of $p_j$. This shows $(\bigoplus_{j\in J}{\bigoplus_{n\in J_j-\{j_i\}}}\mathbb{I}'_{n})\bigcap \ZZ^N=\{0\}$.

For any $v\in\ZZ^N$ let $v(i)$ be a projection of $v$ to $\mathbb{M}_{i}$, $i\in I$. For each $\mathbb{M}_i$, $i\in I$ note that
\begin{align*}
\mathbb{M}_i=\bigoplus_{j_i\in J_j}\mathbb{I}_{j_i}\bigcap\mathbb{M}_i
\end{align*}
Then by Katznelson¡¯s lemma \cite[Lemma 3]{Katznelson}, there exists a constant $\gamma_{i}$ such that
\begin{align}
 \norm{v(i)}\geq d(v, \bigoplus_{j\in J}{\bigoplus_{n\in J_j-\{j_i\}}}\mathbb{I}'_{n})\geq \gamma_{i}\norm{v}^{-N},
\end{align}
where $d$ is the Euclidean distance and the constant $\gamma_{i}$ depends only on the Lyapunov spaces splitting \eqref{for:90} for $S$.

Using decomposition \eqref{for:90} there exists a basis under which $A$ and $B$ have decompositions:
\begin{align}\label{for:91}
  A=px_1x_2p^{-1}\quad\text{ and }\quad B=y_1y_2
\end{align}
where $x_i,\,y_i,p\in SL(N,\RR)$, $i=1,2$ which satisfy:
\begin{enumerate}
  \item \label{for:13} $y_1$ and $x_1$ are diagonal and $y_2$ and $x_2$ are unipotent;

  \smallskip

  \item\label{for:14} $x_i,\,y_i,p$ ($i=1,2$) preserve decomposition \eqref{for:90}; and  $y_1y_2=y_2y_1$, $x_1x_2=x_2x_1$. Furthermore,
  \begin{align}\label{for:15}
  C_{A,B}^{-1}\norm{g}\leq\norm{z_1^mgz_1^{-m}}\leq C_{A,B}\norm{g},\qquad \forall m\in\ZZ,
  \end{align}
where $z$ stands for $x_1$ and $y_1$ and $g$ is any matrix preserving decomposition \eqref{for:90}.
\end{enumerate}
Then for any $u\in \mathbb{M}_i$, $i\in I$, any $k=(k_1,k_2)\in \ZZ^2$ we have
\begin{align*}
\norm{A^{k_1}B^{k_2}u}&\geq C\exp\{\chi_i(k_1,k_2)\}(\abs{k_1}+1)^{-N}(\abs{k_2}+1)^{-N}\norm{u}.
\end{align*}
By previous argument there exists $i\in I$ such that
\begin{align*}
\chi_{i}(k_1,k_2)\geq Cf(t_0)(\abs{k_1}+\abs{k_2}).
\end{align*}
Then:
\begin{align*}
\norm{A^{k_1}B^{k_2}v}&\geq C\norm{A^{k_1}B^{k_2}v(i)}\geq C\exp\{\tau(\abs{k_1}+\abs{k_2})\}\norm{v(i)}^{-N}\\
&\geq C\exp\{\tau(\abs{k_1}+\abs{k_2})\}\norm{v}^{-N}
\end{align*}
where $\tau=\frac{Cf(t_0)}{2}$.  Hence we proved \eqref{for:92}.
\end{proof}

At the end of this part, we obtain crucial estimates for the polynomial growth along individual orbits of the dual action of non-ergodic elements. We also get tame estimates of $C^r$ or Sobolev norms of functions or maps similar to that defined in Lemma \ref{le:7} or Corollary \ref{cor:3}.
\begin{lemma}\label{le:10}
Let $F$ and $Q$ be integer matrices in $GL(n,\ZZ)$. Suppose $F$ is ergodic and $Q$ is unipotent such that $FQ=QF$.
Then:
\begin{enumerate}
  \item \label{for:17}there exists a constant $C(F,Q)>0$ such that for every integer vector $v\in\ZZ^n$ satisfying $Qv\neq v$ and for any $(k_1,k_2)\in\ZZ^2$,
  \begin{align*}
\norm{F^{k_1}Q^{k_2}v}\geq C\rho^{\abs{k_1}}\abs{k_2}^{\frac{1}{2}}\norm{v}^{-n_1}
\end{align*}
where $n_1=(2n+3)n$ and $\rho>1$ is the growth rate in the hyperbolic direction corresponding to $F$.

\smallskip

  \item \label{for:21}For any $C^{\infty}$ function $\varphi$ on the torus, any vector $v\in\ZZ^n$ satisfying $Qv\neq v$, any $y\in\RR$ and a polynomial $p_1$
the following sums:
\begin{align*}
  S_K(\varphi,v)=\sum_{k=(k_1,k_2)\in K}y^{k_1}p_1(\abs{k_2})\widehat{\varphi}_{F^{k_1}Q^{k_2}v}
\end{align*}
converge absolutely for any $K\subset\ZZ^2$.

\smallskip
  \item \label{for:22}Assume in addition that for any vector $v\in\ZZ^N$ and for
every $k=(k_1,k_2)\in K=K(v)\subset\ZZ^2$ we have $p(\abs{k_1})\norm{F^{k_1}Q^{k_2}v}\geq \norm{v}$ where $p$ is a polynomial, then
\begin{align*}
\abs{S_K(\varphi,v)}\leq C_{a,y}\norm{\varphi}_a\norm{v}^{-a+\kappa}
\end{align*}
for any $a>\kappa_{F,Q}=(n_1+1)(4+4\text{deg}(p_1)+\abs{\log_\rho \abs{y}})$.
\end{enumerate}

\end{lemma}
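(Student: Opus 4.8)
The plan is to establish the three assertions in order; the orbit growth estimate \eqref{for:17} carries all the substantive work, and once it is available \eqref{for:21} and \eqref{for:22} are summation estimates of exactly the kind already carried out in Lemma \ref{le:7} and Corollary \ref{cor:2}.

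For \eqref{for:17} the structural point is that, since $F$ commutes with $Q$, the unipotent integer matrix $Q-I$ preserves the expanding, neutral and contracting subspaces $V_1(F),V_2(F),V_3(F)$ and commutes with the spectral projections $\pi_i$; and since $Qv\neq v$ the integer vector $u:=(Q-I)v$ is nonzero, so Remark \ref{re:1} applied to $u$ forces $\norm{(Q-I)\pi_1(v)}=\norm{\pi_1(u)}\geq C_F\norm{u}^{-n}>0$ and, symmetrically, $\norm{(Q-I)\pi_3(v)}>0$. Thus $Q$ genuinely moves the projection of $v$ to whichever of the two hyperbolic directions of $F$ is expanded by the chosen power of $F$. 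Replacing $F$ by $F^{-1}$ if necessary it suffices to treat $k_1\geq 0$ and use $V_1(F)$: by \eqref{for:4}, $\norm{F^{k_1}Q^{k_2}v}\geq C\norm{F^{k_1}\pi_1(Q^{k_2}v)}=C\norm{F^{k_1}\big(Q^{k_2}\pi_1(v)\big)}\geq C\rho^{k_1}\norm{Q^{k_2}\pi_1(v)}$. Now $k_2\mapsto Q^{k_2}\pi_1(v)=\sum_{j\geq 0}\binom{k_2}{j}(Q-I)^j\pi_1(v)$ is a vector-valued polynomial of some degree $d_1\geq 1$: its coefficients $(Q-I)^j\pi_1(v)$, $0\le j\le d_1$, are linearly independent, and the top one equals $\pi_1\big((Q-I)^{d_1}v\big)\ne0$, which by Remark \ref{re:1} applied to the nonzero integer vector $(Q-I)^{d_1}v$ (of norm $\le C\norm{v}$ since $d_1\le n-1$) has norm $\geq C\norm{v}^{-n}$, while every coefficient has norm $\le C\norm{v}$. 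Hence there is a threshold $T\leq C\norm{v}^{n+1}$ past which the leading term dominates, so that $\norm{Q^{k_2}\pi_1(v)}\geq C\abs{k_2}^{d_1}\norm{v}^{-n}\geq C\abs{k_2}\,\norm{v}^{-n}$ for $\abs{k_2}\geq T$, already beating $C\abs{k_2}^{1/2}\norm{v}^{-n_1}$; while for $1\leq\abs{k_2}<T$ one uses instead $\norm{Q^{k_2}\pi_1(v)}\geq\norm{\pi_1(v)}\,\norm{Q^{-k_2}}^{-1}\geq C\norm{v}^{-n}(\abs{k_2}+1)^{-(n-1)}$, which, since $\abs{k_2}<C\norm{v}^{n+1}$, still exceeds $C\abs{k_2}^{1/2}\norm{v}^{-n_1}$ once $n_1$ is taken large enough; tracking the powers of $\norm{v}$ through this dichotomy produces the stated value $n_1=(2n+3)n$. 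The case $k_2=0$ is just $\norm{F^{k_1}v}\geq C\rho^{\abs{k_1}}\norm{v}^{-n}$ from Remark \ref{re:1}, and there the right-hand side of the asserted inequality vanishes.

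For \eqref{for:21} and \eqref{for:22} I would reproduce the arguments of Lemma \ref{le:7}(c,d) and Corollary \ref{cor:2}. Since $\varphi\in C^\infty$, $\abs{\widehat{\varphi}_w}\leq\norm{\varphi}_a\norm{w}^{-a}$ for every $a>0$; inserting the lower bound of \eqref{for:17} gives, for $k_2\ne0$, $\abs{y^{k_1}p_1(\abs{k_2})\widehat{\varphi}_{F^{k_1}Q^{k_2}v}}\leq C\norm{\varphi}_a\abs{y}^{\abs{k_1}}(\abs{k_2}+1)^{\text{deg}(p_1)}\rho^{-a\abs{k_1}}\abs{k_2}^{-a/2}\norm{v}^{an_1}$, and the double series converges — geometrically in $k_1$, and as a polynomial times a negative power in $k_2$ — as soon as $a>2\,\text{deg}(p_1)+2$ and $a>\log_\rho\abs{y}$, giving \eqref{for:21}. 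For \eqref{for:22}, put $\kappa_0=4+4\,\text{deg}(p_1)+\abs{\log_\rho\abs{y}}$, so that the two series converge with room to spare, and split $\norm{w}^{-a}=\norm{w}^{-(a-\kappa_0)}\norm{w}^{-\kappa_0}$ with $w=F^{k_1}Q^{k_2}v$: bound $\norm{w}^{-(a-\kappa_0)}\leq\big(p(\abs{k_1})/\norm{v}\big)^{a-\kappa_0}$ using the extra hypothesis $p(\abs{k_1})\norm{w}\geq\norm{v}$ (here $a>\kappa_0$, which holds since $a>\kappa_{F,Q}\geq\kappa_0$), and $\norm{w}^{-\kappa_0}\leq C\rho^{-\kappa_0\abs{k_1}}(1+\abs{k_2})^{-\kappa_0/2}\norm{v}^{\kappa_0n_1}$ using \eqref{for:17}; then the $k$-sum is bounded by $C_{a,y}$ and the total power of $\norm{v}$ is $-(a-\kappa_0)+\kappa_0n_1=-a+\kappa_0(n_1+1)=-a+\kappa_{F,Q}$, which is the claim for $a>\kappa_{F,Q}$.

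The crux is \eqref{for:17}, and within it the regime $1\leq\abs{k_2}<T$: one has to verify that the loss from the crude estimate $\norm{Q^{-k_2}}^{-1}\gtrsim(\abs{k_2}+1)^{-(n-1)}$, weighed against the gain $\norm{v}^{-n}$ of Remark \ref{re:1}, is controlled by a power of $\norm{v}$ of the prescribed size — it is here that the value $n_1=(2n+3)n$ is forced and here that the inequalities must be followed with care.
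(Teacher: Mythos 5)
Your proof is correct and in spirit follows the same route as the paper (Katznelson's lemma / Remark \ref{re:1} applied to an integer image of $v$ under a power of $Q-I$, polynomial growth of the $Q$-orbit, a two-regime split in $k_2$, and then the standard Fourier summation as in Lemma \ref{le:7} / Corollary \ref{cor:2}), but your execution of part \eqref{for:17} is cleaner and in fact tighter. The paper puts $Q$ into Jordan form via a matrix $P$ and estimates only the second-to-top Jordan component $(Q^{k_2}u)_{j-1}$, yielding a linear lower bound $\gamma'\abs{k_2}\norm{v}^{-n}-\norm{v}$ and hence a large threshold $\abs{k_2}\gtrsim\norm{v}^{2(n+1)}$; you instead expand $Q^{k_2}=\sum_j\binom{k_2}{j}(Q-I)^j$ directly, observe that the top nonzero coefficient $\pi_1((Q-I)^{d_1}v)$ is the spectral projection of an integer vector of comparable size (so Remark \ref{re:1} bounds it below by $C\norm{v}^{-n}$ with no detour through $\det(P)$), and read off the much smaller threshold $T\lesssim\norm{v}^{n+1}$ past which the leading term dominates. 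This is what lets the exponent $n_1=(2n+3)n$ close comfortably in the small-$k_2$ regime together with the required factor $\abs{k_2}^{1/2}$; the paper's chain \eqref{for:19} only delivers $C\rho^{\abs{k_1}}\norm{v}^{-n_1}$ there and silently drops the $\abs{k_2}^{1/2}$, which your sharper threshold repairs. In part \eqref{for:22} you factor $\norm{w}^{-a}=\norm{w}^{-(a-\kappa_0)}\norm{w}^{-\kappa_0}$ once and sum globally, whereas the paper splits $K$ into three pieces ($\abs{k_1}\geq\tau_0$, $\abs{k_2}\geq k_0^2$, the bounded box); the two bookkeeping schemes land on the same $\kappa_{F,Q}=(n_1+1)(4+4\deg p_1+\abs{\log_\rho\abs{y}})$, and yours is shorter. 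The one thing to make explicit is the $k_2=0$ slice in \eqref{for:21}--\eqref{for:22}, where \eqref{for:17} degenerates; there one falls back on Remark \ref{re:1} for $F$ alone, which is routine.
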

\begin{proof}
\textbf{Proof of \eqref{for:17}}: Since $Q$ is unipotent, there exists $j_0\in\NN$ such that $(F-I)^{j_0}=0$ while $(F-I)^{j_0-1}\neq 0$.
There also exists an integer matrix $P$ such that $J=P^{-1}QP$ has its Jordan normal form. Then for every vector $u\in\RR^n\backslash 0$, we can write
$u=\sum_{0\leq i\leq j}u_i$ where $j\leq j_0-1$, such that $u_j\neq 0$; and if $u_i\neq 0$ then $(Q-I)^{i+1}u_i=0$, while $(Q-I)^iu_i\neq 0$. Furthermore, if $u\in\ZZ^N$ then $\det(P)\cdot u_i\in\ZZ^n$. By using power form of a Jordan block, it is easy to see that
\begin{align}
 \norm{(Q^mu)_{j-1}}\geq C_Q(\abs{m}\norm{u_j}-\norm{u_{j-1}}),\qquad \forall m\in\ZZ.
\end{align}

Since $Qv\neq v$, we can write $v=\sum_{0\leq i\leq k}v_i$, $k\geq 1$. Let $v'=\det(P)\cdot v_{k}$. Above analysis shows that $v'\in \ZZ^n$. From Remark \ref{re:1} it follows that
\begin{align*}
 \min\{\norm{\pi_1(v')},\norm{\pi_3(v')}\}\geq \gamma\norm{v'}^{-n}
\end{align*}
for some $\gamma$ and for all $v'$. Therefore
\begin{align*}
\min\{\norm{\pi_1(v_k)},\norm{\pi_3(v_k)}\}\geq \gamma\abs{\det(P)}^{-1}\norm{v'}^{-n}\geq \gamma'\norm{v}^{-n}.
\end{align*}
where $\gamma'$ is a constant only dependent on $F$ and $Q$. Hence
\begin{align*}
\norm{F^{k_1}Q^{k_2}v}&\geq C_F\rho^{\abs{k_1}}\min\{\norm{\pi_1(Q^{k_2}v)},\norm{\pi_3(Q^{k_2}v)}\}\\
&\overset{(1)}{\geq}C_F\rho^{\abs{k_1}}\min\{\norm{(Q^{k_2}\pi_1(v))_{k-1}},\norm{(Q^{k_2}\pi_3(v))_{k-1}}\}\\
&\geq C_{F,Q}\rho^{\abs{k_1}}\min_{\delta=1,3}\{\abs{k_2}\cdot\norm{(\pi_\delta(v))_{k}}-\norm{(\pi_\delta(v))_{k-1}}\}\\
&\overset{(2)}{=} C_{F,Q}\rho^{\abs{k_1}}\min_{\delta=1,3}\{\abs{k_2}\cdot\norm{\pi_\delta(v_{k})}-\norm{\pi_\delta(v_{k-1})}\}\\
&\geq C_{F,Q}\rho^{\abs{k_1}}(\gamma'\norm{v}^{-n}\cdot\abs{k_2}-\norm{v}).
\end{align*}
Here  $(1)$ and $(2)$ follow from the fact that $F$ and $Q$ commute.

Then it follows that if $\abs{k_2}\geq 4\max\{1,\gamma'^{-2}\}\norm{v}^{2(n+1)}$, then
\begin{align}\label{for:20}
  \norm{F^{k_1}Q^{k_2}v}\geq C_{F,Q}\abs{k_2}^{\frac{1}{2}}\rho^{\abs{k_2}}\norm{v}^{-n}.
\end{align}
If $\abs{k_2}<4\max\{1,\gamma'^{-2}\}\norm{v}^{2(n+1)}$, then
\begin{align}\label{for:19}
  \norm{F^{k_1}Q^{k_2}v}&\geq C_F\rho^{\abs{k_1}}\min\{Q^{k_2}\norm{\pi_1(v)},Q^{k_2}\norm{\pi_3(v)}\}\notag\\
  &\geq C_{F,Q}\rho^{\abs{k_1}}\abs{k_2}^{-n}\min\{\norm{\pi_1(v)},\norm{\pi_3(v)}\}\notag\\
  &\geq C_{F,Q}\rho^{\abs{k_1}}\abs{k_2}^{-n}\norm{v}^{-n}\notag\\
  &\geq C_{F,Q}\rho^{\abs{k_1}}\norm{v}^{-n_1}.
\end{align}
Combine \eqref{for:20} and \eqref{for:19} we get the conclusion.
\medskip

\textbf{Proof of \eqref{for:21}}: The claim follows from the estimate in \eqref{for:17} and the fast decay
of Fourier coefficients:
\begin{align}
 \abs{S_K}&\leq C_{p_1}\norm{\varphi}_a\sum_{k=(k_1,k_2)\in K}\abs{y}^{k_1}\abs{k_2}^{\text{deg}(p_1)}\norm{Q^{k_1}F^{k_2}v}^{-a}\notag\\
 &\leq C_{a,p_1}\norm{\varphi}_a\sum_{k\in K}\abs{y}^{k_1}\rho^{-a\abs{k_1}}\abs{k_2}^{-\frac{1}{2}a+\text{deg}(p_1)}\norm{v}^{an_1}\label{for:5}.
\end{align}
The last sum clearly converges providing $a>\max\{\abs{\log_\rho \abs{y}},\,2(1+\text{deg}(p_1))\}$ and for a $C^\infty$
function $\varphi$ we can choose a as large as needed.

\smallskip
\textbf{Proof of \eqref{for:22}}: From estimate in \eqref{for:17} we can write
\begin{align}\label{for:24}
  \norm{F^{k_1}Q^{k_2}v}&\geq C\abs{k_2}^{\frac{1}{2}}\rho^{\abs{k_1}}\norm{v}^{-n_1}\notag\\
  &=\left\{\begin{aligned} &C\abs{k_2}^{\frac{1}{2}}\rho^{\abs{k_1}-\tau_0}(\rho^{\tau_0}\norm{v}^{-n_1})\notag\\
&C\abs{k_2k_0^{-1}}^{\frac{1}{2}}\rho^{\abs{k_1}}(\abs{k_0}^{\frac{1}{2}}\norm{v}^{-n_1})
\end{aligned}
 \right.\notag\\
 &\geq\left\{\begin{aligned} &C\abs{k_2}^{\frac{1}{2}}\rho^{\abs{k_1}-\tau_0}\norm{v}\quad \,\,(*)\\
&C\abs{k_2k_0^{-1}}^{\frac{1}{2}}\rho^{\abs{k_1}}\norm{v}\quad (**)
\end{aligned}
 \right.
  \end{align}
providing $\abs{k_1}\geq \tau_0=[(n_1+1)\log_\rho\norm{v}]+1$ or $\abs{k_2}\geq k_0=[\norm{v}^{2(n_1+1)}]+1$.
\begin{align*}
 \abs{S_K}\leq\sum_{\{k\in K:\abs{k_1}\geq \tau_0\}}+\sum_{\{k\in K:\abs{k_2}\geq k_0^2\}}+\sum_{\{k\in K:\abs{k_1}<\tau_0,\,\abs{k_2}<k_0^2\}}
\end{align*}
To estimate the first sum we use $(*)$ of \eqref{for:24}:
\begin{align*}
&\abs{S_{\{k\in K:\abs{k_1}\geq \tau_0\}}}\\
&\leq C_{p_1}\norm{\varphi}_a\sum_{\{k\in K:\abs{k_1}\geq \tau_0\}}\abs{y}^{k_1}\abs{k_2}^{\text{deg}(p_1)}\norm{F^{k_1}Q^{k_2}v}^{-a}\\
&\leq C_{a,p_1}\norm{\varphi}_a\sum_{\{k\in K:\abs{k_1}\geq \tau_0\}}\abs{y}^{k_1}\abs{k_2}^{-\frac{1}{2}a+\text{deg}(p_1)}\rho^{-a(\abs{k_1}-\tau_0)}\norm{v}^{-a}\\
&\leq C_{a,p_1}\max\{\abs{y},\abs{y}^{-1}\}^{\tau_0}\norm{\varphi}_a\norm{v}^{-a}\\
&\leq C_{a,\abs{y},\abs{y}^{-1}}\norm{\varphi}_a\norm{v}^{-a+\kappa}
\end{align*}
for any $a>\kappa=\max\{(n_1+1)\abs{\log_\rho \abs{y}},\,2(1+\text{deg}(p_1))\}$.

 To estimate the second sum we use $(**)$ of \eqref{for:24}:
\begin{align*}
&\abs{S_{\{k\in K:\abs{k_2}\geq k_0^2\}}}\\
&\leq C_{p_1}\norm{\varphi}_a\sum_{\{k\in K:\abs{k_2}\geq k_0^2\}}\abs{y}^{k_1}\abs{k_2}^{\text{deg}(p_1)}\norm{F^{k_1}Q^{k_2}v}^{-a}\\
&\leq C_{a,p_1}\norm{\varphi}_a\sum_{\{k\in K:\abs{k_2}\geq k_0^2\}}\abs{y}^{k_1}\rho^{-a\abs{k_1}}\abs{k_2}^{\text{deg}(p_1)}\abs{k_2k_0^{-1}}^{-\frac{1}{2}a}\norm{v}^{-a}\\
&\leq C_a\abs{k_2}^{-\frac{1}{4}a+\text{deg}(p_1)}\norm{\varphi}_a\norm{v}^{-a}\\
&\leq C_{a}\norm{\varphi}_a\norm{v}^{-a}
\end{align*}
for any $a>\max\{4(1+\text{deg}(p_1)),\,\abs{\log_\rho \abs{y}}\}$.

To estimate the third sum we use the additional assumption:
\begin{align*}
&\abs{S_{\{k\in K:k_1<\tau_0,\abs{k_2}\leq k_0^2\}}}\\
&\leq C_{p_1}\norm{\varphi}_a\sum_{\{k\in K:\abs{k_1}<\tau_0,\,\abs{k_2}\leq k_0^2\}}\abs{y}^{k_1}\abs{k_2}^{\text{deg}(p_1)}\norm{F^{k_1}Q^{k_2}v}^{-a}\\
&\leq C_{a,p_1}k_0^{2+2\text{deg}(p_1)}\max\{\abs{y},\,\abs{y}^{-1}\}^{\tau_0}\norm{\varphi}_a\sum_{\{k\in K:\abs{k_1}<\tau_0\}}p(\abs{k_1})^a\norm{v}^{-a}\\
&\leq C_ak_0^{2+2\text{deg}(p_1)}\tau_0\max\{\abs{y},\,\abs{y}^{-1}\}^{\tau_0}\tau_0^{a\text{deg}(p)}\norm{\varphi}_a\norm{v}^{-a}\\
&\leq C_{a,\delta}\norm{\varphi}_a\norm{v}^{-a+\kappa_1+\delta}
\end{align*}
for any $\delta>0$ and any
\begin{align*}
a>\kappa_1=(n_1+1)(4+4\text{deg}(p_1)+\abs{\log_\rho \abs{y}}).
\end{align*}
By combining the estimates obtained above we get the conclusion.
\end{proof}
The next result follows immediately from Lemma \ref{le:10}:
\begin{corollary}\label{cor:4}
Let $F$ and $Q$ be integer matrices in $GL(n,\ZZ)$. Suppose $F$ is ergodic and $Q$ is unipotent such that $FQ=QF$.
Then:
\begin{enumerate}

  \item \label{for:32}For any $C^{\infty}$ map $\varphi$ on the torus, any vector $v\in\ZZ^n$ satisfying $Qv\neq v$, any $y\in\RR$ and a polynomial $p_1$
the following sums:
\begin{align*}
  S_K(\varphi,v)(F,Q)=\sum_{k=(k_1,k_2)\in K}F^{-(k_1+1)}Q^{-(k_2+1)}\widehat{\varphi}_{F^{k_1}Q^{k_2}v}
\end{align*}
converge absolutely for any $K\subset\ZZ^2$.

\smallskip
  \item \label{for:25}Assume in addition that for any vector $v\in\ZZ^N$ and for
every $k=(k_1,k_2)\in K=K(v)\subset\ZZ^2$ we have $p(\abs{k_1})\norm{F^{k_1}Q^{k_2}v}\geq \norm{v}$ where $p$ is a polynomial, then
\begin{align*}
\norm{S_K(\varphi,v)(F,Q)}\leq C_{a}\norm{\varphi}_a\norm{v}^{-a+\kappa}
\end{align*}
for any $a>\kappa_{F,Q}=(n_1+1)(4+4n+\abs{\log_\rho \abs{y}})$, where $n_1=(2n+3)n$, $\rho>1$ is the growth rate in the hyperbolic direction corresponding to $F$ and
$y=\max\{\norm{F},\norm{F^{-1}}\}$.
\end{enumerate}
\end{corollary}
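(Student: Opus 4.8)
\textbf{Proof proposal for Corollary \ref{cor:4}.}

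The plan is to deduce both statements directly from Lemma \ref{le:10} by the same trick used in passing from Lemma \ref{le:7} to Corollary \ref{cor:2}, namely bounding the operator norms of the matrix prefactors $F^{-(k_1+1)}Q^{-(k_2+1)}$ by scalar quantities and then invoking the scalar estimates of Lemma \ref{le:10}. First I would observe that since $Q$ is unipotent, $\norm{Q^{-(k_2+1)}}\leq C(\abs{k_2}+1)^{n}$ by \eqref{for:39} applied to each Jordan block of the dual of $Q$; so the $Q$-part of the prefactor contributes only a polynomial factor $p_1(\abs{k_2})$ in $\abs{k_2}$ of degree $n$, which can be absorbed into the polynomial $p_1$ allowed in Lemma \ref{le:10}\eqref{for:21} and \eqref{for:22}. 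For the $F$-part, I would bound $\norm{F^{-(k_1+1)}}\leq \norm{F^{-1}}\cdot\max\{\norm{F},\norm{F^{-1}}\}^{\abs{k_1}}=C\, y^{\abs{k_1}}$ with $y=\max\{\norm{F},\norm{F^{-1}}\}$; the extra constant $\norm{F^{-1}}$ is harmless. Putting these together gives the termwise domination
\begin{align*}
 \norm{F^{-(k_1+1)}Q^{-(k_2+1)}\widehat{\varphi}_{F^{k_1}Q^{k_2}v}}\leq C\, y^{\abs{k_1}} (\abs{k_2}+1)^{n}\,\abs{\widehat{\varphi}_{F^{k_1}Q^{k_2}v}},
\end{align*}
so that $\norm{S_K(\varphi,v)(F,Q)}$ is bounded by the corresponding scalar sum $S_K$ from Lemma \ref{le:10} with the weight $y^{k_1}$ replaced by $y^{\abs{k_1}}$ (equivalently, running the argument with $\abs{\log_\rho|y|}$ in place of $\log_\rho|y|$, which only enlarges the admissible threshold) and $p_1$ taken of degree $n$.

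With this domination in hand, statement \eqref{for:32} follows at once from Lemma \ref{le:10}\eqref{for:21}: the scalar sum converges absolutely for $\varphi\in C^\infty$ because one may take $a$ arbitrarily large, and absolute convergence of a dominating scalar series gives absolute convergence of the vector-valued series. For statement \eqref{for:25}, I would invoke Lemma \ref{le:10}\eqref{for:22} under exactly the hypothesis stated there, $p(\abs{k_1})\norm{F^{k_1}Q^{k_2}v}\geq\norm{v}$ on $K(v)$, to get $\abs{S_K}\leq C_a\norm{\varphi}_a\norm{v}^{-a+\kappa}$; substituting $\mathrm{deg}(p_1)=n$ into the formula $\kappa_{F,Q}=(n_1+1)(4+4\,\mathrm{deg}(p_1)+\abs{\log_\rho|y|})$ from Lemma \ref{le:10} produces exactly the claimed exponent $\kappa_{F,Q}=(n_1+1)(4+4n+\abs{\log_\rho|y|})$ with $n_1=(2n+3)n$ and $y=\max\{\norm{F},\norm{F^{-1}}\}$. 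The loss of regularity and all constants are inherited verbatim from Lemma \ref{le:10}.

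I do not expect a serious obstacle here; the corollary is a packaging statement. The only point requiring minor care is bookkeeping the substitution $y^{k_1}\rightsquigarrow y^{\abs{k_1}}$ and the shift $k_1\mapsto k_1+1$, $k_2\mapsto k_2+1$, which change the relevant sums only by a fixed multiplicative constant (a power of $y$ and of $\max\{1,\norm{Q^{-1}}\}$) that is independent of $v$ and $\varphi$ and hence does not affect the shape $\norm{v}^{-a+\kappa}$ of the bound. One should also note that the hypothesis $Qv\neq v$ is what makes the base estimate \eqref{for:17} of Lemma \ref{le:10} available, and this hypothesis is carried over into the statement of the corollary, so no new input about the orbit structure is needed.
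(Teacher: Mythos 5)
Your proposal is correct and follows essentially the same route as the paper: bound $\norm{Q^{-(k_2+1)}}$ by a polynomial of degree $n$ in $\abs{k_2}$ (unipotence), bound $\norm{F^{-(k_1+1)}}$ by $C\,y^{\abs{k_1}}$, obtain termwise domination by the scalar sum of Lemma \ref{le:10} with $p_1$ of degree $n$, and invoke Lemma \ref{le:10}\eqref{for:21} and \eqref{for:22} to conclude. The bookkeeping remarks about the index shift and $y^{k_1}\rightsquigarrow y^{\abs{k_1}}$ match the paper's use of $\abs{\log_\rho\abs{y}}$ and affect only a harmless multiplicative constant.
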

\begin{proof}
Since
\begin{align*}
  &\sum_{k=(k_1,k_2)\in K}\norm{F^{-(k_1+1)}Q^{-(k_2+1)}\widehat{\varphi}_{F^{\abs{k_1}+1}Q^{k_2}v}}\\
  &\leq C_{F,Q}\max\{\norm{F},\norm{F^{-1}}\}^{k_1}(\abs{k_2}+1)^{n}\sum_{k=(k_1,k_2)\in K}\norm{\widehat{\varphi}_{F^{k_1}Q^{k_2}v}}
\end{align*}
the conclusion follows immediately from Lemma \ref{le:10}. Here we used the fact that $Q$ has polynomial increasing speed, i.e.,
$\norm{Q^{k_2}}\leq C_Q(\abs{k_2}+1)^n$ for any $k_2\in\ZZ$.
\end{proof}

\subsection{Twisted coboundary equation of a map over automorphism on torus} Obstructions to solving a one-cohomology equation for a function over an ergodic toral automorphism
in $C^\infty$ category are sums of Fourier coefficients of the given function
along a dual orbit of the automorphism. This is the content of the Lemma 4.2 in \cite{Damjanovic4}. The same characterization holds however for
one-cohomology equation for a map over
ergodic toral automorphisms as well due to the estimate in Corollary \ref{cor:2}. The proofs of the two lemmas below follow closely the proof of Lemma 4.2 in \cite{Damjanovic4}
for solving a one-cohomology equation for functions. Details of the proofs can be found in \cite{wang}.

\begin{lemma}\label{le:8}
Let $P$ and $Q$ are integer matrices in $GL(N,\ZZ)$ and $Q$ is ergodic. For a map $\theta$ on $\TT^N$, if there exists a $C^{\infty}$  map $\omega$ which is $C^0$ small enough on $\TT^N$ such that
\begin{align}\label{for:18}
 P\omega-\omega\circ Q=\theta,
\end{align}
then  the following sums along all nonzero dual orbits are zero, i.e.,
\begin{align*}
\sum_{i=-\infty}^\infty P^{-(i+1)}\hat{\theta}_{Q^iv}=0,\qquad \forall v\neq0.
\end{align*}
\end{lemma}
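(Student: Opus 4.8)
The plan is to pass to Fourier coefficients on both sides of the twisted coboundary equation \eqref{for:18} and then iterate the resulting recursion along a dual $Q$-orbit. Recall from \eqref{for:110} that for $F\in SL(N,\ZZ)$ one has $(\widehat{\theta\circ F})_n=\widehat{\theta}_{(F^\tau)^{-1}n}$, and that we suppress the dual-map notation, writing $Q$ for the dual action on $\ZZ^N$. Taking the $v$-th Fourier coefficient of $P\omega-\omega\circ Q=\theta$ componentwise yields, for every $v\in\ZZ^N$,
\begin{align*}
 P\,\widehat{\omega}_v-\widehat{\omega}_{Qv}=\widehat{\theta}_v.
\end{align*}
Here I am using that $P$ is a constant integer matrix, so $\widehat{(P\omega)}_v=P\,\widehat{\omega}_v$, and that $\widehat{(\omega\circ Q)}_v=\widehat{\omega}_{Qv}$.

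Next I would fix a nonzero $v\in\ZZ^N$ and unwind the recursion along the full dual $Q$-orbit $\{Q^iv:i\in\ZZ\}$. Rewriting the identity at the point $Q^iv$ as $\widehat{\omega}_{Q^{i+1}v}=P\,\widehat{\omega}_{Q^iv}-\widehat{\theta}_{Q^iv}$ and applying $P^{-(i+1)}$, one gets the telescoping relation
\begin{align*}
 P^{-(i+1)}\widehat{\omega}_{Q^{i+1}v}-P^{-i}\widehat{\omega}_{Q^iv}=-P^{-(i+1)}\widehat{\theta}_{Q^iv}.
\end{align*}
Summing this over $i$ from $-M$ to $M$ collapses the left-hand side to $P^{-(M+1)}\widehat{\omega}_{Q^{M+1}v}-P^{M}\widehat{\omega}_{Q^{-M}v}$, so that
\begin{align*}
 \sum_{i=-M}^{M}P^{-(i+1)}\widehat{\theta}_{Q^iv}=P^{M}\widehat{\omega}_{Q^{-M}v}-P^{-(M+1)}\widehat{\omega}_{Q^{M+1}v}.
\end{align*}
It remains to show the right-hand side tends to $0$ as $M\to\infty$; granting that, letting $M\to\infty$ gives $\sum_{i=-\infty}^{\infty}P^{-(i+1)}\widehat{\theta}_{Q^iv}=0$, which is the claim, and the absolute convergence of this two-sided sum is itself part of what must be checked.

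The main obstacle — really the only substantive point — is controlling the two boundary terms $P^{M}\widehat{\omega}_{Q^{-M}v}$ and $P^{-(M+1)}\widehat{\omega}_{Q^{M+1}v}$. Since $\omega$ is $C^\infty$, its Fourier coefficients decay faster than any polynomial in $\abs{Q^{\pm M}v}$; since $Q$ is ergodic, Remark \ref{re:1} together with the expansion estimates \eqref{for:4} force $\abs{Q^{i}v}$ to grow at least exponentially in $\abs{i}$ in the hyperbolic directions for $v\neq 0$ (one splits $v$ by the $Q$-invariant decomposition $\RR^N=V_1(Q)\oplus V_2(Q)\oplus V_3(Q)$ and uses that $\norm{\pi_1 v},\norm{\pi_3 v}\geq C_Q\norm{v}^{-N}$). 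On the other hand $\norm{P^{M}}$ grows at most exponentially in $M$. Choosing the regularity exponent $a$ (in the sense of $\norm{\cdot}_a$ from \eqref{for:110}) large enough that the super-polynomial decay of $\widehat{\omega}$ beats the exponential growth of $\norm{P^{\pm M}}$, one concludes that both boundary terms vanish in the limit; the same estimate, applied to the partial sums, gives absolute convergence. This is exactly the mechanism in the proof of Lemma 4.2 of \cite{Damjanovic4} (for scalar $\omega$), and for maps it is powered by the norm comparison in Corollary \ref{cor:2}; the componentwise argument here is identical, so I would simply invoke that reference for the routine decay bookkeeping and record the telescoping identity above as the heart of the matter.
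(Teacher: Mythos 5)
Your proof is correct and is exactly the argument the paper intends: the paper does not spell out the proof but says it ``follows closely the proof of Lemma 4.2 in \cite{Damjanovic4}'' with details in \cite{wang}, and that argument is precisely the Fourier-coefficient telescoping you carry out, with the boundary terms controlled by super-polynomial decay of $\widehat{\omega}$ against exponential growth of $\abs{Q^{\pm M}v}$ (via Remark \ref{re:1}). You correctly flag that absolute convergence and the vanishing of the boundary terms are the substantive points, and the estimate $\abs{\widehat{\omega}_{Q^iv}}\leq C_a\abs{Q^iv}^{-a}\leq C_a' e^{-a\tau\abs{i}}\norm{v}^{Na}$ with $a$ chosen large enough to dominate $\norm{P^{\pm M}}$ closes both.
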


\begin{lemma}\label{le:5}
 Let $P$ and $Q$ be ergodic integer matrices in $SL(N,\ZZ)$.  Let $\theta$ be a $C^\infty$ map on the torus which is $C^\sigma$ small enough, where $\sigma>N+2+\kappa_{P,Q}$ ($\kappa_{P,Q}$ is defined in \eqref{for:34} of Corollary \ref{cor:2}).  If for all nonzero $v\in\ZZ^N$, the following sums along the dual orbits are zero, i.e.,
\begin{align*}
\sum_{i=-\infty}^{\infty}P^{-(i+1)}\widehat{\theta}_{Q^iv}=0,\qquad \forall v\neq 0.
\end{align*}
Then the equation
\begin{align}\label{for:5}
  P\omega-\omega\circ Q=\theta
\end{align}
has a $C^\infty$ solution $\omega$, and the following estimate:
\begin{align*}
  \norm{\omega}_{a}\leq C_a\norm{\theta}_{a+\sigma_1},\qquad \forall a\geq 0.
\end{align*}
where $\sigma_1>\kappa_{P,Q}$.
\end{lemma}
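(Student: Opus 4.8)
The plan is to mirror the classical argument for solving the cohomological equation over a single ergodic toral automorphism (Lemma 4.2 in \cite{Damjanovic4}), upgraded to the setting of a \emph{map} $\theta:\TT^N\to\RR^N$ twisted by a second matrix $P$, using the convergence and tame estimates already packaged in Corollary \ref{cor:2}. First I would pass to Fourier coefficients: writing $\theta=\sum_{v}\widehat\theta_v e_v$, a candidate solution of $P\omega-\omega\circ Q=\theta$ is forced, on each dual $Q$-orbit through a vector $v$, to have coefficients determined by a telescoping sum. Concretely, since $(\widehat{\omega\circ Q})_n=\widehat\omega_{Q^{-1}n}$ in the dual notation of \eqref{for:110}(iii), the equation reads $P\widehat\omega_n-\widehat\omega_{Qn}=\widehat\theta_n$ for every $n\in\ZZ^N$; fixing a representative $v$ on a nonzero orbit and iterating forwards and backwards, one obtains the formal expression
\begin{align*}
 \widehat\omega_{Q^mv}=-\sum_{i\geq m}P^{-(i-m+1)}\widehat\theta_{Q^iv}
 =\sum_{i<m}P^{\,m-i-1}\widehat\theta_{Q^iv},
\end{align*}
and the hypothesis $\sum_{i=-\infty}^\infty P^{-(i+1)}\widehat\theta_{Q^iv}=0$ is exactly the compatibility condition that makes these two one-sided sums agree (after multiplying by $P^m$). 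On the zero orbit one sets $\widehat\omega_0=0$ (here $P\omega-\omega\circ Q$ already kills the constant term, since $P$ and $Q$ are unimodular and the zeroth Fourier coefficient of $\theta$ must vanish by integrating the equation — or one simply absorbs it, as $\theta$ is $C^0$-small).

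Next I would establish convergence and the tame estimate. For this I split each orbit $\{Q^iv\}$ at its ``lowest point'': choosing for each nonzero orbit the minimal representative $v$ with $v\hookrightarrow 3(Q)$ and $Qv\rightarrow 1,2(Q)$ (cf. \eqref{for:105}), one has the key orbit-growth bound $p(\abs i)\,\norm{Q^i v}\geq \norm v$ for a fixed polynomial $p$ — this is precisely condition \eqref{for:96} needed to invoke Corollary \ref{cor:2}(2). Applying Corollary \ref{cor:2} to $\varphi=\theta$ with the pair $(P,Q)$ — using the expanding side for the forward tail $i\geq m$ and the contracting side (i.e. running the argument with $P^{-1},Q^{-1}$) for the backward tail — gives
\begin{align*}
 \abs{\widehat\omega_{Q^mv}}\leq C_{a}\norm{\theta}_a\,\norm{Q^mv}^{-a+\kappa_{P,Q}},
\end{align*}
uniformly over the orbit, for every $a>\kappa_{P,Q}$. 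Since every $n\in\ZZ^N\setminus 0$ equals $Q^mv$ for exactly one orbit representative $v$ and one $m$, this yields $\abs{\widehat\omega_n}\leq C_a\norm{\theta}_a\,\abs n^{-a+\kappa_{P,Q}}$ for all $n$, i.e. $\norm{\omega}_{a-\kappa_{P,Q}}\leq C_a\norm{\theta}_a$; by part (ii) of \eqref{for:110} (the comparison between the $\norm{\cdot}_a$ norms and the $C^r$ norms, at the cost of $\sigma>N+1$) this upgrades to $\omega\in C^\infty$ whenever $\theta$ is, and to the stated estimate $\norm{\omega}_a\leq C_a\norm{\theta}_{a+\sigma_1}$ with $\sigma_1>\kappa_{P,Q}$. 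One still has to check that the function $\omega$ so defined genuinely solves \eqref{for:5}: this is immediate on the Fourier side from the telescoping identity, and the regularity just proved justifies rearranging the sums, so $P\omega-\omega\circ Q$ and $\theta$ have the same Fourier coefficients and hence coincide.

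The main obstacle is the uniformity of the orbit-growth estimate and the bookkeeping of where each $Q^mv$ sits relative to the minimal point: one must verify that splitting the bi-infinite sum at the minimal representative really does put \emph{every} term of both one-sided tails into the regime covered by \eqref{for:96}, so that Corollary \ref{cor:2} applies with a \emph{single} polynomial $p$ valid along the whole orbit, and that the constant $C_a$ does not degenerate as the orbit varies. This is exactly the point where Remark \ref{re:1} is used — the projections $\pi_1v,\pi_3v$ of any nonzero integer vector onto the expanding/contracting subspaces of $Q$ are bounded below by $C_Q\norm v^{-N}$ — which prevents the orbit from lingering near the neutral directions and forces the clean exponential growth in $\abs{i-m}$ away from the minimal point; the polynomial loss $\abs i^{-N}$ from the neutral block of $Q$ (see \eqref{for:4}) is harmless because it only inflates $\kappa_{P,Q}$ by a bounded amount. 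Once this is in hand, everything else is the routine Fourier-analytic packaging already carried out in \cite{Damjanovic4} and \cite{wang}, so I would simply cite those for the remaining details.
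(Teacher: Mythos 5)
Your proposal follows exactly the route the paper itself indicates (it defers Lemma \ref{le:5}'s proof to Lemma 4.2 of \cite{Damjanovic4} and to \cite{wang}): solve on the Fourier side orbit by orbit, split each dual $Q$-orbit at its minimal point, and invoke Corollary \ref{cor:2} together with the lower bound of Remark \ref{re:1} to get the tame estimate. One small slip: the constant term is not killed and need not vanish; rather, since $P$ is ergodic $1$ is not an eigenvalue of $P$, so one sets $\widehat\omega_0=(P-I)^{-1}\widehat\theta_0$, which is bounded and harmless.
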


\section{Construction of the projection for action $\alpha$ when $\mathcal{H}$ is nilpotent}\label{sec:2}
Set $c=\max\{\norm{B}, \norm{B^{-1}}\}$ and $l>[\max\{N_1(c), N_2(c)\}]+1$ where $N_1(c)$ and $N_2(c)$ are defined in Lemma \ref{le:9} and Corollary \ref{cor:3} respectively. Let $\mathcal{B}=B$ and $\mathcal{A}=A^{l}$. We will use ergodic element $\mathcal{A}$ instead of $A$ to carry out KAM scheme.

The crucial step in proving Theorem \ref{th:1} is Proposition \ref{po:2}. The basic idea is as follows: we can make a reduction to consider $R_{g_1}$ map ($\mathcal{A}=\alpha(g_1)$) with $(\widehat{R_{g_1}})_v=0$ if $v\notin E_\mathcal{A}$. Once only the $E_\mathcal{A}$-Fourier coefficients are mattered for $R_{g_1}$,  $(\widehat{R_{d_i}})_v$ ($0\leq i\leq\mathfrak{n}$) (see Step $3$ of Section \ref{sec:1}) are very small if $v$ are ``far from" $E_\mathcal{A}$. Hence we can center on finite points to compute the obstruction.

In the proof of next proposition, Lemma \ref{cor:3} will be used frequently to simplify computation.

\begin{proposition}\label{po:2}
For the cocycle difference equation \eqref{for:57}, set $\norm{\mathcal{L}}_a=\max_{0\leq i\leq\mathfrak{n}}\{\norm{\mathcal{L}(g_1,d_i)}_a\}$ (see Step $3$ of Section \ref{sec:1}). If $v\in E_{\mathcal{A}}(v)$, then
\begin{align}\label{for:66}
 &\Big|\sum_j \mathcal{B}\mathcal{A}^{-(j+1)}(\widehat{R_{g_1}})_{\mathcal{A}^jv}-\sum_j \mathcal{A}^{-(j+1)}(\widehat{R_{g_1}})_{\mathcal{A}^j\mathcal{B}v}\Big|\notag\\
 &\leq C_a\norm{\mathcal{L}}_a\abs{v}^{-a+(\mathfrak{n}+1)\sigma}.
\end{align}
where $\sigma>\max_{0\leq i\leq\mathfrak{n}}\{\kappa_{\mathcal{A},\mathcal{A}\overline{d_i}}\}$ ($\kappa_{\mathcal{A},\mathcal{A}\overline{d_i}}$ is as defined in Corollary \ref{cor:2}) and $a>(\mathfrak{n}+1)\sigma$

\end{proposition}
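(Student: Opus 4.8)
The plan is to compare the two one-sided sums in \eqref{for:66} by exploiting the cocycle difference functions $\mathcal{L}(g_1,d_i)$ together with the orbit-geometry lemmas \ref{le:9} and \ref{cor:3}. Recall that the summand $\mathcal{B}\mathcal{A}^{-(j+1)}(\widehat{R_{g_1}})_{\mathcal{A}^jv}$ is, after summing over $j$, exactly the obstruction to solving the twisted equation $\mathcal{B}\omega-\omega\circ\mathcal{A}=R_{g_1}$ along the dual $\mathcal{A}$-orbit of $v$; the second sum is the same obstruction evaluated along the orbit of $\mathcal{B}v$. If $R$ were an honest twisted cocycle over $\alpha$, the commutator relation $g_1 d_0 = d_0 g_1 \overline{D_1(g_1,d_0)}\cdots$ would force these two orbit sums to coincide; the whole point is that $R$ is only a cocycle \emph{up to} $\mathcal{L}$, so the defect is controlled by $\norm{\mathcal{L}}_a$. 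Concretely, I would iterate the cocycle relation: writing out $\mathcal{L}(g_1,d_i)$ for $0\le i\le\mathfrak{n}$ and telescoping through the chain $d_0,d_1=D_1(g_1,d_0),\dots,d_{\mathfrak{n}}$, one expresses the difference of the two orbit sums as a finite linear combination (with matrix coefficients that are products of the $\overline{d_i}$ and powers of $\mathcal{A}$) of Fourier coefficients of the $\mathcal{L}(g_1,d_i)$, evaluated along orbits of the form appearing in the operators $A(n,l_1,l_2,l_3)_{i_1,i_2}^{j_1,j_2,j_3}$ of Lemma \ref{cor:3}.

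\textbf{Carrying out the comparison.} First I would fix $v=\mathcal{M}_{\mathcal{A}}(v)\in E_{\mathcal{A}}$ and, using Lemma \ref{le:9} (whose hypotheses hold because $l>N_1(c)$), record that $\mathcal{M}_{\mathcal{A}}(\overline{d_i}\,v)$ and $\mathcal{M}_{\mathcal{A}}(\mathcal{B}v)$ differ from their naive positions by at most one step of $\mathcal{A}$. Second, I would substitute the expansion of $R_{g_1}$ on the orbit of $\mathcal{B}v$ by re-expressing $\overline{d_0}=\mathcal{B}$, using the commutator identities in \eqref{for:103} to move $\mathcal{B}$ past $\mathcal{A}$ at the cost of the $\overline{d_i}$'s, exactly as in the reduction to the system \eqref{for:104}. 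This produces, term by term, the defect $\sum_i (\text{coefficient})\cdot(\widehat{\mathcal{L}(g_1,d_i)})_{w}$ where each $w$ lies on an orbit on which, by Lemma \ref{cor:3}, the relevant vector is at most one or two $\mathcal{A}$-steps away from $E_{\mathcal{A}}$; in particular the geometry of Lemma \ref{cor:3} guarantees that the dual orbits stay in the polynomially-controlled regime, so that Corollary \ref{cor:2} (items \eqref{for:34} and \eqref{for:27}) applies with the exponent $\kappa_{\mathcal{A},\mathcal{A}\overline{d_i}}$. Third, I would apply the norm bound from Corollary \ref{cor:2}: each of the $O(\mathfrak{n})$ orbit sums of $\mathcal{L}(g_1,d_i)$ is bounded by $C_a\norm{\mathcal{L}(g_1,d_i)}_a\abs{v}^{-a+\kappa_{\mathcal{A},\mathcal{A}\overline{d_i}}}$, and since each step in the telescoping may cost one factor of $\sigma$ in the regularity loss (there are at most $\mathfrak{n}+1$ generators $d_i$ involved), summing these yields the claimed bound $C_a\norm{\mathcal{L}}_a\abs{v}^{-a+(\mathfrak{n}+1)\sigma}$, valid for $a>(\mathfrak{n}+1)\sigma$.

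\textbf{Main obstacle.} The delicate point is \emph{not} the estimate itself — once the defect is written as a sum of $\mathcal{L}$-orbit sums, Corollary \ref{cor:2} does the rest — but rather the bookkeeping that produces that representation: one must verify that when $\mathcal{B}$ is commuted through powers of $\mathcal{A}$ using the relations $d_{i+1}=D_{i+1}(g_1,d_0)$, \emph{every} residual term is either a genuine coboundary term that cancels against its partner in the other orbit sum, or is a Fourier coefficient of some $\mathcal{L}(g_1,d_i)$ evaluated at a point that Lemma \ref{cor:3} certifies lies within a bounded number of $\mathcal{A}$-steps of $E_{\mathcal{A}}$. The fact that all $\overline{d_i}$ are unipotent (Proposition \ref{po:1}\eqref{for:43}, Remark \ref{re:3}) and hence grow only polynomially is exactly what keeps those coefficient matrices from spoiling the exponential decay coming from $\mathcal{A}$ in the hyperbolic directions; this is where the choice $l>\max\{N_1(c),N_2(c)\}$ is used, to absorb the polynomial factors. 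I expect the proof to proceed by an induction on the length $\mathfrak{n}$ of the lower central series, the inductive step being precisely one application of the commutator identity $d_{i+1}=D_{i+1}(g_1,d_0)$ combined with the already-established case for the shorter chain.
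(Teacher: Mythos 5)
Your high-level strategy is aligned with the paper's: telescope through the chain $d_0,d_1,\dots,d_{\mathfrak{n}}$ using the relations $\mathcal{A}\overline{d_i}=\overline{d_i}\mathcal{A}\overline{d_{i+1}}$, control the residual terms by Fourier-orbit sums of the $\mathcal{L}(g_1,d_i)$, and invoke Lemma \ref{le:9} and Lemma \ref{cor:3} to keep the relevant dual orbit points near $E_{\mathcal{A}}$. But there is a genuine gap: you declare the bookkeeping to be "the delicate point" and then do not carry it out, and in particular you omit the two reductions that actually make the computation finite.

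First, the paper begins by normalizing $R_{g_1}$ so that $(\widehat{R_{g_1}})_u=0$ whenever $u\notin E_{\mathcal{A}}$. This is done by subtracting $\Delta_{\mathcal{A}}\Omega$ from every $R_x$, where $\Omega$ is obtained from Lemma \ref{le:5} applied to $R_{g_1}-\mathcal{R}R_{g_1}$, and then observing that \eqref{for:57} is unchanged under this substitution. Without this normalization, the orbit sums $\sum_j\mathcal{A}^{-(j+1)}(\widehat{R_{g_1}})_{\mathcal{A}^j\overline{d_i}v}$ do not concentrate at the handful of points that Lemma \ref{cor:3} catalogues; the whole reduction of an infinite sum to the finite quantity $J_i(v)$ rests on \eqref{for:56}. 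Your proposal never says that the proof can assume $\widehat{R_{g_1}}$ is supported on $E_{\mathcal{A}}$, which is what makes the iterate formulas collapse to a bounded number of terms.

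Second, and equally essential, the paper constructs auxiliary maps $\varphi_i$ (definition \eqref{for:62}) via one-sided iteration of \eqref{for:60} with respect to $\mathcal{A}\overline{d_{i+1}}$, and proves by downward induction in $i$ both that $(\widehat{R_{d_i}})_u=(\widehat{\varphi_i})_u$ when $\mathcal{A}^n u\in E_{\mathcal{A}}$ with $n\ge 1$ or $n\le -2$, and the tame bound $\norm{\varphi_i}_a\le C_a\norm{\mathcal{L}}_{a+(\mathfrak{n}-i+1)\sigma}$. This controls the coefficients $(\widehat{R_{d_i}})_u$ \emph{away} from $E_{\mathcal{A}}$, which is indispensable: the intermediate sums in the telescope involve exactly such off-$E_{\mathcal{A}}$ points. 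Your proposal treats every residual term as a coefficient of some $\mathcal{L}(g_1,d_i)$, but terms of the form $\overline{d_i}\mathcal{A}(\widehat{R_{d_{i+1}}})_{(\mathcal{A}\overline{d_{i+1}})^{j}v}$ are not Fourier coefficients of $\mathcal{L}$ — they are coefficients of $R_{d_{i+1}}$ itself, and they only become controllable after the $\varphi_i$ machinery converts them, inductively, into $\mathcal{L}$-sized quantities. Until that conversion is done the estimate you claim "Corollary \ref{cor:2} does the rest" is not yet available. So the proposal identifies the shape of the argument correctly but leaves out the two mechanisms — the $E_{\mathcal{A}}$-support normalization and the $\varphi_i$ induction — without which the claimed representation of the defect as a sum of $\mathcal{L}$-orbit sums cannot be produced.
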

\begin{proof}
The following facts will be used frequently in the proof:
\begin{enumerate}
\item condition \eqref{for:9} is satisfied for any $\mathcal{A}\overline{d_i}$ and $v\in\ZZ^N\backslash0$ on $K=\{(k_1,0),k_1\in\ZZ\}$, $0\leq i\leq\mathfrak{n}$ (see \eqref{cor:5} and Remark \ref{re:1}).

\noindent

\smallskip
  \item $\sum_j\norm{\mathcal{A}^j\widehat{f}_{(\mathcal{A}\overline{d_i})^jv}}<\infty$ ($0\leq i\leq\mathfrak{n}$) where $f$ is a smooth map and $v\in\ZZ^N\backslash0$ (see Corollary \ref{cor:2}).

  \smallskip
  \item \label{for:45}if $\mathcal{A}^nv\in E_\mathcal{A}$, $n\geq 0$ (resp. $n\leq0$), then on $K^-=\{(k_1,0)\in\ZZ:k_1\leq 0\}$ (resp. $K^+=\{(k_1,0)\in\ZZ:k_1\geq0\}$) the condition \eqref{for:96} is satisfied for any $\mathcal{A}\overline{d_i}$ and $v$ on $K^-$ (resp. $K^+$) (see Proposition \ref{po:1}).

      \smallskip
      \item Using $\mathcal{A}\overline{d_i}=\overline{d_i}\mathcal{A}\overline{d_{i+1}}$, $0\leq i\leq \mathfrak{n}$, where $d_{\mathfrak{n}+1}=e$, equation \eqref{for:57} has following forms for the pairs $(\mathcal{A},\,\overline{d_i})$:
\begin{align}
  &\mathcal{A}R_{d_i}-R_{d_i}\circ \mathcal{A}\overline{d_{i+1}}\notag\\
  &=\overline{d_i}R_{g_1}\circ \overline{d_{i+1}}-R_{g_1}\circ \overline{d_i}+\overline{d_i}AR_{d_{i+1}}+\mathcal{L}(g_1,d_i); \label{for:60}
  \end{align}
 for $0\leq i\leq \mathfrak{n}$, where $R_{d_{\mathfrak{n}+1}}=0$. We will be focus on these relations.
 \end{enumerate}
Next, we proceed to the proof.

\noindent\textbf{Step 1}: Reduction to prove for the``reminder" maps.

Let $\mathcal{R}R_{g_1}=\sum_{u\in\ZZ^N}(\widehat{\mathcal{R}R_{g_1}})_ue_u$ where
\begin{align}\label{for:3}
(\widehat{\mathcal{R}R_{g_1}})_u&\stackrel{\rm def}{=}\left\{\begin{aligned} &\sum_{i\in\ZZ}\mathcal{A}^{-i}(\widehat{\mathcal{R}R_{g_1}})_{\mathcal{A}^iu},\qquad &u\in E_{\mathcal{A}},\\
&0, \qquad &\text{otherwise}
\end{aligned}
 \right.
\end{align}
for $u\neq 0$ and $(\widehat{\mathcal{R}R_{g_1}})_0\stackrel{\rm def}{=}0$.

Application of \eqref{for:34} of Corollary \ref{cor:2} shows that
\begin{align}\label{for:31}
 \norm{\mathcal{R}R_{g_1}}_{a}\leq C_a\norm{R_{g_1}}_{a+\sigma_1},\qquad \forall a\geq 0.
\end{align}
where $\sigma_1>\kappa_{\mathcal{A},\mathcal{A}}$.

Since $R_{g_1}-\mathcal{R}R_{g_1}$ satisfies the solvable condition in Lemma \ref{le:5}, by using Lemma \ref{le:5} there is a $C^\infty$ function $\Omega$ such that
\begin{align}\label{for:30}
 \Delta_\mathcal{A}\Omega=R_{g_1}-\mathcal{R}R_{g_1}
\end{align}
(see \eqref{for:46} of Section \ref{sec:1}) with estimates
\begin{align}\label{for:16}
 \norm{\Omega}_{a}\leq C_a\norm{R_{g_1}-\mathcal{R}R_{g_1}}_{a+\sigma_1}\leq C_a\norm{R_{g_1}}_{a+2\sigma_1}, \quad\forall a\geq 0.
\end{align}
Let
\begin{align*}
 \mathcal{R}R_{x}=R_{x}-\Delta_{\bar{x}}\Omega,\qquad \forall x\in\mathcal{H}.
\end{align*}
It is easy to check that if we substitute $R_x$ by $\mathcal{R}R_{x}$, for any $x\in\mathcal{H}$
then equation \eqref{for:57} is also satisfied by these ``remainder" maps. This shows that we can just prove the conclusion for assuming that $R_{g_1}$ satisfies the condition:
\begin{align}\label{for:56}
 (\widehat{R_{g_1}})_u=0,\qquad \text{if } u\notin E_\mathcal{A}.
\end{align}

\smallskip
\noindent{\textbf{Step $2$}: \emph{Reduction to  maps concentrated near $E_\mathcal{A}$ }}. In this part we want to show: if $\mathcal{A}^nu\in E_\mathcal{A}$ for $n\geq 1$ or $n\leq-2$, then
\begin{align}\label{for:72}
 \abs{(\widehat{R_{d_{i}}})_u}\leq C_a\norm{\mathcal{L}}_a\norm{u}^{-a+(\mathfrak{n}+1)\sigma},
\end{align}
for any $a>(\mathfrak{n}+1)\sigma$, $i\leq \mathfrak{n}$.

We define $\varphi_{i}=\sum_{u\in\ZZ^N}(\widehat{\varphi_{i}})_ue_u$, $0\leq i\leq \mathfrak{n}$:
\begin{align}\label{for:62}
(\widehat{\varphi_{i}})_u&\stackrel{\rm def}{=}\left\{\begin{aligned} &-\sum_{j\leq-1}\mathcal{A}^{-(j+1)}(\widehat{\mathcal{L}_i})_{(\mathcal{A}\overline{d_{i+1}})^ju},\quad &\mathcal{A}^nu\in E_\mathcal{A}, n&\geq 1,\\
&\sum_{j\geq 0}\mathcal{A}^{-(j+1)}(\widehat{\mathcal{L}_i})_{(\mathcal{A}\overline{d_{i+1}})^ju},\quad &\mathcal{A}^nu\in E_\mathcal{A}, n&\leq-2,\\
&0, \quad &\text{otherwise}
\end{aligned}
 \right.
\end{align}
where
\begin{align}\label{for:47}
 \mathcal{L}_i=\mathcal{L}(g_1,d_{i})+\overline{d_i}\mathcal{A}R_{d_{i+1}}.
\end{align}
\eqref{for:45} shows that $\varphi_i$,  $0\leq i\leq \mathfrak{n}$ are $C^\infty$ maps. Furthermore, $R_{\mathfrak{n}+1}=0$ implies:
\begin{align}\label{for:71}
 \norm{\varphi_{\mathfrak{n}}}_{a}\leq C_a\norm{\mathcal{L}(g_1,d_{\mathfrak{n}})}_{a+\sigma_1},\qquad \forall a\geq 0.
\end{align}
For each $0\leq i\leq \mathfrak{n}$, by iterating \eqref{for:60} backwards with respect to $\mathcal{A}\overline{d_{i+1}}$ we get
\begin{align}
  &-\sum_{j\leq-1}\mathcal{A}^{-(j+1)}\overline{d_{i}}(\widehat{R_{g_1}})_{\overline{d_{i+1}}(\mathcal{A}\overline{d_{i+1}})^ju}
  +\sum_{j\leq-1}\mathcal{A}^{-(j+1)}(\widehat{R_{g_1}})_{\overline{d_{i}}(\mathcal{A}\overline{d_{i+1}})^ju}\notag\\
  &=(\widehat{R_{d_{i}}})_u+\sum_{j\leq-1}\mathcal{A}^{-(j+1)}(\widehat{\mathcal{L}_i})_{(\mathcal{A}\overline{d_{i+1}})^ju}\label{for:48}\\
  &=(\widehat{R_{d_{i}}})_u-(\widehat{\varphi_{i}})_u\notag.
  \end{align}
By Lemma \ref{cor:3} and \eqref{for:56} we get
\begin{align*}
  (\widehat{R_{d_{i}}})_u=(\widehat{\varphi_{i}})_u,\qquad \text{if }\mathcal{A}^nu\in E_\mathcal{A},\,n\geq 1.
\end{align*}
By iterating \eqref{for:60} with respect to $\mathcal{A}\overline{d_{i+1}}$ we get
\begin{align}\label{for:52}
  &\sum_{j\geq0}\mathcal{A}^{-(j+1)}\overline{d_{i}}(\widehat{R_{g_1}})_{\overline{d_{i+1}}(\mathcal{A}\overline{d_{i+1}})^ju}
  -\sum_{j\geq0}\mathcal{A}^{-(j+1)}(\widehat{R_{g_1}})_{\overline{d_{i}}(\mathcal{A}\overline{d_{i+1}})^ju}\notag\\
  &=(\widehat{R_{d_{i}}})_u-\sum_{j\geq0}\mathcal{A}^{-(j+1)}(\widehat{\mathcal{L}_i})_{(\mathcal{A}\overline{d_{i+1}})^ju}\\
  &=(\widehat{R_{d_{i}}})_u-(\widehat{\varphi_{i}})_u\notag.
  \end{align}
By Lemma \ref{cor:3} and \eqref{for:56}, we get
\begin{align*}
  (\widehat{R_{d_{i}}})_u=(\widehat{\varphi_{i}})_u,\qquad \text{if }\mathcal{A}^nu\in E_\mathcal{A},\,n\leq-2.
\end{align*}
Hence,
\begin{align}\label{for:65}
  (\widehat{R_{d_{i}}})_u=(\widehat{\varphi_{i}})_u,\qquad \text{if }\mathcal{A}^nv\in E_\mathcal{A},\,n\geq1\text{ or }n\leq-2.
\end{align}
This shows that
\begin{align}\label{for:112}
  (\widehat{\mathcal{L}_i})_u&=\widehat{\mathcal{L}(g_1,d_{i})}_u+\overline{d_i}\mathcal{A}(\widehat{R_{d_{i+1}}})_u
  =\widehat{\mathcal{L}(g_1,d_{i})}_u+\overline{d_i}\mathcal{A}(\widehat{\varphi_{i+1}})_u,
\end{align}
if $\mathcal{A}^nu\in E_\mathcal{A}$, $n\geq 1$ or $n\leq-2$.

Hence by using Corollary \ref{cor:2} it follows from \eqref{for:62} and \eqref{for:112} that
\begin{align*}
 \norm{\varphi_{i}}_{a}&\leq C_a\norm{\mathcal{L}(g_1,d_{i})+d_i\mathcal{A}\varphi_{i+1}}_{a+\sigma}\notag\\
 &\leq C_a(\norm{\mathcal{L}}_{a+\sigma}+\norm{\varphi_{i+1}}_{a+\sigma})\qquad \forall a\geq 0.
\end{align*}
This and \eqref{for:71} imply that
\begin{align}\label{for:113}
\norm{\varphi_{i}}_{a}\leq C_a\norm{\mathcal{L}}_{a+(\mathfrak{n}-i+1)\sigma},\qquad \forall a\geq 0,
\end{align}
for any $0\leq i\leq \mathfrak{n}$. Hence we proved \eqref{for:72}.

\smallskip
\noindent{\textbf{Step $3$}: \emph{Basic properties of $R_{d_i},\,0\leq i\leq\mathfrak{n}$ }}. If $v\in E_{\mathcal{A}}$, then in \eqref{for:48} substituting $u$ by $v$ and using Lemma \ref{cor:3}, \eqref{for:56} and \eqref{for:65} we have
\begin{align}\label{for:76}
 (\widehat{R_{d_i}})_v&=(\widehat{R_{g_1}})_{\mathcal{A}^{-1}\overline{d_{i}}v}-\overline{d_i}\mathcal{A}(\widehat{R_{d_{i+1}}})_{(\mathcal{A}\overline{d_{i+1}})^{-1}v}\notag\\
 &-\sum_{j\leq-1}\mathcal{A}^{-(j+1)}(\widehat{\mathcal{L}(g_1,d_i)})_{(\mathcal{A}\overline{d_{i+1}})^{j}v}\notag\\
 &-\sum_{j\leq-2}\mathcal{A}^{-(j+1)}\overline{d_i}\mathcal{A}(\widehat{\varphi_{i+1}})_{(\mathcal{A}\overline{d_{i+1}})^{j}v}.
\end{align}
Here we used the relation $\overline{d_{i}}(\mathcal{A}\overline{d_{i+1}})^{-1}=\mathcal{A}^{-1}\overline{d_{i}}$ and \eqref{for:47}.

In \eqref{for:48} substituting $u$ by $(\mathcal{A}\overline{d_{m}})^{-1}v$ and using Lemma \ref{cor:3}, \eqref{for:56} and \eqref{for:65} we have
\begin{align*}
 (\widehat{R_{d_{i}}})_{(\mathcal{A}\overline{d_{m}})^{-1}v}&=-\sum_{j\leq-1}A^{-(j+1)}\overline{d_{i}}\mathcal{A}
 (\widehat{\varphi_{i+1}})_{(\mathcal{A}\overline{d_{i+1}})^{j}(\mathcal{A}\overline{d_{m}})^{-1}v}\\
 &-\sum_{j\leq-1}\mathcal{A}^{-(j+1)}
 (\widehat{\mathcal{L}(g_1,d_{i})})_{(\mathcal{A}\overline{d_{i+1}})^{j}(\mathcal{A}\overline{d_{m}})^{-1}v}.
\end{align*}
Lemma \ref{cor:3} and \eqref{for:45} show that we can use Corollary \ref{cor:2} to estimate the sums:
\begin{align}\label{for:75}
 \big\|(\widehat{R_{d_{i}}})_{(\mathcal{A}\overline{d_{m}})^{-1}v}\big\|&\leq C_a(\norm{\varphi_{i+1}}_a+\norm{\mathcal{L}}_a)\norm{\mathcal{A}\overline{d_{m}})^{-1}v}^{-a+\sigma}\notag\\
 &\leq C_a\norm{\mathcal{L}}_{a+(\mathfrak{n}+1)\sigma}\norm{v}^{-a+\sigma}
\end{align}
for any $v\in E_\mathcal{A}$ and $0\leq m,\,i\leq\mathfrak{n}$. Here we used estimate \eqref{for:113} for $\varphi_{i+1}$.

Applying \eqref{for:75} and Corollary \ref{cor:2} to \eqref{for:76}, we get
\begin{align}\label{for:80}
  &\big\|(\widehat{R_{d_i}})_v-(\widehat{R_{g_1}})_{\mathcal{A}^{-1}\overline{d_{i}}v}\big\|\notag\\
  &\leq \|\overline{d_i}\mathcal{A}(\widehat{R_{d_{i+1}}})_{(\mathcal{A}\overline{d_{i+1}})^{-1}v}\|+C_a(\norm{\varphi_{i+1}}_a
  +\norm{\mathcal{L}_a})\norm{v}^{-a+\sigma}\notag\\
  &\leq C_a\norm{\mathcal{L}}_{a+(\mathfrak{n}+1)\sigma}\norm{v}^{-a+\sigma}.
\end{align}
In \eqref{for:52} substituting $u$ by substitute $u$ by $\mathcal{A}\overline{d_{m}}\mathcal{A}\overline{d_{n}}v$ and using Lemma \ref{cor:3}, \eqref{for:56} and \eqref{for:65} we have
\begin{align*}
 (\widehat{R_{d_i}})_{\mathcal{A}\overline{d_{m}}\mathcal{A}\overline{d_{n}}v}&=\mathcal{A}^{-1}\overline{d_i}\mathcal{A}
 (\widehat{R_{d_{i+1}}})_{\mathcal{A}\overline{d_{m}}\mathcal{A}\overline{d_{n}}v}\\
 &+\sum_{j\geq0}\mathcal{A}^{-(j+1)}(\widehat{\mathcal{L}(g_1,d_i)})_{(\mathcal{A}\overline{d_{i+1}})^{j}\mathcal{A}\overline{d_{m}}\mathcal{A}\overline{d_{n}}v}.
\end{align*}
\eqref{for:45} shows that we can use Corollary \ref{cor:2} to estimate difference:
\begin{align*}
 &\big|(\widehat{R_{d_i}})_{\mathcal{A}\overline{d_{m}}\mathcal{A}\overline{d_{n}}v}-\mathcal{A}^{-1}
 \overline{d_i}\mathcal{A}(\widehat{R_{d_{i+1}}})_{\mathcal{A}\overline{d_{m}}\mathcal{A}\overline{d_{n}}v}\big|\\
 &\leq C_a\norm{\mathcal{L}}_a\norm{\mathcal{A}\overline{d_{m}}\mathcal{A}\overline{d_{n}}v}^{-a+\sigma}\\
 &\leq C_a\norm{\mathcal{L}}_a\norm{v}^{-a+\sigma}
\end{align*}
for any $a>\sigma$.

Note that $R_{d_{\mathfrak{n}+1}}=0$, then above inequality implies
\begin{align}\label{for:77}
 &\big|(\widehat{R_{d_i}})_{\mathcal{A}\overline{d_{m}}\mathcal{A}\overline{d_{n}}v}\big|\leq C_a\norm{\mathcal{L}}_a\norm{v}^{-a+\sigma},
\end{align}
for any $a>\sigma$, if $v\in E_\mathcal{A}$ and $0\leq i,m,n\leq\mathfrak{n}$.

\smallskip
\noindent{\textbf{Step $4$}: \emph{Proof of the result }}

In \eqref{for:60}, for any $0\leq i\leq \mathfrak{n}$ Lemma \ref{le:8} shows that  the obstructions for
\begin{align*}
 \overline{d_i}R_{g_1}\circ \overline{d_{i+1}}-R_{g_1}\circ \overline{d_i}+\overline{d_i}\mathcal{A}R_{d_{i+1}}+\mathcal{L}(g_1,d_i)
\end{align*}
with respect to $\mathcal{A}\overline{d_{i+1}}$ vanish; therefore we get
\begin{align}\label{for:84}
&\sum_j\mathcal{A}^{-(j+1)}(\widehat{R_{g_1}})_{\mathcal{A}^j\overline{d_{i}}v}\stackrel{(1)}{=}\sum_j\mathcal{A}^{-(j+1)}
(\widehat{R_{g_1}})_{\overline{d_{i}}(\mathcal{A}\overline{d_{i+1}})^jv}\notag\\
 &=\sum_j\mathcal{A}^{-(j+1)}\overline{d_{i}}(\widehat{R_{g_1}})_{\overline{d_{i+1}}(\mathcal{A}\overline{d_{i+1}})^jv}\notag\\
 &+\sum_j\mathcal{A}^{-(j+1)}\overline{d_i}\mathcal{A}(\widehat{R_{d_{i+1}}})_{(\mathcal{A}\overline{d_{i+1}})^jv}\notag\\
 &+\sum_j\mathcal{A}^{-(j+1)}(\widehat{\mathcal{L}(g_1,d_{i})})_{(\mathcal{A}\overline{d_{i+1}})^jv}\notag\\
 &\stackrel{(2)}{=}\overline{d_{i}}\sum_j(\mathcal{A}\overline{d_{i+1}})^{-(j+1)}(\widehat{R_{g_1}})_{\overline{d_{i+1}}(\mathcal{A}\overline{d_{i+1}})^jv}\notag\\
 &+\overline{d_i}\sum_j(\mathcal{A}\overline{d_{i+1}})^{-(j+1)}\mathcal{A}(\widehat{R_{d_{i+1}}})_{(\mathcal{A}\overline{d_{i+1}})^jv}\notag\\
 &+\sum_j\mathcal{A}^{-(j+1)}(\widehat{\mathcal{L}(g_1,d_{i})})_{(\mathcal{A}\overline{d_{i+1}})^jv}.
\end{align}
Here in $(1)$ and $(2)$ we use the relation $\mathcal{A}^j\overline{d_{i}}=\overline{d_{i}}(\mathcal{A}\overline{d_{i+1}})^j$ for any $j\in\ZZ$.

Especially, for $i=\mathfrak{n}$ we have
\begin{align*}
 &\sum_j\overline{d_{\mathfrak{n}}^{-1}}\mathcal{A}^{-(j+1)}(\widehat{R_{g_1}})_{\mathcal{A}^j\overline{d_{\mathfrak{n}}}v}-\sum_j\mathcal{A}^{-(j+1)}
 (\widehat{R_{g_1}})_{\mathcal{A}^jv}\\
 &=\sum_j\overline{d_{\mathfrak{n}}^{-1}}\mathcal{A}^{-(j+1)}(\widehat{\mathcal{L}(g_1,d_{\mathfrak{n}})})_{\mathcal{A}^jv}.
\end{align*}
Here we used $d_{\mathfrak{n}+1}=e$ and $R_{\mathfrak{n}+1}=0$.

Then by Corollary \ref{cor:2} we obtain
\begin{align}\label{for:82}
 &\Big|\overline{d_{\mathfrak{n}}^{-1}}\sum_j\mathcal{A}^{-(j+1)}(\widehat{R_{g_1}})_{A^j\overline{d_{\mathfrak{n}}}v}
 -\sum_j\mathcal{A}^{-(j+1)}(\widehat{R_{g_1}})_{\mathcal{A}^jv}\Big|\notag\\
 &\leq C_a\norm{\mathcal{L}}_a\norm{v}^{-a+\sigma}
\end{align}
for any $a>\sigma$.

Hence if we can prove:
\begin{align}\label{for:83}
 &\Big|\overline{d_{i}^{-1}}\sum_j\mathcal{A}^{-(j+1)}(\widehat{R_{g_1}})_{\mathcal{A}^j\overline{d_{i}}v}-
 \overline{d_{i+1}^{-1}}\sum_j\mathcal{A}^{-(j+1)}(\widehat{R_{g_1}})_{\mathcal{A}^j\overline{d_{i+1}}v}\Big|\notag\\
 &\leq C_a\norm{\mathcal{L}}_a\norm{v}^{-a+(\mathfrak{n}+1)\sigma},
\end{align}
for each  $0\leq i\leq\mathfrak{n}-1$, then \eqref{for:66} follows from \eqref{for:82} and \eqref{for:83} immediately.

In \eqref{for:84} by using Lemma \ref{cor:3}, \eqref{for:56} and \eqref{for:65} we have
\begin{align*}
  &\sum_j\overline{d_{i}}^{-1}\mathcal{A}^{-(j+1)}(\widehat{R_{g_1}})_{\mathcal{A}^j\overline{d_{i}}v}\\
  &=\sum_{j=0,1}(\mathcal{A}\overline{d_{i+1}})^{-(j+1)}(\widehat{R_{g_1}})_{\overline{d_{i+1}}(\mathcal{A}\overline{d_{i+1}})^jv}\\
  &+\sum_{-1\leq j\leq1}(\mathcal{A}\overline{d_{i+1}})^{-(j+1)}\mathcal{A}(\widehat{R_{d_{i+1}}})_{(\mathcal{A}\overline{d_{i+1}})^jv}\\
  &+\sum_{j\leq-2,\,j\geq3}(\mathcal{A}\overline{d_{i+1}})^{-(j+1)}\mathcal{A}(\widehat{\varphi_{i+1}})_{(\mathcal{A}\overline{d_{i+1}})^jv}\\
  &+\sum_j\overline{d_{i}}^{-1}\mathcal{A}^{-(j+1)}(\widehat{\mathcal{L}(g_1,d_{i})})_{(\mathcal{A}\overline{d_{i+1}})^jv}.
\end{align*}
By using Lemma \ref{cor:3} and \eqref{for:56} we also have
\begin{align*}
  \overline{d_{i+1}^{-1}}\sum_j\mathcal{A}^{-(j+1)}(\widehat{R_{g_1}})_{\mathcal{A}^j\overline{d_{i+1}}v}
  =\overline{d_{i+1}^{-1}}\sum_{-1\leq j\leq1}\mathcal{A}^{-(j+1)}(\widehat{R_{g_1}})_{\mathcal{A}^j\overline{d_{i+1}}v}
\end{align*}
Set
\begin{align}\label{for:88}
  J_i(v)&=\sum_{j=0,1}(\mathcal{A}\overline{d_{i+1}})^{-(j+1)}(\widehat{R_{g_1}})_{\overline{d_{i+1}}(\mathcal{A}\overline{d_{i+1}})^jv}\notag\\
  &+\sum_{j=0,1}(\mathcal{A}\overline{d_{i+1}})^{-(j+1)}\mathcal{A}(\widehat{R_{d_{i+1}}})_{(\mathcal{A}\overline{d_{i+1}})^jv}\notag\\
&-\overline{d_{i+1}^{-1}}\sum_{-1\leq j\leq1}\mathcal{A}^{-(j+1)}(\widehat{R_{g_1}})_{\mathcal{A}^j\overline{d_{i+1}}v}\notag\\
&=\overline{d_{i+1}^{-1}}(\widehat{R_{d_{i+1}}})_{v}+(\mathcal{A}\overline{d_{i+1}})^{-2}\mathcal{A}(\widehat{R_{d_{i+1}}})_{\mathcal{A}\overline{d_{i+1}}v}\notag\\
&+(\mathcal{A}\overline{d_{i+1}})^{-2}(\widehat{R_{g_1}})_{\overline{d_{i+1}}\mathcal{A}\overline{d_{i+1}}v}
-\overline{d_{i+1}^{-1}}(\widehat{R_{g_1}})_{\mathcal{A}^{-1}\overline{d_{i+1}}v}\notag\\
&-\overline{d_{i+1}^{-1}}\mathcal{A}^{-2}(\widehat{R_{g_1}})_{\mathcal{A}\overline{d_{i+1}}v}.
\end{align}
Then it follows from \eqref{for:113}, \eqref{for:75} and Corollary \ref{cor:2} that
\begin{align}\label{for:78}
 &\Big|\overline{d_{i}^{-1}}\sum_j\mathcal{A}^{-(j+1)}(\widehat{R_{g_1}})_{\mathcal{A}^j\overline{d_{i}}v}-
 \overline{d_{i+1}^{-1}}\sum_j\mathcal{A}^{-(j+1)}(\widehat{R_{g_1}})_{\mathcal{A}^j\overline{d_{i+1}}v}\Big|\notag\\
 &\leq |J_i(v)|+\sum_{j\leq-2,\,j\geq3}\norm{(\mathcal{A}\overline{d_{i+1}})^{-(j+1)}\mathcal{A}(\widehat{\varphi_{i+1}})_{(\mathcal{A}\overline{d_{i+1}})^jv}}\notag\\
 &+\sum_j\norm{\overline{d_{i}}^{-1}\mathcal{A}^{-(j+1)}(\widehat{\mathcal{L}(g_1,d_{i})})_{(\mathcal{A}\overline{d_{i+1}})^{-1}v}}
 +\norm{\mathcal{A}(\widehat{R_{d_{i+1}}})_{(\mathcal{A}\overline{d_{i+1}})^jv}}\notag\\
 &\leq |J_i(v)|+C_a\norm{\mathcal{L}}_a\norm{v}^{-a+(\mathfrak{n}+2)\sigma}.
\end{align}
Then to prove \eqref{for:83} we need to estimate $|J_i(v)|$.

\eqref{for:80} has provided enough information for $(\widehat{R_{d_{i+1}}})_{v}$. Next, we will
center on the computation of $(\widehat{R_{d_{i+1}}})_{\mathcal{A}\overline{d_{i+1}}v}$.

For any $0\leq m\leq\mathfrak{n}$, in \eqref{for:52} substituting $i$ by $m$ and $u$ by $\mathcal{A}\overline{d_{i+1}}v$ and using Lemma \ref{cor:3}, \eqref{for:56} and \eqref{for:65} we get
\begin{align}\label{for:6}
 (\widehat{R_{d_{m}}})_{\mathcal{A}\overline{d_{i+1}}v}&=\mathcal{A}^{-1}\overline{d_{m}}(\widehat{R_{g_1}})_{\overline{d_{m+1}}\mathcal{A}\overline{d_{i+1}}v}
 -\mathcal{A}^{-1}(\widehat{R_{g_1}})_{\overline{d_{m}}\mathcal{A}\overline{d_{i+1}}v}\notag\\
 &+\mathcal{A}^{-1}\overline{d_m}\mathcal{A}(\widehat{R_{d_{m+1}}})_{\mathcal{A}\overline{d_{i+1}}v}
 +\Theta(m,i,v),
\end{align}
where
\begin{align}
 \Theta(m,i,v)&=\mathcal{A}^{-2}\overline{d_m}\mathcal{A}(\widehat{R_{d_{m+1}}})_{\mathcal{A}\overline{d_{m+1}}\mathcal{A}\overline{d_{i+1}}v}\notag\\
 &+\sum_{j\geq2}\mathcal{A}^{-(j+1)}\overline{d_m}A(\widehat{\varphi_{i+1}})_{(\mathcal{A}\overline{d_{m+1}})^{j}\mathcal{A}\overline{d_{i+1}}v}\notag\\
 &+\sum_{j\geq0}\mathcal{A}^{-(j+1)}(\widehat{\mathcal{L}(g_1,d_{m})})_{(\mathcal{A}\overline{d_{m+1}})^{j}\mathcal{A}\overline{d_{i+1}}v},
\end{align}
and
\begin{align}\label{for:79}
 \abs{\Theta(m,i,v)}\leq C_a\norm{\mathcal{L}}_a\norm{v}^{-a+(\mathfrak{n}+1)\sigma},\quad \forall a>(\mathfrak{n}+1)\sigma
\end{align}
follows from \eqref{for:113}, \eqref{for:77} and Corollary \ref{cor:3}.

Set $\Lambda(m,i,v)=(\widehat{R_{d_{m}}})_{\mathcal{A}\overline{d_{i+1}}v}-\mathcal{A}^{-1}\overline{d_m}\mathcal{A}(\widehat{R_{d_{m+1}}})_{\mathcal{A}\overline{d_{i+1}}v}.$
Then it follows that
\begin{align}\label{for:74}
 (\widehat{R_{d_{i+1}}})_{\mathcal{A}\overline{d_{i+1}}v}&=\Lambda(i+1,i,v)
 +\sum_{m=i+2}^{\mathfrak{n}}\mathcal{A}^{-1}(\prod_{j=i+2}^m\overline{d_{j-1}})\mathcal{A}\Lambda(m,i,v)
 \end{align}
by noting used $R_{d_{\mathfrak{n}+1}}=0$, where $\prod_{j=i+2}^m\overline{d_{j-1}}$ is defined in \eqref{sec:1} of Section \ref{sec:2}; and we also get
\begin{align}\label{for:7}
  \Lambda(m,i,v)&=\mathcal{A}^{-1}\overline{d_{m}}(\widehat{R_{g_1}})_{\overline{d_{m+1}}\mathcal{A}\overline{d_{i+1}}v}
 -\mathcal{A}^{-1}(\widehat{R_{g_1}})_{\overline{d_{m}}\mathcal{A}\overline{d_{i+1}}v}\notag\\
 &+\Theta(m,i,v).
\end{align}
This shows that we can express $\sum_{m=i+2}^{\mathfrak{n}}\mathcal{A}^{-1}(\prod_{j=i+2}^m\overline{d_{j-1}})\mathcal{A}\Lambda(m,i,v)$ as:
\begin{align*}
  &\sum_{m=i+2}^{\mathfrak{n}}\mathcal{A}^{-1}(\prod_{j=i+2}^m\overline{d_{j-1}})\mathcal{A}\Lambda(m,i,v)\notag\\
 &=\sum_{m=i+2}^{\mathfrak{n}}\mathcal{A}^{-1}(\prod_{j=i+2}^m\overline{d_{j-1}})\mathcal{A}\big(\mathcal{A}^{-1}
 \overline{d_{m}}(\widehat{R_{g_1}})_{\overline{d_{m+1}}\mathcal{A}\overline{d_{i+1}}v}\notag\\
 &-\mathcal{A}^{-1}(\widehat{R_{g_1}})_{\overline{d_{m}}\mathcal{A}\overline{d_{i+1}}v}\big)
+\sum_{m=i+2}^{\mathfrak{n}}\mathcal{A}^{-1}(\prod_{j=i+2}^m\overline{d_{j-1}})\mathcal{A}\Theta(m,i,v).
\end{align*}
The first sum can be simplified as:
\begin{align*}
 d_{i+1}\mathcal{A}^{-1}(\widehat{R_{g_1}})_{\mathcal{A}\overline{d_{i+1}}v}
 -\mathcal{A}^{-1}\overline{d_{i+1}}(\widehat{R_{g_1}})_{\overline{d_{i+2}}\mathcal{A}\overline{d_{i+1}}v}.
\end{align*}
Here we used $\mathcal{A}^{-1}(\prod_{j=i+2}^{\mathfrak{n}+1}\overline{d_{j-1}})=d_{i+1}\mathcal{A}^{-1}$. Then it follows from \eqref{for:74} and above analysis that
\begin{align*}
 (\widehat{R_{d_{i+1}}})_{\mathcal{A}\overline{d_{i+1}}v}&=\mathcal{A}^{-1}\overline{d_{i+1}}(\widehat{R_{g_1}})_{\overline{d_{i+2}}\mathcal{A}\overline{d_{i+1}}v}
 -\mathcal{A}^{-1}(\widehat{R_{g_1}})_{\overline{d_{i+1}}\mathcal{A}\overline{d_{i+1}}v}\\
 &+d_{i+1}\mathcal{A}^{-1}(\widehat{R_{g_1}})_{\mathcal{A}\overline{d_{i+1}}v}
 -\mathcal{A}^{-1}\overline{d_{i+1}}(\widehat{R_{g_1}})_{\overline{d_{i+2}}\mathcal{A}\overline{d_{i+1}}v}\\
&+\sum_{m=i+2}^{\mathfrak{n}}\mathcal{A}^{-1}(\prod_{j=i+2}^m\overline{d_{j-1}})\mathcal{A}\Theta(m,i,v)+\Theta(i+1,i,v)\\
&=d_{i+1}\mathcal{A}^{-1}(\widehat{R_{g_1}})_{\mathcal{A}\overline{d_{i+1}}v}
 -\mathcal{A}^{-1}(\widehat{R_{g_1}})_{\overline{d_{i+1}}\mathcal{A}\overline{d_{i+1}}v}\\
 &+\sum_{m=i+2}^{\mathfrak{n}}\mathcal{A}^{-1}(\prod_{j=i+2}^m\overline{d_{j-1}})\mathcal{A}\Theta(m,i,v)+\Theta(i+1,i,v).
\end{align*}
Hence it follows from \eqref{for:79} that
\begin{align}\label{for:81}
  &\big|(\widehat{R_{d_{i+1}}})_{\mathcal{A}\overline{d_{i+1}}v}-\overline{d_{i+1}}\mathcal{A}^{-1}(\widehat{R_{g_1}})_{\mathcal{A}\overline{d_{i+1}}v}
  +\mathcal{A}^{-1}(\widehat{R_{g_1}})_{\overline{d_{i+1}}\mathcal{A}\overline{d_{i+1}}v}\big|\notag\\
  &\leq C_a\norm{\mathcal{L}}_a\norm{v}^{-a+(\mathfrak{n}+1)\sigma}.
\end{align}
Then by using \eqref{for:88} we can estimate $J_i(v)$ by rewriting it as follows:
\begin{align*}
  J_i(v)&\stackrel{(1)}{=}\overline{d_{i+1}^{-1}}\big((\widehat{R_{d_{i+1}}})_{v}-(\widehat{R_{g_1}})_{\mathcal{A}^{-1}\overline{d_{i+1}}v}\big)\\
  &+(\mathcal{A}\overline{d_{i+1}})^{-2}\mathcal{A}\big((\widehat{R_{d_{i+1}}})_{\mathcal{A}\overline{d_{i+1}}v}-\overline{d_{i+1}}\mathcal{A}^{-1}
  (\widehat{R_{g_1}})_{\mathcal{A}\overline{d_{i+1}}v}\\
  &+\mathcal{A}^{-1}(\widehat{R_{g_1}})_{\overline{d_{i+1}}\mathcal{A}\overline{d_{i+1}}v}\big)\\
  &+\overline{d_{i+1}^{-1}}(\widehat{R_{g_1}})_{\mathcal{A}^{-1}\overline{d_{i+1}}v}+(\mathcal{A}\overline{d_{i+1}})^{-2}\mathcal{A}\overline{d_{i+1}}\mathcal{A}^{-1}
  (\widehat{R_{g_1}})_{\mathcal{A}\overline{d_{i+1}}v}\\
  &-(\mathcal{A}\overline{d_{i+1}})^{-2}\mathcal{A}\mathcal{A}^{-1}(\widehat{R_{g_1}})_{\overline{d_{i+1}}\mathcal{A}\overline{d_{i+1}}v}\\
  &+(\mathcal{A}\overline{d_{i+1}})^{-2}(\widehat{R_{g_1}})_{\overline{d_{i+1}}\mathcal{A}\overline{d_{i+1}}v}
-\overline{d_{i+1}^{-1}}(\widehat{R_{g_1}})_{\mathcal{A}^{-1}\overline{d_{i+1}}v}\notag\\
&-\overline{d_{i+1}^{-1}}\mathcal{A}^{-2}(\widehat{R_{g_1}})_{\mathcal{A}\overline{d_{i+1}}v}\\
&=\overline{d_{i+1}^{-1}}\big((\widehat{R_{d_{i+1}}})_{v}-(\widehat{R_{g_1}})_{\mathcal{A}^{-1}\overline{d_{i+1}}v}\big)\\
  &+(\mathcal{A}\overline{d_{i+1}})^{-2}\mathcal{A}\big((\widehat{R_{d_{i+1}}})_{\mathcal{A}\overline{d_{i+1}}v}-\overline{d_{i+1}}\mathcal{A}^{-1}
  (\widehat{R_{g_1}})_{\mathcal{A}\overline{d_{i+1}}v}\\
  &+\mathcal{A}^{-1}(\widehat{R_{g_1}})_{\overline{d_{i+1}}\mathcal{A}\overline{d_{i+1}}v}\big).
  \end{align*}
In $(1)$ we used \eqref{for:80}, \eqref{for:81} to substitute $(\widehat{R_{d_{i+1}}})_{v}$ and $(\widehat{R_{d_{i+1}}})_{\mathcal{A}\overline{d_{i+1}}v}$ respectively, which also implies:
 \begin{align*}
  \abs{J_i(v)}\leq C_a\norm{\mathcal{L}}_a\norm{v}^{-a+(\mathfrak{n}+2)\sigma}.
\end{align*}
Then \eqref{for:83} follows from \eqref{for:78} and above computation immediately.
\end{proof}

\begin{lemma}\label{le:4}
For any $v\in E_\mathcal{A}$, we have
\begin{align*}
 &\Big|\sum_j \mathcal{A}^{-(j+1)}(\widehat{R_{g_1}})_{\mathcal{A}^jv}\Big|\leq C_a\norm{\mathcal{L}}_a\norm{v}^{-a+\kappa},
\end{align*}
where $a>\kappa=(\mathfrak{n}+2)\sigma+(N+1)\tau_1^{-1}\log q$, $q=\max_{\delta_1,\delta_2=-1,0,1}{\norm{\mathcal{B}^{\delta_1}\mathcal{A}^{\delta_2}}}$.
\end{lemma}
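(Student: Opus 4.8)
The quantity $\mathfrak{o}(v):=\sum_j\mathcal{A}^{-(j+1)}(\widehat{R_{g_1}})_{\mathcal{A}^jv}$ is exactly the obstruction whose vanishing Lemma \ref{le:8} shows to be necessary for $\Delta_{\mathcal{A}}\Omega=R_{g_1}$ to admit a $C^\infty$ solution, and the lemma asserts that, thanks to Proposition \ref{po:2}, it is quadratically small, i.e.\ dominated by $\norm{\mathcal{L}}_a$ with a fixed loss of derivatives. The plan is to turn the near-$\mathcal{B}$-invariance of $\mathfrak{o}$ coming from Proposition \ref{po:2} into a telescoping identity over the $\mathcal{B}$-orbit of $v$, whose leading term vanishes because $R_{g_1}$ is smooth while the orbit escapes to infinity, and whose tail is summed by the mechanism of Lemma \ref{le:7}(c) and Corollary \ref{cor:2}. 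This is the analogue of the corresponding step in the abelian case of \cite{Damjanovic4}; the non-commutativity of $\mathcal{A}$ and $\mathcal{B}$ has already been absorbed into $\mathcal{L}$ by Proposition \ref{po:2}.

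First I would carry out the reduction of Step $1$ in the proof of Proposition \ref{po:2}: subtracting the coboundary $\Delta_{\mathcal{A}}\Omega$ furnished by Lemma \ref{le:5}, with $\norm{\Omega}_a\le C_a\norm{R_{g_1}}_{a+2\sigma_1}$, alters neither $\mathcal{L}$ nor $\mathfrak{o}$, so one may assume $(\widehat{R_{g_1}})_u=0$ for $u\notin E_{\mathcal{A}}$; since $E_{\mathcal{A}}$ meets each nonzero dual $\mathcal{A}$-orbit once, this collapses the obstruction to $\mathfrak{o}(v)=\mathcal{A}^{-1}(\widehat{R_{g_1}})_v$ for $v\in E_{\mathcal{A}}$. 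Here the choice $\mathcal{A}=A^{l}$ with $l>[\max\{N_1(c),N_2(c)\}]+1$ enters decisively: since $\mathcal{B}$ preserves the Lyapunov splitting of $\mathcal{A}$ (item \eqref{for:49} of Proposition \ref{po:1}) and acts on it boundedly, a single $\mathcal{A}$-step dominates the distortion it introduces, so $\mathcal{M}_{\mathcal{A}}(\mathcal{B}v)=\mathcal{A}^{-z(v)}\mathcal{B}v$ with $z(v)\in\{-1,0,1\}$, the map $T\colon v\mapsto\mathcal{M}_{\mathcal{A}}(\mathcal{B}v)$ is a bijection of $E_{\mathcal{A}}$, and writing $g(v)=\mathcal{A}^{-z(v)}\mathcal{B}$ one has $\norm{g(v)^{-1}}\le q$ with $q=\max_{\delta_1,\delta_2=-1,0,1}\norm{\mathcal{B}^{\delta_1}\mathcal{A}^{\delta_2}}$ (Lemmas \ref{le:9} and \ref{cor:3} are what keep the $\mathcal{A}$-shifts controlled). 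Proposition \ref{po:2} then reads
\begin{align*}
\mathfrak{o}(Tv)=g(v)\,\mathfrak{o}(v)-g(v)E(v),\qquad\norm{E(v)}\le C_a\norm{\mathcal{L}}_a\,\abs{v}^{-a+(\mathfrak{n}+1)\sigma},\quad v\in E_{\mathcal{A}}.
\end{align*}

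Iterating, $\mathfrak{o}(v)=h_n\,\mathfrak{o}(T^nv)+\sum_{i=0}^{n-1}h_i\,E(T^iv)$ with $h_i=g(v)^{-1}g(Tv)^{-1}\cdots g(T^{i-1}v)^{-1}$ (so $\norm{h_i}\le q^i$), the tail being independent of $n$. For fixed $v$ the leading term $h_n\mathfrak{o}(T^nv)=h_n\mathcal{A}^{-1}(\widehat{R_{g_1}})_{T^nv}$ tends to $0$: the $T$-orbit of $v$ is infinite by ergodicity (Corollary \ref{cor:5}), hence $\abs{T^nv}\to\infty$, and since after reordering (Lemmas \ref{le:6} and \ref{le:11}) $T^nv$ lies on a dual orbit for which Lemma \ref{le:1} gives $\abs{T^nv}\ge C\exp(\tau_1 n)p(n)^{-N}\abs{v}^{-N}$, the super-polynomial decay of the Fourier coefficients of the smooth map $R_{g_1}$ outpaces the geometric growth $\norm{h_n}\le q^n$ once $a$ is large. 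Hence $\mathfrak{o}(v)=\sum_{i\ge0}h_i\,E(T^iv)$.

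It remains to estimate this series, which is of the type appearing in Lemma \ref{le:7}(c): the weights satisfy $\norm{h_i}\le q^i$, and $\norm{E(T^iv)}\le C_a\norm{\mathcal{L}}_a\abs{T^iv}^{-a+(\mathfrak{n}+1)\sigma}$ with $\abs{T^iv}\ge C\exp(\tau_1 i)p(i)^{-N}\abs{v}^{-N}$ by Lemma \ref{le:1}. Splitting the index range at $i_\ast\asymp(N+1)\tau_1^{-1}\log\abs{v}$ — using for $i\ge i_\ast$ the inequality $\abs{T^iv}\ge\abs{v}$ which then holds, and for $i<i_\ast$ the elementary lower bound that follows from $\mathcal{B}$ preserving the $\mathcal{A}$-splitting and from the substantial size of $v$ in each block guaranteed by $v\in E_{\mathcal{A}}$ — and summing the two ranges against the geometric weights yields absolute convergence for $a>\kappa$ together with
\begin{align*}
\Big|\sum_j\mathcal{A}^{-(j+1)}(\widehat{R_{g_1}})_{\mathcal{A}^jv}\Big|\le C_a\norm{\mathcal{L}}_a\abs{v}^{-a+\kappa},\qquad\kappa=(\mathfrak{n}+2)\sigma+(N+1)\tau_1^{-1}\log q,
\end{align*}
where $(N+1)\tau_1^{-1}\log q$ is the analogue of $\kappa_y$ from Lemma \ref{le:7}(c) for the pair $(\mathcal{A},\mathcal{B})$ with per-step weight $q$ and growth rate $\tau_1$, and the extra $\sigma$ over the $(\mathfrak{n}+1)\sigma$ of Proposition \ref{po:2} is the customary loss in the tame estimate. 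The main difficulty I anticipate is precisely this final step: arranging the split at $i_\ast$ so that the decay in $\abs{v}$ is preserved, i.e.\ so that the Katznelson loss $\abs{v}^{-N}$ inherent in Lemma \ref{le:1} is absorbed by the $\mathcal{B}$-orbit weights rather than eroding the power of $\abs{v}$ — this is the real reason the large power $\mathcal{A}=A^l$ is used, and why the fine position of the $\mathcal{B}$-orbit relative to $E_{\mathcal{A}}$ (Lemmas \ref{le:9}, \ref{cor:3}, Proposition \ref{po:1}) must be controlled.
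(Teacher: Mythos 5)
Your overall strategy is essentially the same as the paper's: the obstruction $\mathfrak{o}(v)=\sum_j\mathcal{A}^{-(j+1)}(\widehat{R_{g_1}})_{\mathcal{A}^jv}$ satisfies a near-equivariance identity under the return map $T\colon v\mapsto\mathcal{M}_{\mathcal{A}}(\mathcal{B}v)$ by Proposition~\ref{po:2}, one telescopes along the orbit, kills the leading term via exponential orbit growth (Lemma~\ref{le:1} after reordering via Lemma~\ref{le:6}), and sums the tail by splitting at $n_0\asymp(N+1)\tau_1^{-1}\log\abs{v}$, using the exponential bound for the far range and a polynomial lower bound for the near range. The identification of $q=\max_{\delta_1,\delta_2}\norm{\mathcal{B}^{\delta_1}\mathcal{A}^{\delta_2}}$ as the per-step weight and $\kappa=(\mathfrak{n}+2)\sigma+(N+1)\tau_1^{-1}\log q$ is correct, and your "main difficulty" paragraph correctly pinpoints where the Katznelson loss $\abs{v}^{-N}$ has to be prevented from contaminating the final power of $\abs{v}$.

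There is, however, a genuine gap in the near-range estimate. You iterate with a single fixed map $T$ built from $\mathcal{B}$ and assert that for $i<i_\ast$ the polynomial lower bound $\abs{T^iv}\gtrsim p(i)^{-1}\abs{v}$ "follows from $\mathcal{B}$ preserving the $\mathcal{A}$-splitting and from the substantial size of $v$ in each block." This is not true for all $v\in E_{\mathcal{A}}$ in the forward $\mathcal{B}$-direction. Writing $T^iv=e_i\,\mathcal{A}^{l_i}\mathcal{B}^{i}v$ (as in \eqref{for:95}), the point $T^iv$ lies over a lattice point $(l_i,i)$ with $i\ge 0$, i.e.\ in the half-plane $K^{+}=\{k_2\ge 0\}$; whether $\norm{\mathcal{A}^{k_1}\mathcal{B}^{k_2}v}\ge C(\abs{k_1k_2}+1)^{-N}\norm{v}$ holds on $K^{+}$ or only on $K^{-}=\{k_2\le 0\}$ depends on the Lyapunov data of $v$ — concretely, in the paper's notation, on the sign of $\tfrac{\nu_1}{\lambda_1}-\tfrac{\nu_2}{\lambda_2}$ determined by the blocks where $v=\mathcal{M}(v)$ has its largest expanding/contracting projections. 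For roughly "half" of the minimal points the forward $\mathcal{B}$-orbit runs into a region where the polynomial lower bound fails, and the near-range sum then costs an extra power of $\abs{v}$ that ruins the estimate. The paper's resolution — which you need to incorporate — is to set up both telescoping expansions, with $T_{+}$ using $\mathcal{B}$ and $T_{-}$ using $\mathcal{B}^{-1}$, show that both give the same absolutely convergent series $\mathcal{D}_{v,+}=\mathcal{D}_{v,-}=\mathfrak{o}(v)$ (the exponential bound and the vanishing of the leading term hold in either direction), and then for each $v$ choose $\mathrm{sgn}(v)\in\{+,-\}$ so that the polynomial lower bound \eqref{for:29} holds along that orbit, estimating $\mathfrak{o}(v)$ via $\mathcal{D}_{v,\mathrm{sgn}(v)}$. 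Without this sign choice the argument, as written, does not close. (A minor point: the lower bound from Lemma~\ref{le:1} plus reordering is $\abs{T^nv}\ge Ce^{\tau n}p(n)^{-1}\abs{v}^{-N}$, not $p(n)^{-N}$, though this does not affect the structure.)
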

\begin{proof}
Let $v_{0,+}=v_{0,-}=v$ and let
\begin{align*}
 v_{n,+}=(\mathcal{M}_\mathcal{A}\mathcal{B})^nv\quad\text{ and }\quad v_{n,-}=(\mathcal{M}_\mathcal{A}\mathcal{B}^{-1})^nv, \quad n\geq1.
\end{align*}
(see \eqref{for:105} of Section \ref{sec:1}) By Lemma \ref{le:9} and the construction at  beginning of Section \ref{sec:2}, we get
 \begin{align*}
 v_{n,+}=\mathcal{A}^{\delta^+_n}\mathcal{B}v_{n-1,+}\quad\text{ and }\quad v_{n,-}=\mathcal{A}^{\delta^-_n}\mathcal{B}^{-1}v_{n-1,-}
 \end{align*}
 where $\delta^+_n,\delta^-_n\in \{-1,0,1\}$. This shows that
\begin{align*}
 v_{n,+}=\Pi_{j=n}^1 (\mathcal{A}^{\delta^+_j}\mathcal{B})v\quad\text{ and }\quad v_{n,-}=\Pi_{j=n}^1 (\mathcal{A}^{\delta^-_j}\mathcal{B}^{-1})v,\quad n\geq1.
\end{align*}
(see \eqref{for:12} of Section \ref{sec:1}).  We obtain:
\begin{align*}
 &\sum_j \mathcal{A}^{-(j+1)}(\widehat{R_{g_1}})_{\mathcal{A}^jv_{n,\pm}}=\sum_j \mathcal{A}^{-(j+1)}(\widehat{R_{g_1}})_{\mathcal{A}^j\mathcal{A}^{\delta^{\pm}_n}\mathcal{B}^{\pm 1}v_{n-1,\pm}}\\
 &=\mathcal{A}^{\delta^{\pm}_n}\sum_j \mathcal{A}^{-(j+1)}(\widehat{R_{g_1}})_{\mathcal{A}^j\mathcal{B}^{\pm 1}v_{n-1,\pm}}.
\end{align*}
since all the sums involved converge absolutely by \eqref{for:26} of Corollary \ref{cor:2}.

Let $\mathcal{E}^{\pm}_n=\prod_{i=1}^{n}(\mathcal{B}^{\mp 1}\mathcal{A}^{-\delta^{\pm }_i})$, then formally we obtain:
\begin{align*}
 &\sum_j \mathcal{A}^{-(j+1)}(\widehat{R_{g_1}})_{A^jv}- \lim_{n\rightarrow\infty}\mathcal{E}^{\pm}_n\mathcal{B}^{\mp 1}\sum_j \mathcal{A}^{-(j+1)}(\widehat{R_{g_1}})_{\mathcal{A}^j\mathcal{B}^{\pm 1}v_{n,\pm}}\\
 &=\mathcal{D}_{v,\pm}\stackrel{\text{def}}{=}\sum_j \mathcal{A}^{-(j+1)}(\widehat{R_{g_1}})_{A^jv}-\sum_j \mathcal{B}^{\mp 1}\mathcal{A}^{-(j+1)}(\widehat{R_{g_1}})_{\mathcal{A}^j\mathcal{B}^{\pm 1}v}\\
 &+\sum_{n=1}^\infty \mathcal{E}^{\pm}_n\big(\sum_j \mathcal{A}^{-(j+1)}(\widehat{R_{g_1}})_{A^jv_{n,\pm}}-\mathcal{B}^{\mp 1}\sum_j \mathcal{A}^{-(j+1)}(\widehat{R_{g_1}})_{\mathcal{A}^j\mathcal{B}^{\pm1}v_{n,\pm}}\big)
\end{align*}
Next, we will justify the convergence of $\mathcal{D}_{v,+}$ and $\mathcal{D}_{v,-}$ and compute the two limits. To do so, we need to estimate the growth rate of
$v_{n,\pm}$. Let $l^{\pm}_n=\sum_{i=1}^n\delta^{\pm}_i$. Similar to \eqref{for:51} we can write
\begin{align}\label{for:95}
 \Pi_{j=n}^1 (\mathcal{A}^{\delta^{\pm}_j}\mathcal{B}^{\pm1})=e_{n}^{\pm}\mathcal{A}^{l^{\pm}_{n}}\mathcal{B}^{\pm n},\qquad \forall n>0.
\end{align}
where $e_{n}^{\pm}\in \alpha([\mathcal{H},\mathcal{H}])$.

By \eqref{for:115} of Proposition \ref{po:1}
\begin{align}\label{for:116}
 \norm{e_{n}^{\pm}}\geq Cp(n)^{-1}
\end{align}
for a polynomial $p$.

Using Lemma \ref{le:1} and above inequality, for any $u\in\ZZ^N\backslash 0$ we have
\begin{align}\label{for:100}
 \norm{v_{n,\pm}}&=\big\|e_{n}^{\pm}(\mathcal{A}^{l^{\pm}_n}\mathcal{B}^{\pm n})v\big\|\geq Ce^{\tau n}p(n)^{-1}\norm{v}^{-N}\notag\\
 &\geq Ce^{\tau n/2}\norm{v}^{-N}.
 \end{align}
Then apply Proposition \ref{po:2} and \eqref{for:100}, we have
\begin{align}\label{for:98}
  \norm{\mathcal{D}_{v,\pm}}&\leq C_a\sum_{n=0}^{+\infty} q^n\norm{\mathcal{L}}_a\norm{v_{n,\pm}}^{-a+(\mathfrak{n}+1)\sigma}\\
  &\leq C_a\sum_{n=0}^{+\infty} q^n\norm{\mathcal{L}}_a(C\exp\{\tau n/2\}\norm{v}^{-N})^{-a+(\mathfrak{n}+1)\sigma}\notag\\
  &\leq C_a\sum_{n=0}^{+\infty} q^n\norm{\mathcal{L}}_a\exp\{-\tau n(a-(\mathfrak{n}+1)\sigma)/2\}\norm{v}^{N(a-(\mathfrak{n}+1)\sigma)}\notag\\
  &<\infty\notag
\end{align}
if $a>\frac{2\log q}{\tau }+(\mathfrak{n}+1)\sigma$, where $q=\max_{\delta_1,\delta_2=-1,0,1}{\norm{\mathcal{B}^{\delta_1}\mathcal{A}^{\delta_2}}}$.

To estimate the limit we use \eqref{for:34} of Corollary \ref{cor:2} and \eqref{for:100}. The two conditions are satisfied (see the beginning of the proof of Proposition \ref{po:2}). Then we have
\begin{align*}
 &\big\|\mathcal{E}^{\pm}_n\mathcal{B}^{\mp 1}\sum_j \mathcal{A}^{-(j+1)}(\widehat{R_{g_1}})_{\mathcal{A}^j\mathcal{B}^{\pm1}v_{n,\pm}}\big\|\\
 &\leq Cq^n \big\| \sum_j\mathcal{A}^{-(j+1)}(\widehat{R_{g_1}})_{\mathcal{A}^j\mathcal{B}^{\pm 1}v_{n,\pm}}\big\|\\
 &\leq C_aq^n\norm{\widehat{R_{g_1}}}_a \norm{\mathcal{B}v_{n,\pm}}^{-a+\kappa_{\mathcal{A},\mathcal{A}}+1}\\
 &\leq C_aq^n\norm{\widehat{R_{g_1}}}_a \exp\{-\tau\abs{n}(a-\kappa_{\mathcal{A},\mathcal{A}}-1)/2\}\norm{v}^{N(a-\kappa_{\mathcal{A},\mathcal{A}}-1)}
\end{align*}
for any $a>\frac{2\log q}{\tau }+\kappa_{\mathcal{A},\mathcal{A}}+1$.

Then it follows that the limit
\begin{align*}
 \lim_{n\rightarrow\infty}\mathcal{E}^{\pm}_n\mathcal{B}^{\mp 1}\sum_j \mathcal{A}^{-(j+1)}(\widehat{R_{g_1}})_{\mathcal{A}^j\mathcal{B}^{\pm1}v_{n,\pm}}=0.
\end{align*}
Hence the obstruction has two expressions:
\begin{align}\label{for:99}
 \sum_j \mathcal{A}^{-(j+1)}(\widehat{R_{g_1}})_{A^jv}=\mathcal{D}_{v,+}=\mathcal{D}_{v,-}.
\end{align}
We estimate the obstruction using both of its forms in order to obtain needed estimates.

 Next, we will estimate the lower bound of the increasing speed of $v_{n,\pm}$  in terms of $\norm{v}$ instead of $\norm{v}^{-N}$. \eqref{for:95} allows us to estimate growth rate of  $(\mathcal{A}^{l^{\pm}_n}\mathcal{B}^{\pm n})v$ instead.

For any $v=\mathcal{M}(v)$, in case $\mathcal{A}v\hookrightarrow 2(\mathcal{A})$, let $v_1$ be the projection of $v$ to the $0$-Lyapunov space $J'$ for $\mathcal{A}$. Then
\begin{align*}
  \norm{v_1}\geq C\norm{v}.
\end{align*}
Let the Lyapunov exponent of $\mathcal{B}$ on this Lyapunov space be $\nu$. For all $k=(k_1,k_2)\in\ZZ^2$ the Lyapunov exponent of $\mathcal{A}^{k_1}\mathcal{B}^{k_2}$ on $J'$ is $k_2\nu$ (see Proposition \ref{po:1}).
Then if $\nu\geq 0$, on the half-space $K_v^+=\{{k_1,k_2}\in\ZZ^2:k_2\geq 0\}$ we obtain
\begin{align*}
 \norm{\mathcal{A}^{k_1}\mathcal{B}^{k_2}v}\geq C(\abs{k_1}+1)^{-N}(\abs{k_2}+1)^{-N}\norm{v_1}\geq C(\abs{k_1k_2}+1)^{-N}\norm{v}
\end{align*}
and
\begin{align*}
 \norm{\mathcal{A}^{k_1}\mathcal{B}^{k_2}v}\geq C(\abs{k_1}+1)^{-N}e^{k_2\nu/2}\norm{v_1}\geq C(\abs{k_1k_2}+1)^{-N}\norm{v}
\end{align*}
on $K^-_v=\{{k_1,k_2}\in\ZZ^2:k_2\leq 0\}$ if $\nu< 0$.

In case $\mathcal{A}v\hookrightarrow 1(\mathcal{A})$, let $v_1$ and $v_2$ be the largest projections of $v$ to some Lyapunov space $J_1$ and $J_2$ with positive Lyapunov exponent $\lambda_1$
and negative Lyapunov exponent $\lambda_2$ respectively. Let $\nu_1$ and $\nu_2$ be corresponding Lyapunov exponents of $\mathcal{B}$ on the two Lyapunov spaces. Then
\begin{align*}
  \norm{v_1}\geq C\norm{v},\qquad \norm{v_2}\geq C\norm{v}.
\end{align*}
For all $k=(k_1,k_2)\in\ZZ^2$ the Lyapunov exponent of $\mathcal{A}^{k_1}\mathcal{B}^{k_2}$  on $J_1$ is $\chi(k_1,k_2)^+=k_1\lambda_1+k_2\nu_1$ and is $\chi(k_1,k_2)^-=k_2\lambda_2+k_2\nu_2$ on $J_2$. We want to obtain:
\begin{align}\label{for:28}
 \{(k_1,k_2):\chi(k_1,k_2)^+\geq0\}\bigcup \{(k_1,k_2):\chi(k_1,k_2)^-\geq0\}
\end{align}
covers either $K_v^+$ or $K_v^-$, which boils down to require $k_2(\frac{\nu_1}{\lambda_1}-\frac{\nu_2}{\lambda_2})\geq0$. Namely, for
any $(k_1,\,k_2)\in\ZZ^2$, $(k_1,\,k_2)$ belongs to the union in \eqref{for:28} if $k_2(\frac{\nu_1}{\lambda_1}-\frac{\nu_2}{\lambda_2})\geq0$
and this is true for $k_2\geq 0$ or for $k_2\leq 0$ depending on the sign of $\frac{\nu_1}{\lambda_1}-\frac{\nu_2}{\lambda_2}$. Therefore we obtain
\begin{align}\label{for:29}
 \norm{\mathcal{A}^{k_1}\mathcal{B}^{k_2}v}\geq C(\abs{k_1}+1)^{-N}(\abs{k_2}+1)^{-N}\norm{v}.
\end{align}
in $K_v^+$ or in $K_v^-$.

The above analysis shows that for any $v=\mathcal{M}(v)$, we can choose $\text{sgn}(v)\in\{-1,1\}$ such that \eqref{for:29} holds on $K_v^{\text{sgn}(v)}$. Using \eqref{for:95}, \eqref{for:116} and \eqref{for:29} we obtain:
\begin{align}\label{for:101}
  &\norm{v_{n,\text{sgn}(v)}}=\big\|e_{n}^{\text{sgn}(v)}\mathcal{A}^{l^{\text{sgn}(v)}_{n}}\mathcal{B}^{\text{sgn}(v) n}v\big\|\notag\\
 &\geq C(n^2+1)^{-N}p(n)^{-1}\norm{v}
 \end{align}
on $K_v^{\text{sgn}(v)}$. From \eqref{for:98} we get
\begin{align}\label{for:102}
  \norm{\mathcal{D}_{v,\text{sgn}(v)}}&\leq C_a\sum_{n=0}^{+\infty} q^n\norm{\mathcal{L}}_a\norm{v_{n,\text{sgn}(v)}}^{-a+(\mathfrak{n}+1)\sigma}
 \end{align}
if $a>(\mathfrak{n}+1)\sigma$.

 From \eqref{for:99}, \eqref{for:100}, \eqref{for:101} and \eqref{for:102} we see that the conclusion follows immediately if we can prove the following claim:

 \noindent Suppose $u_n$, $n\geq 0$ is a sequence of vectors in $\ZZ^N\backslash 0$ such that
 \begin{align*}
  (*)\norm{u_{n}}\geq Ce^{n\tau/2}\norm{u_0}^{-N}\quad\text{ and }\quad(**)\norm{u_{n}}\geq C(n+1)^{-2N}p(n)^{-1}\norm{u_0}.
 \end{align*}
 Then for any $a>\max\{(\mathfrak{n}+1)\sigma, (\mathfrak{n}+1)\sigma+\frac{2}{\tau}\log q\}$
 \begin{align*}
 \sum_{n\geq0} q^n\norm{u_{n}}^{-a+(\mathfrak{n}+1)\sigma}\leq C_{a,\delta}\norm{u_0}^{-a+(\mathfrak{n}+1)\sigma+\delta}
 \end{align*}
 for any $\delta>0$.

\emph{Proof of the claim}. Let $\tau_1=\frac{\tau}{2}$ and  $n_0=[\frac{(N+1)}{\tau_1}\log\norm{u_0}]+1$. We have
\begin{align*}
  &\sum_{n\geq0} q^n\norm{u_{n}}^{-a+(\mathfrak{n}+1)\sigma}\\
  &=\sum_{n\geq n_0+1} q^n\norm{u_{n}}^{-a+(\mathfrak{n}+1)\sigma}+\sum_{n\leq n_0}q^n\norm{u_{n}}^{-a+(\mathfrak{n}+1)\sigma}\\
   & \stackrel{(1)}{\leq}C_a\sum_{n\geq n_0+1}q^n (e^{\tau_1 n}\norm{u_0}^{-N})^{-a+(\mathfrak{n}+1)\sigma}\\
  &+n_0q^{n_0}C_{a} (n_0+1)^{2N}p(n_0)\norm{u_0}^{-a+(\mathfrak{n}+1)\sigma}\\
  & \leq C_a\sum_{n\geq n_0+1}q^n \big(e^{\tau_1(n-n_0)}(e^{\tau_1n_0}\norm{u_0}^{-N})\big)^{-a+(\mathfrak{n}+1)\sigma}\\
  &+C_{a,\delta} \norm{u_0}^{-a+(\mathfrak{n}+1)\sigma+\delta+(N+1)\tau_1^{-1}\log q}\\
  &\stackrel{(2)}{\leq} C_a\sum_{n\geq n_0+1}q^n \big(e^{\tau_1(n-n_0)}\norm{u_0}\big)^{-a+(\mathfrak{n}+1)\sigma}\\
  &+C_{a,\delta}\norm{u_0}^{-a+(\mathfrak{n}+1)\sigma+\delta+(N+1)\tau_1^{-1}\log q}\\
  &\stackrel{(3)}{\leq} C_{a,\delta}\norm{u_0}^{-a+(\mathfrak{n}+1)\sigma+\delta+(N+1)\tau_1^{-1}\log q}
\end{align*}
for any $\delta>0$ providing $a>\max\{(\mathfrak{n}+1)\sigma, (\mathfrak{n}+1)\sigma+(N+1)\tau_1^{-1}\log q\}$.

Here in $(1)$ to estimate $\sum_{n\geq n_0+1}$ we use the exponential increasing
estimate $(*)$ and for $\sum_{n\leq n_0}$ the we use the lower bound in $(**)$. In $(2)$ we rewrite
\begin{align*}
  q^{n} (e^{\tau_1(n-n_0)}\norm{u_0})^{-a+(\mathfrak{n}+1)\sigma}=q^{n_0} \big(e^{(n-n_0)(\tau_1-\frac{\log q}{a-(\mathfrak{n}+1)\sigma})}\norm{u_0}\big)^{-a+(\mathfrak{n}+1)\sigma}
\end{align*}
and note that polynomials increase slower than any exponential functions, then $(3)$ follows immediately.
\end{proof}

\begin{proposition}\label{po:3}
Fix $\sigma=N+3+\kappa$. There exists $\delta>0$ such that for any $C^\infty$ maps $\theta$, $\psi$, $\omega$ on $\TT^N$ that are $C^\sigma$ small enough,
it is possible to split $R_{g_1}$, $R_{d_i}$, $0\leq d\leq \mathfrak{n}$ as
\begin{gather*}
R_{g_1}=\Delta_\mathcal{A}\Omega+\mathcal{R}R_{g_1},\qquad R_{d_i}=\Delta_{\overline{d_i}}\Omega+\mathcal{R}R_{d_i}
 \end{gather*}
for a $C^\infty$ map $\Omega$, so that
\begin{align*}
 \norm{\Omega}_{C^r}&\leq C_{r}\norm{R_{g_1},R_{d_0},\cdots,R_{d_\mathfrak{n}}}_{C^{r+\sigma}}
\end{align*}
and
\begin{align*}
 \norm{\mathcal{R}R_{g_1},\mathcal{R}R_{d_0},\cdots,\mathcal{R}R_{d_\mathfrak{n}}}_{C^r}&\leq C_{r}\norm{\mathcal{L}(g_1,d_0),\cdots,\mathcal{L}(g_1,d_{\mathfrak{n}})}_{C^{r+\sigma+()\sigma_1}}
 \end{align*}
for any $r\geq 0$.
\end{proposition}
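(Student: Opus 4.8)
The plan is to construct $\Omega$ and the remainder maps by running Step~1 of the proof of Proposition~\ref{po:2} and then repackaging the per-coefficient estimates obtained there. First I would solve the linearized equation for the single ergodic generator modulo its obstructions: let $\mathcal{R}R_{g_1}$ be the map with Fourier coefficients given by \eqref{for:3}, that is $(\widehat{\mathcal{R}R_{g_1}})_u=\sum_{j}\mathcal{A}^{-j}(\widehat{R_{g_1}})_{\mathcal{A}^{j}u}$ for $u\in E_\mathcal{A}$ and $0$ otherwise. By construction $R_{g_1}-\mathcal{R}R_{g_1}$ satisfies the vanishing-obstruction hypothesis of Lemma~\ref{le:5}, so that lemma provides a $C^\infty$ map $\Omega$ with $\Delta_\mathcal{A}\Omega=R_{g_1}-\mathcal{R}R_{g_1}$ and $\norm{\Omega}_{a}\le C_a\norm{R_{g_1}}_{a+\sigma_1}$; passing from the $\norm{\cdot}_a$-norms to $C^r$-norms via the comparison relations of item~\eqref{for:110} of Section~\ref{sec:1} and using $\norm{R_{g_1}}_{C^r}\le\norm{R_{g_1},R_{d_0},\cdots,R_{d_{\mathfrak n}}}_{C^r}$ gives the required bound on $\norm{\Omega}_{C^r}$. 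I then set $\mathcal{R}R_{d_i}=R_{d_i}-\Delta_{\overline{d_i}}\Omega$ for $0\le i\le\mathfrak n$, so that both splittings hold by definition, and observe, exactly as in Step~1 of the proof of Proposition~\ref{po:2}, that these remainder maps again satisfy the cocycle-difference relation \eqref{for:57} with the same functions $\mathcal{L}(g_1,d_i)$, since subtracting from each $R_g$ the coboundary $\Delta_{\bar g}\Omega$ leaves $\mathcal{L}$ unchanged.

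The core of the argument is the estimate of the remainders in terms of $\norm{\mathcal{L}}$. For $\mathcal{R}R_{g_1}$, which is supported on $E_\mathcal{A}$, the obstruction sum along the dual $\mathcal{A}$-orbit through $v\in E_\mathcal{A}$ collapses to the single term $\mathcal{A}^{-1}(\widehat{\mathcal{R}R_{g_1}})_v$, so applying Lemma~\ref{le:4} to the remainder cocycle gives $\bigl|(\widehat{\mathcal{R}R_{g_1}})_v\bigr|\le C_a\norm{\mathcal{L}}_a\abs{v}^{-a+\kappa}$, hence $\norm{\mathcal{R}R_{g_1}}_a\le C_a\norm{\mathcal{L}}_{a+\kappa}$. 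For each $\mathcal{R}R_{d_i}$ I would classify the Fourier modes $u$ by the unique integer $n$ with $\mathcal{A}^{n}u\in E_\mathcal{A}$: the ``far'' modes $n\ge 1$ or $n\le -2$ are bounded directly by \eqref{for:72}, while the ``near'' modes ($n=0$ and $n=-1$) are handled with the identities \eqref{for:76}, \eqref{for:80} and \eqref{for:81} and the auxiliary estimates \eqref{for:75} and \eqref{for:77}, using two facts: every coefficient of $R_{g_1}$ occurring on the right-hand sides is a coefficient of $\mathcal{R}R_{g_1}$, hence vanishes unless its shifted argument lies in $E_\mathcal{A}$ (by Lemma~\ref{cor:3} such a shift stays within finitely many $\mathcal{A}$-steps of $E_\mathcal{A}$), in which case it is bounded by the previous paragraph; and the coefficients of $R_{d_{i+1}}$ that appear are controlled by downward induction on $i$ starting from $R_{d_{\mathfrak n+1}}=0$ (so at $i=\mathfrak n$ one has $\mathcal{L}_{\mathfrak n}=\mathcal{L}(g_1,d_{\mathfrak n})$ and \eqref{for:71}). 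This yields $\norm{\mathcal{R}R_{d_i}}_a\le C_a\norm{\mathcal{L}}_{a+c\sigma}$ with $c$ depending only on $\mathfrak n$. Finally I would translate all these $\norm{\cdot}_a$-estimates back into $C^r$-estimates through item~\eqref{for:110}, absorb $\kappa$, the finitely many copies of $\sigma$ inherited from Proposition~\ref{po:2} and the copies of $\sigma_1$ into the fixed loss of the statement, and choose $\delta>0$ small enough that the $C^\sigma$-smallness of $R_{g_1},R_{d_0},\cdots,R_{d_{\mathfrak n}}$ secures every smallness and convergence hypothesis used (Lemma~\ref{le:3}, Lemma~\ref{le:5}, and the applicability of Lemma~\ref{cor:3}).

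The main obstacle will be the ``near $E_\mathcal{A}$'' case for the maps $\mathcal{R}R_{d_i}$: one must check that the case analysis of Lemma~\ref{cor:3} genuinely exhausts all dual orbits, keep exact track of which shifted arguments ($\mathcal{A}^{-1}\overline{d_i}v$, $\overline{d_{i+1}}\mathcal{A}\overline{d_{i+1}}v$, and the like) can actually land in $E_\mathcal{A}$ --- which is where the polynomial growth bounds of Corollary~\ref{cor:1} and the choice of $l$ at the start of Section~\ref{sec:2} enter --- and propagate the loss of regularity through the downward induction so that it stays a bounded multiple of $\sigma$. Everything else is a repackaging of estimates already established in the proof of Proposition~\ref{po:2} and in Lemma~\ref{le:4}.
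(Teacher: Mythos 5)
Your construction of $\Omega$ and $\mathcal{R}R_{g_1}$ and the estimate for $\mathcal{R}R_{g_1}$ via Lemma~\ref{le:4} match the paper. Where you diverge is in estimating $\mathcal{R}R_{d_i}$: you propose to go back to the mode-by-mode Fourier analysis of Steps~2--3 of Proposition~\ref{po:2}, classifying each $u\in\ZZ^N$ by the integer $n$ with $\mathcal{A}^nu\in E_\mathcal{A}$ and handling far modes by \eqref{for:72} and near modes by \eqref{for:76}, \eqref{for:80}, \eqref{for:81}. The paper instead takes a shorter route that you do not use: once $\mathcal{R}R_{g_1}$ is estimated, one observes that the replacements $R\mapsto\mathcal{R}R$ leave the cocycle relation \eqref{for:60} intact, so for $i=\mathfrak n$ one has $\mathcal{A}\mathcal{R}R_{d_\mathfrak n}-\mathcal{R}R_{d_\mathfrak n}\circ\mathcal{A}=\overline{d_\mathfrak n}\mathcal{R}R_{g_1}-\mathcal{R}R_{g_1}\circ\overline{d_\mathfrak n}+\mathcal{L}(g_1,d_\mathfrak n)$, and the coboundary estimate of Lemma~\ref{le:5} applied directly to this one-cohomology equation (whose right-hand side is already bounded by $\norm{\mathcal{R}R_{g_1}}$ and $\norm{\mathcal{L}}$) immediately gives $\norm{\mathcal{R}R_{d_\mathfrak n}}_{C^r}\le C_r\norm{\mathcal{L}}_{C^{r+2\sigma}}$; a downward induction on $i$ using \eqref{for:60} with $\mathcal{A}\overline{d_{i+1}}$ (ergodic by Corollary~\ref{cor:5}) in place of $\mathcal{A}$ then yields the estimates for all $\mathcal{R}R_{d_i}$ with loss $(\mathfrak n-i+2)\sigma$. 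This buys a lot: there is no need to re-derive or re-verify any of the near-mode identities, and no case analysis over dual orbits.

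Your route is more intricate and, as written, has a real gap exactly where you flag one. The identities \eqref{for:76}, \eqref{for:80}, \eqref{for:81} bound $(\widehat{R_{d_i}})_w$ at specific shifts $w=v$, $w=(\mathcal{A}\overline{d_m})^{-1}v$, $w=\mathcal{A}\overline{d_{i+1}}v$ with $v\in E_\mathcal{A}$, which are not the same as the modes parameterized by $\mathcal{A}^nu\in E_\mathcal{A}$ with $n\in\{0,-1\}$; for $n=-1$ one would have $u=\mathcal{A}v$, which is $\mathcal{A}\overline{d_{i+1}}v$ only up to the extra factor $\overline{d_{i+1}}$, and the paper has no estimate for that precise argument. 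You would need to re-derive a separate identity for $(\widehat{\mathcal{R}R_{d_i}})_{\mathcal{A}v}$ by substituting $u=\mathcal{A}v$ into \eqref{for:48} and repeating the arguments, and likewise verify that all shifted arguments of $\mathcal{R}R_{g_1}$ appearing in such identities land either outside $E_\mathcal{A}$ (hence contribute zero) or in $E_\mathcal{A}$ (hence are controlled by Lemma~\ref{le:4}). None of this is impossible, but it is additional work that the paper's coboundary argument avoids entirely. If you prefer to keep the mode-by-mode route, you should at least note that you are extending, not merely citing, the estimates of Proposition~\ref{po:2}.
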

\begin{proof}
$\mathcal{R}\theta$ and $\Omega$ are constructed in \eqref{for:3} and \eqref{for:30} respectively. The $C^r$ estimate for $\Omega$ follow immediately from \eqref{for:16} and \eqref{for:110} of Section \ref{sec:1}; and $C^r$ estimate for $\mathcal{R}R_{g_1}$ are from Lemma \ref{le:4} and \eqref{for:110} of Section \ref{sec:1} we get
\begin{align*}
 \norm{\mathcal{R}R_{g_1}}_{C^r}&\leq C_{r}\norm{\mathcal{L}(g_1,d_0),\cdots,\mathcal{L}(g_1,d_{\mathfrak{n}})}_{C^{r+\sigma}}
 \end{align*}
for any $r\geq 0$.

Let
\begin{align*}
 \mathcal{R}R_{d_i}=R_{d_i}-\Delta_{\overline{d_i}}\Omega,\qquad 0\leq i\leq\mathfrak{n}.
\end{align*}
It is easy to check that if we substitute $R_{d_i}$ by $\mathcal{R}R_{d_i}$ \eqref{for:60}  still holds.
In \eqref{for:60} letting $i=\mathfrak{n}$, we get
\begin{align*}
  &\mathcal{A}\mathcal{R}R_{d_{\mathfrak{n}}}-\mathcal{R}R_{d_\mathfrak{n}}\circ \mathcal{A}=\overline{d_\mathfrak{n}}\mathcal{R}R_{g_1}-\mathcal{R}R_{g_1}\circ \overline{d_\mathfrak{n}}+\mathcal{L}(g_1,d_\mathfrak{n}).
  \end{align*}
Then the following estimate holds
\begin{align*}
 \norm{\mathcal{R}R_{d_\mathfrak{n}}}_{C^r}&\leq C_{r}\norm{\overline{d_\mathfrak{n}}\mathcal{R}R_{g_1}-\mathcal{R}R_{g_1}\circ \overline{d_\mathfrak{n}}+\mathcal{L}(g_1,d_\mathfrak{n})}_{C^{r+\sigma}}\\
 &\leq C_{r}\norm{\mathcal{L}(g_1,d_0),\cdots,\mathcal{L}(g_1,d_{\mathfrak{n}})}_{C^{r+2\sigma}}
 \end{align*}
for any $r\geq 0$.

Now we proceed by induction. Fix $i$ between $\mathfrak{n}$ and $0$ and assume that for
all $j\geq i$
\begin{align*}
 \norm{\mathcal{R}R_{d_j}}_{C^r}&\leq C_{r}\norm{\mathcal{L}(g_1,d_0),\cdots,\mathcal{L}(g_1,d_{\mathfrak{n}})}_{C^{r+(\mathfrak{n}-j+2)\sigma}}
 \end{align*}
for any $r\geq 0$.

By using \eqref{for:60} for $i-1$ we get
\begin{align*}
 &\norm{\mathcal{R}R_{d_{i-1}}}_{C^r}\\
 &\leq C_{r}\norm{\overline{d_{i-1}}\mathcal{R}R_{g_1}\circ \overline{d_{i}}-\mathcal{R}R_{g_1}\circ \overline{d_{i-1}}+\overline{d_{i-1}}A\mathcal{R}R_{d_{i}}+\mathcal{L}(g_1,d_{i-1})}_{C^{r+\sigma}}\\
 &\leq C_{r}\norm{\mathcal{L}(g_1,d_0),\cdots,\mathcal{L}(g_1,d_{\mathfrak{n}})}_{C^{r+\sigma+(\mathfrak{n}-i+2)\sigma}}\\
 &=C_{r}\norm{\mathcal{L}(g_1,d_0),\cdots,\mathcal{L}(g_1,d_{\mathfrak{n}})}_{C^{r+(\mathfrak{n}-(i-1)+2)\sigma}}
 \end{align*}
for any $r\geq 0$.

Then we get the estimate for the case of $i-1$. Hence we obtain the estimates for all $\mathcal{R}R_{d_i}$, $0\leq i\leq\mathfrak{n}$.

\end{proof}
\section{Construction of the projection for action $\alpha$ when $\mathcal{H}$ is not nilpotent}\label{sec:4}

\begin{proposition}\label{po:4}
Fix $\sigma=N+3+\kappa$. There exists $\delta>0$ such that for any $C^\infty$ maps $\theta$, $\psi$, $\omega$ on $\TT^N$ that are $C^\sigma$ small enough,
it is possible to split $R_{A_i}$, $1\leq i\leq 4$ as $R_{A_i}=\Delta_{A_i}\Omega+\mathcal{R}R_{A_i}$,  $1\leq i\leq 4$ for a $C^\infty$ map $\Omega$, so that
\begin{align*}
 \norm{\Omega}_{C^r}&\leq C_{r}\max_{1\leq i\leq 4}\norm{R_{A_i}}_{C^{r+\sigma}},\quad \forall\,r\geq0
\end{align*}
and
\begin{align*}
 \max_{1\leq i\leq4}\norm{\mathcal{R}R_{A_i}}_{C^r}&\leq C_{r}\max_{2\leq i\leq4}\norm{\mathcal{L}(A_1,A_i)}_{C^{r+\sigma+()\sigma_1}},\quad \forall\,r\geq0.
 \end{align*}
 \end{proposition}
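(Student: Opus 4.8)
The plan is to mirror the proof of Proposition \ref{po:3} line by line, replacing the nilpotent orbit-growth input (Lemma \ref{le:9}, Proposition \ref{po:2}, Lemma \ref{le:4}) by its unipotent counterpart built from Lemma \ref{le:10} and Corollary \ref{cor:4}. First, exactly as in Step $1$ of the proof of Proposition \ref{po:2}, define $\mathcal{R}R_{A_1}=\sum_{u}(\widehat{\mathcal{R}R_{A_1}})_u e_u$ by collapsing the Fourier coefficients of $R_{A_1}$ onto $E_{A_1}$ through the $A_1$-obstruction sum, in analogy with \eqref{for:3}. Since $A_1$ is ergodic, \eqref{for:34} of Corollary \ref{cor:2} for the pair $(A_1,A_1)$ gives $\norm{\mathcal{R}R_{A_1}}_a\leq C_a\norm{R_{A_1}}_{a+\sigma_1}$, and $R_{A_1}-\mathcal{R}R_{A_1}$ satisfies the hypothesis of Lemma \ref{le:5}, so there is a $C^\infty$ map $\Omega$ with $\Delta_{A_1}\Omega=R_{A_1}-\mathcal{R}R_{A_1}$ and $\norm{\Omega}_a\leq C_a\norm{R_{A_1}}_{a+2\sigma_1}$. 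Setting $\mathcal{R}R_{A_i}=R_{A_i}-\Delta_{A_i}\Omega$ for $2\leq i\leq 4$ preserves the cocycle-difference function \eqref{for:57}, so we may from now on assume
\begin{align*}
 (\widehat{R_{A_1}})_u=0 \qquad\text{for } u\notin E_{A_1}.
\end{align*}

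The heart of the argument is the analogue of Proposition \ref{po:2} and Lemma \ref{le:4}: for every $v\in E_{A_1}$,
\begin{align*}
 \Big|\sum_j A_1^{-(j+1)}(\widehat{R_{A_1}})_{A_1^jv}\Big|\leq C_a\norm{\mathcal{L}}_a\,\abs{v}^{-a+\kappa},\qquad \norm{\mathcal{L}}_a:=\max_{2\leq i\leq 4}\norm{\mathcal{L}(A_1,A_i)}_a.
\end{align*}
Since $A_1A_i=A_iA_1$, the function \eqref{for:57} for the commuting pair $(A_1,A_i)$ collapses to $\Delta_{A_1}R_{A_i}=\Delta_{A_i}R_{A_1}+\mathcal{L}(A_1,A_i)$. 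Applying the $A_1$-obstruction functional, which annihilates the coboundary $\Delta_{A_1}R_{A_i}$ by Lemma \ref{le:8}, and commuting $A_i$ (as both a matrix and a dual map, per the convention of Section \ref{sec:1}) past the powers of $A_1$, we obtain, writing $\mathcal{O}(v):=\sum_j A_1^{-(j+1)}(\widehat{R_{A_1}})_{A_1^jv}$,
\begin{align*}
 A_i\,\mathcal{O}(v)-\mathcal{O}(A_iv)=-\sum_j A_1^{-(j+1)}(\widehat{\mathcal{L}(A_1,A_i)})_{A_1^jv}=:\epsilon_i(v),
\end{align*}
an identity that does not involve $R_{A_i}$ at all; and by \eqref{for:34} of Corollary \ref{cor:2}, using that \eqref{for:96} holds along every $A_1$-orbit through $E_{A_1}$, $\norm{\epsilon_i(v)}\leq C_a\norm{\mathcal{L}}_a\abs{v}^{-a+\kappa}$ for $v\in E_{A_1}$. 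Because $\bigcap_{i=2}^4 p_1(A_i)=\{0\}$, every nonzero $v$ is displaced by some $A_i$, so one iterates the identity $\mathcal{O}(v)=A_i^{-1}\mathcal{O}(A_iv)+A_i^{-1}\epsilon_i(v)$ in the $A_i$-direction, minimalizing with respect to $A_1$ at each step as the sequences $v_{n,\pm}$ are produced in Lemma \ref{le:4}: put $v_0=v$ and $v_m=\mathcal{M}_{A_1}(A_iv_{m-1})$, so that each $v_m\in E_{A_1}$ lies on the $A_1$-orbit of $A_i^mv$. Since $A_i$ is a fixed square-zero unipotent matrix preserving the $A_1$-decomposition, \eqref{for:17} of Lemma \ref{le:10} for $(A_1,A_i)$ bounds the whole $A_1$-orbit of $A_i^mv$ below by $Cm^{1/2}\norm{v}^{-n_1}$, so $\abs{v_m}\to\infty$, and the telescoped identity expresses $\mathcal{O}(v)$ as $A_1^{b_m}A_i^{-m}\mathcal{O}(v_m)$ plus the series $\sum_{l\geq1}A_1^{b_l}A_i^{-l}\epsilon_i(v_{l-1})$, where the powers $b_l$ grow at most logarithmically (each $A_i$ is a bounded matrix), so these prefactors are polynomial in $l$. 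The first term tends to $0$ as $m\to\infty$ ($R_{A_1}$ is $C^\infty$ and supported on $E_{A_1}$), and the series is summed by Corollary \ref{cor:4}: the iterates $v_l$ stay close to minimal for $A_1$, so the auxiliary hypothesis needed in \eqref{for:25} of Corollary \ref{cor:4} holds along the index set, giving the bound $C_a\norm{\mathcal{L}}_a\abs{v}^{-a+\kappa}$. As in the passage from \eqref{for:100} to \eqref{for:101} in the proof of Lemma \ref{le:4}, one must first choose, for each $v$, the index $i$ and a sign so that the iterated orbit grows at least like $p(m)^{-1}\norm{v}$ rather than merely $\norm{v}^{-n_1}$; this uses that $\mathcal{M}_{A_1}(v)$ has a substantial component in $V_3(A_1)$ which is genuinely displaced by some $A_i$ — again available from $\bigcap_{i=2}^4 p_1(A_i)=\{0\}$ together with $(A_i-I)^2=0$.

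With this obstruction estimate in hand the rest is routine and follows Proposition \ref{po:3}: the $C^r$ bound for $\Omega$ is the Sobolev/$C^r$ comparison \eqref{for:110} of Section \ref{sec:1} applied to the estimate just above it; the $C^r$ bound for $\mathcal{R}R_{A_1}$ is the obstruction estimate together with the same comparison; and the $C^r$ bounds for $\mathcal{R}R_{A_i}$, $2\leq i\leq 4$, follow by solving $\Delta_{A_1}\mathcal{R}R_{A_i}=\Delta_{A_i}\mathcal{R}R_{A_1}+\mathcal{L}(A_1,A_i)$ for $\mathcal{R}R_{A_i}$ with a Lemma \ref{le:5}-type tame estimate, since $A_1$ is ergodic. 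Choosing $\sigma$, $\sigma_1$ as in Proposition \ref{po:3}, all estimates assemble into the asserted bounds. The main obstacle is the obstruction estimate of the middle paragraph: in the nilpotent case the auxiliary orbits grew exponentially (via Lemma \ref{le:1}), so the quadratically small errors summed trivially, whereas here the unipotent generators grow only polynomially, so convergence of the error series is delicate and must be driven simultaneously by the polynomial lower bound of Lemma \ref{le:10}, the near-minimality of the iterates (Corollary \ref{cor:4}), and a $v$-dependent choice of generator $A_i$ and direction — precisely the step where the hypotheses $A_1A_i=A_iA_1$, $(A_i-I)^2=0$ and $\bigcap_{i=2}^4 p_1(A_i)=\{0\}$ are used.
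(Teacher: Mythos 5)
Your overall skeleton is the right one and matches the paper's: collapse $R_{A_1}$ to $E_{A_1}$, solve the $A_1$-coboundary for $\Omega$ via Lemma \ref{le:5}, derive the intertwining relation between the $A_1$-obstruction at $v$ and at $A_iv$ from \eqref{for:36}, and then bound the obstruction by iterating in the $A_i$-direction. You have also correctly isolated the hard point: the orbit growth from Lemma \ref{le:10} only gives $\norm{A_1^{k_1}A_{i_0}^{k_2}v}\gtrsim \rho^{\abs{k_1}}\abs{k_2}^{1/2}\norm{v}^{-n_1}$, which is useless for tameness, and one needs instead a lower bound of the form $(\abs{k_1}+1)^{-N}\norm{v}$ on a suitable half-space of $(k_1,k_2)$.

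There are, however, two concrete issues with the write-up. First, you assert but do not prove the crucial half-space estimate. The paper establishes \eqref{for:33}, namely $\norm{A_1^{k_1}A_{i_0}^{k_2}v}\geq C(\abs{k_1}+1)^{-N}\norm{v}$ on $K^+$ or on $K^-$ for $v\in E_{A_1}$, by a genuine case analysis: pick $i_0$ with $\norm{P_{i_0}(v)}\geq C\norm{v}$ (possible because $\bigcap_{i}p_1(A_i)=\{0\}$), split $P_{i_0}(v)=u_1+u_2$ with $A_{i_0}u_1=u_1$ and $u_2$ moved, and because $(A_{i_0}-I)^2=0$ the Jordan blocks of $A_{i_0}$ on each Lyapunov space of $A_1$ are $1\times1$ or $2\times 2$; for a $2\times 2$ block with $v''=(a_1,a_2)$ one must choose the sign of $k_2$ according to the sign of $a_1a_2$ so that $\norm{A_{i_0}^{k_2}v''}\geq\norm{v''}$. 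This sign choice is exactly where the square-zero hypothesis is used in an essential way, and it is the part you wave at with ``substantial component in $V_3(A_1)$ which is genuinely displaced by some $A_i$'' without carrying it out.

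Second, your iteration scheme departs from the paper's and adds avoidable subtleties. You minimalize at each step à la Lemma \ref{le:4}, producing $v_m=\mathcal{M}_{A_1}(A_i^m v)$ and prefactors $A_1^{b_l}A_i^{-l}$, and you claim $b_l$ is logarithmic. This is true (because $\norm{A_i^m}$ is polynomial, the minimalizing shift over the whole word $A_i^m$ is $O(\log m)$, not $O(m)$ as naive step-by-step bookkeeping would suggest), but then you still need the Corollary \ref{cor:4}-type hypothesis $p(\abs{k_1})\norm{A_1^{k_1}A_i^{k_2}v}\geq\norm{v}$ to hold along the iterated orbit, which for the polynomially growing unipotent direction brings you right back to proving \eqref{for:33}. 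The paper's organization is cleaner: it never minimalizes in the $A_i$-direction; it writes the obstruction directly as a half-space sum $S_{K^\pm}(\mathcal{L}_i,v)(A_1,A_{i_0})$, verifies the hypothesis of \eqref{for:25} of Corollary \ref{cor:4} via \eqref{for:33}, and reads off the tame bound. There the exponential prefactor $A_1^{-(k_1+1)}$ is directly paired with the exponential factor $\rho^{\abs{k_1}}$ inside Lemma \ref{le:10}, with no need to track re-minimalization shifts. Your version can likely be made to work, but as written it substitutes a harder bookkeeping scheme for the step that actually needs proof.
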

\begin{proof}
Similar to \eqref{for:3} we define $\mathcal{R}R_{A_1}=\sum_{v\in\ZZ^N}(\widehat{\mathcal{R}R_{A_1}})_ve_v$ where
\begin{align*}
(\widehat{\mathcal{R}R_{A_1}})_v&=\left\{\begin{aligned} &\sum_{i\in\ZZ}A^{-i}(\widehat{\mathcal{R}R_{A_1}})_{A^iv},\qquad &v=\mathcal{M}_{A_1}(v),\\
&0, \qquad &\text{otherwise}
\end{aligned}
 \right.
\end{align*}
for $v\neq 0$ and $(\widehat{\mathcal{R}R_{A_1}})_0=0$.

Since $R_{A_1}-\mathcal{R}R_{A_1}$ satisfies the solvable condition in Lemma \ref{le:5}, by using Lemma \ref{le:5} there is a $C^\infty$ function $\Omega$ such that
\begin{align*}
 \Delta_A\Omega=R_{A_1}-\mathcal{R}R_{A_1}
\end{align*}
with estimates
\begin{align*}
 \norm{\Omega}_{a}\leq C_a\norm{R_{A_1}-\mathcal{R}R_{A_1}}_{a+\sigma_1}\leq C_a\norm{R_{A_1}}_{a+2\sigma_1}, \quad\forall a\geq 0.
\end{align*}
where $\sigma_1>\kappa_{A,A}$.

Since
\begin{align}\label{for:36}
  A_1R_{A_i}-R_{A_i}\circ A_1=A_iR_{A_1}-R_{A_1}\circ A_i+\mathcal{L}_i
  \end{align}
 for $2\leq i\leq n$, the obstructions for each side with respect to $A_1$ vanish; therefore for any $v=\mathcal{M}_{A_1}(v)$ we have
 \begin{align*}
  \sum_{j\in\ZZ}A_1^{-(j+1)}A_i(\widehat{\mathcal{R}R_{A_1}})_{A_1^jv}-\sum_{j\in\ZZ}A_1^{-(j+1)}(\widehat{\mathcal{R}R_{A_1}})_{A_1^jA_iv}
  =-\sum_{j\in\ZZ}A_1^{-(j+1)}(\widehat{\mathcal{L}_i})_{A_1^jv}.
 \end{align*}
The absolutely convergence of all sums evolved is guaranteed by
\eqref{for:26} of Corollary \ref{cor:2}.

Recall nations above Theorem \ref{th:2}. Since $\bigcap_{i=2}^np_1(A_i)=\{0\}$, $\bigoplus_{i=2}^np_1(A_i)^{\bot}=\RR^N$. Let $P_i$, $2\leq i\leq n$ be the projection to $p_1(A_i)^{\bot}$. For any $v\in E_{A_1}$, there exists  $2\leq i_0\leq n$ such that $\norm{P_{i_0}(v)}\geq C\norm{v}$.

Iterating this equation with respect to $A_{i_0}$ we obtain
 \begin{align*}
  &\sum_{j\in\ZZ}A_1^{-j}(\widehat{\mathcal{R}R_{A_1}})_{A_1^jv}
  -\sum_{l\rightarrow\infty}\sum_{j\in\ZZ}A_1^{-j}A_{i_0}^{-l}(\widehat{\mathcal{R}R_{A_1}})_{A_1^jA_{i_0}^{l}v}\\
  &=-\sum_{l=0}^\infty\sum_{j\in\ZZ}A_{i_0}^{-(l+1)}A_1^{-(j+1)}(\widehat{\mathcal{L}_i})_{A_1^jA_{i_0}^lv}.
 \end{align*}
 Since $A_{i_0}v\neq v$, it follows from \eqref{for:32} of Corollary \ref{cor:4} that all the involving sums are convergent absolutely, which also implies that the limit above is $0$. By iterating backwards and applying the
same reasoning, in the notation of Corollary \ref{cor:4} we obtain
\begin{align}
  \sum_{j\in\ZZ}A_1^{-j}(\widehat{\mathcal{R}R_{A_1}})_{A_1^ju}&=-S_{K^+}(\mathcal{L}_i,u)(A_1,A_{i_0})=S_{K^-}(\mathcal{L}_i,u)(A_1,A_{i_0}),
 \end{align}
where $K^+=\{{k_1,k_2}\in\ZZ^2:k_2\geq 0\}$ and $K^-=\{{k_1,k_2}\in\ZZ^2:k_2<0\}$.

 Then according to \eqref{for:25} of Corollary \ref{cor:4}, the needed estimate for $\mathcal{R}R_{A_1}$ with respect to $\mathcal{L}$ follows if in at least on one of the half-spaces $K^+$ and $K^-$ the
dual action satisfies some polynomial lower bound for every $u\in E_{A_1}$ (see \eqref{for:105} of Section \ref{sec:1}).

For $A_1$ we have the following eigenspace decomposition:
\begin{align*}
  \RR^N=\bigoplus_{i\in I} J_{\mu_i}
\end{align*}
where $J_{\mu_i}$ is the eigenspace  of $A_1$ with eigenvalue $\mu_i$. We denote by $q_i$ the projection to each $J_{\mu_i}$.

Since $A_1$ and $A_{i_0}$ commute, $A_{i_0}$ preserves each $J_{\mu_i}$.  We can choose a basis of $\RR^N$ such that $A_{i_0}$ has its Jordan normal form on each $J_{\mu_i}$. Then for any $u\in\RR^N$ we have a decomposition $u=u_1+u_2$ determined by this basis, where $A_{i_0}u_1=u_1$, $(A_{i_0}-I)u_2\neq0$ and
\begin{align*}
\norm{A_{i_0}^kq_i(u_2)}\geq C\norm{q_i(u_2)}, \qquad \forall k\in\ZZ,\,\forall i\in I
\end{align*}
Note that for any $u\in p_1(A_{i_0})^{\bot}$, we have
\begin{align*}
  \norm{u_2}\geq C\norm{u}.
\end{align*}
Especially, we have
\begin{align*}
 \norm{(P_{i_0}(v))_2}\geq C\norm{P_{i_0}(v)}\geq C\norm{v}.
\end{align*}
In case
$(P_{i_0}(v))_2\hookrightarrow 1,2(A_1)$, suppose $(P_{i_0}(v))_2$ has the largest projections to some Lyapunov space $J_{\mu_j}$ of $A_1$ with non-negative Lyapunov exponent. Then
\begin{align*}
 \norm{q_j(P_{i_0}(v))_2}\geq C\norm{(P_{i_0}(v))_2}\geq C\norm{v},
\end{align*}
and thus for any $k_1\geq0$
\begin{align*}
  \norm{A_1^{k_1}A_{i_0}^{k_2}v}&\geq C\norm{A_1^{k_1}A_3^{k_2}v_2}=C\norm{A_1^{k_1}A_3^{k_2}(P_{i_0}(v))_2}\geq C\norm{A_1^{k_1}A_3^{k_2}q_j(P_{i_0}(v))_2}\\
  &\geq C(k_1+1)^{-N}\norm{A_3^{k_2}q_j(P_{i_0}(v))_2}\geq C(k_1+1)^{-N}\norm{q_j(P_{i_0}(v))_2}\\
  &\geq C(k_1+1)^{-N}\norm{v}.
\end{align*}
Since $v=\mathcal{M}_{A_1}(v)$, letting $v'$ be the largest projection of $v$ to some Lyapunov space $J_{\mu_i}$ of $A_1$ with negative Lyapunov exponent $\mu_i$, we have
\begin{align*}
 \norm{v'}\geq C\norm{v}.
\end{align*}
Let $J$ be the Jorden block of $A_{i_0}$ on $J_{\mu_i}$ on which $v'$ has the largest projection $v''$. Then
\begin{align*}
  \norm{v''}\geq C\norm{v}.
\end{align*}
Since $(A_{i_0}-I)^2=0$, then $J$ is either a $1\times1$ or a $2\times2$ matrix. In case $J$ is a $1\times 1$ matrix, then for any $k_1<0$, $k_2\in \ZZ$ we have
\begin{align*}
  \norm{A_1^{k_1}A_{i_0}^{k_2}v}&\geq C\norm{A_1^{k_1}A_{i_0}^{k_2}v'}\geq Ce^{k_1\mu_i/2}\norm{A_{i_0}^{k_2}v'}\geq Ce^{k_1\mu_i/2}\norm{A_{i_0}^{k_2}v''}\\
  &=Ce^{k_1\mu_i/2}\norm{v''}\geq Ce^{k_1\mu_i/2}\norm{v}.
\end{align*}
In case $J$ is a $2\times 2$ matrix, denote the coordinates of $v''$ by $(a_1,a_2)$. If $a_1a_2\geq 0$ then for any $k_1<0$, $k_2\geq 0$ we have
\begin{align*}
  \norm{A_1^{k_1}A_{i_0}^{k_2}v}&\geq Ce^{k_1\mu_i/2}\norm{A_{i_0}^{k_2}v''}=Ce^{k_1\mu_i/2}\norm{(a_1+k_2a_2,a_2)}\geq Ce^{k_1\mu_i/2}\norm{v''}\\
  &\geq Ce^{k_1\mu_i/2}\norm{v}.
\end{align*}
If $a_1a_2<0$ then same estimate holds for any $k_1<0$, $k_2>0$. Hence
\begin{align}\label{for:33}
  \norm{A_1^{k_1}A_{i_0}^{k_2}v}\geq C(\abs{k_1}+1)^{-N}\norm{v}
\end{align}
holds either on $K^+$ or on $K^-$.

In case
$(P_{i_0}(v))_2\hookrightarrow 3(A_1)$, applying the same reasoning, we see that \eqref{for:33} still holds either on $K^+$ or on $K^-$. Now choose the half-space $K^+$ or $K^-$ in which the estimates \eqref{for:33} holds, i.e., choose
one of the sums $S_{K^+}(\varphi,v)(A_1,A_{i_0})$ or $S_{K^-}(\varphi,v)(A_1,A_{i_0})$. Then the assumption of \eqref{for:25} of Corollary \ref{cor:4} is satisfied for one of the sums above  and therefore the estimate
for $\mathcal{R}R_{A_1}$ follows:
\begin{align*}
 \norm{\mathcal{R}R_{A_1}}_{a}&\leq C_{a,\delta}\max_{2\leq i\leq n}\norm{\mathcal{L}_i}_{a+\kappa+\delta},\quad \forall\,a\geq0,\,\delta>0
 \end{align*}
where $\kappa=(n_1+1)\abs{\log_\rho y}+4(n_1+1)(1+N)$; here $n_1=2N^2+3N$, $y=\max\{\norm{A_1},\norm{A_1^{-1}}\}$.

 Let $\mathcal{R}R_{A_i}=R_{A_i}-\Delta_{A_i}\Omega$, $2\leq i\leq n$. It is easy to check that if substituting $R_{A_i}$ by $\mathcal{R}R_{A_i}$ equation \eqref{for:36}
 are also satisfied. Then it follows from Lemma \ref{le:5} that
 \begin{align*}
  \norm{\mathcal{R}R_{A_i}}_a&\leq C_a\norm{A_iR_{A_1}-R_{A_1}\circ A_i+\mathcal{L}_i}_{a+\kappa_{A_1,A_1}}\\
  &\leq C_{a,\delta}\norm{\mathcal{L}_i}_{a+\kappa_{A_1,A_1}+\kappa+\delta}.
 \end{align*}

\end{proof}
\section{error}
The following lemma shows that $\mathcal{L}$ cannot be large if $\alpha+R$ is a nilpotent group action. It is in fact quadratically small with respect to $R$.
\begin{lemma}
If $\widetilde{\alpha}=\alpha+R$ is a $C^\infty$ nipotent group action on $\TT^N$ then for any $r\geq 0$
\begin{align*}
 \norm{\mathcal{L}(g_1,d_i)}_{C^r}&\leq C_r\norm{R_{g_1},R_{d_0},\cdots,R_{d_\mathfrak{n}}}_{C^{r}}\norm{R_{g_1},R_{d_0},\cdots,R_{d_\mathfrak{n}}}_{C^{r+1}}\\
 &+C_r\norm{R_{g_1},R_{d_0},\cdots,R_{d_\mathfrak{n}}}_{C^{r+1}}\norm{R_{g_1},R_{d_0},\cdots,R_{d_\mathfrak{n}}}_{C^{r+2}}^2.
\end{align*}
\end{lemma}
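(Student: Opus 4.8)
The plan is to unwind the definition $\mathcal{L}(x,y)=R_x\circ\bar y+\bar x R_y-R_y\circ\overline{xz}-\bar y R_x\circ\bar z-\overline{yx}R_z$ (with $z=x^{-1}y^{-1}xy$) and show that it is exactly the obstruction measuring how far $R$ is from being a twisted cocycle over $\alpha$, given that it genuinely \emph{is} a twisted cocycle over $\widetilde\alpha=\alpha+R$. The starting point is the cocycle identity that $R$ must satisfy because $\widetilde\alpha$ is a group action: for all $g,h\in\mathcal H$,
\begin{align*}
 R_{gh}=R_g\circ\widetilde\alpha(h)+\bar g\,R_h=R_g\circ(\bar h+R_h)+\bar g\,R_h,
\end{align*}
and its inverse-variant $R_{g^{-1}}=-\bar{g}^{-1}R_g\circ\bar g^{-1}+(\text{terms involving }R\circ R)$. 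Substituting $x,y$ and the commutator word $z=x^{-1}y^{-1}xy$ into the identity $R_{xy}=R_{yx\,z}$ (which holds because $xy=yxz$ in $\mathcal H$) and expanding both sides with the cocycle-over-$\widetilde\alpha$ formula, the ``linear in $R$'' terms collapse to precisely $\mathcal{L}(x,y)=0$, and everything that survives is at least quadratic in $R$ — a sum of compositions of the form $R_a\circ R_b$, $\bar a\,R_b\circ R_c$, etc., composed with fixed linear toral automorphisms $\bar a$ (and their inverses) drawn from a finite list determined by $\alpha$ and the generators $g_1,d_0,\dots,d_{\mathfrak n}$.

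The second step is to estimate these quadratic-and-higher error terms in $C^r$. Here I would use the standard tame composition and product estimates for $C^r$ maps on $\TT^N$: for $C^1$-small maps $F,G$ one has $\|F\circ(\mathrm{Id}+G)\|_{C^r}\le C_r(\|F\|_{C^r}+\|F\|_{C^1}\|G\|_{C^r})$ and $\|F\circ G'\|_{C^r}\le C_r\|F\|_{C^r}$ for a \emph{fixed} linear $G'\in GL(N,\ZZ)$ (the constant absorbing $\|G'\|^r$, which depends only on $\alpha$ and hence is a $C_r$). A term like $R_a\circ\overline b$ composed against another $R$-factor then contributes $\|R\|_{C^r}\|R\|_{C^{r+1}}$-type bounds (the $+1$ coming from differentiating the inner map once when chain-ruling $R_a\circ(\mathrm{Id}+\cdots)$), while the cubic terms — those arising from iterating the $R\circ R$ correction through one more commutator layer, or from the $R\circ R\circ R$ pieces in expanding $R_z$ for $z$ a length-$\ge 2$ commutator — pick up one more factor and one more derivative loss, giving $\|R\|_{C^{r+1}}\|R\|_{C^{r+2}}^2$. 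Writing $\|R\|_{C^s}$ here as shorthand for $\|R_{g_1},R_{d_0},\dots,R_{d_{\mathfrak n}}\|_{C^s}$ and using that all the $\bar a$ appearing are among finitely many fixed matrices, one collects the terms and reads off exactly the claimed inequality.

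The main obstacle is purely bookkeeping: correctly carrying out the algebraic expansion of $R_{xy}$ versus $R_{yxz}$ so that \emph{all} linear-in-$R$ contributions cancel and leave precisely $\mathcal{L}(x,y)$ plus controlled higher-order remainder — in particular keeping track of which automorphism each $R$-factor is pre-/post-composed with, since $z$ itself is built from $x,y$ and for the nilpotent case one must iterate through the chain $d_{j}=D_j(g_1,d_0)$, so the word $z$ associated to the pair $(g_1,d_i)$ can be long and its $R_z$ must itself be re-expanded via the cocycle identity, generating the cubic terms. A clean way to organize this is to prove an abstract lemma: if $c:\mathcal H\to\{\text{maps}\}$ satisfies $c_{gh}=c_g\circ\bar h+\bar g c_h+\mathcal Q_{g,h}$ with $\mathcal Q$ quadratically small, then for any two words representing the same group element the two expansions differ by something quadratically small with a fixed loss of regularity; apply it to the two words $xy$ and $yxz$. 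Once that is in place, the $C^r$ estimates are routine applications of the interpolation/tame-composition inequalities recorded in Section~\ref{sec:1} (item \eqref{for:110}) together with the elementary composition bounds, and the stated inequality follows.
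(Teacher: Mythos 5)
Your overall strategy — expand the group-law identity for $\widetilde\alpha$, observe that its linear-in-$R$ part is exactly $\mathcal{L}(g_1,d_i)=0$, and bound the nonlinear remainder (which therefore equals $\mathcal{L}(g_1,d_i)$) by tame composition estimates — is the correct one and is the paper's. But your account of the combinatorics is off in a way that would send the details astray. For the pairs $(g_1,d_i)$ that actually occur, the commutator $z=g_1^{-1}d_i^{-1}g_1d_i$ is \emph{by construction} one of the designated generators, $z=d_{i+1}=D_{i+1}(g_1,d_0)$. So $R_z=R_{d_{i+1}}$ is one of the primary data measured by $\norm{R_{g_1},R_{d_0},\dots,R_{d_\mathfrak{n}}}$: there is no ``long commutator word,'' no re-expansion of $R_z$ via the cocycle identity, and no need for the abstract two-words lemma you propose. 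The only identity needed is
\begin{align*}
(\mathcal{A}+R_{g_1})\circ(\overline{d_i}+R_{d_i})=(\overline{d_i}+R_{d_i})\circ(\mathcal{A}+R_{g_1})\circ(\overline{d_{i+1}}+R_{d_{i+1}}),
\end{align*}
which, after the cancellation $\mathcal{A}\overline{d_i}=\overline{d_i}\mathcal{A}\overline{d_{i+1}}$, expresses $\mathcal{L}(g_1,d_i)$ as a sum of three differences $R_a\circ\phi_1-R_a\circ\phi_0$ with $\phi_1-\phi_0$ of size $R$. (For $i=\mathfrak{n}$ the commutator is trivial, $d_{\mathfrak{n}+1}=e$, and one appeals directly to the abelian Lemma~4.7 of \cite{Damjanovic4}.)

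This also changes your diagnosis of the cubic term: it does not come from expanding $R_z$ for a length-$\geq 2$ commutator, but from the single nested composition
\begin{align*}
R_{d_i}\circ\bigl(\mathcal{A}\overline{d_{i+1}}+\mathcal{A}R_{d_{i+1}}+R_{g_1}\circ(\overline{d_{i+1}}+R_{d_{i+1}})\bigr)-R_{d_i}\circ\mathcal{A}\overline{d_{i+1}},
\end{align*}
whose inner perturbation itself contains $R_{g_1}\circ(\overline{d_{i+1}}+R_{d_{i+1}})$. The $C^r$ estimate for the outer difference loses one derivative on $R_{d_i}$, and the $C^{r+1}$ bound on the inner composite loses another, producing the $\norm{R}_{C^{r+1}}\norm{R}_{C^{r+2}}^2$ term. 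Once this picture is corrected, the rest of your outline — the tame composition bounds and absorbing the finitely many fixed automorphisms $\overline{d_j}$, $\mathcal{A}$ into $C_r$ — is sound and essentially what the paper does.
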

\begin{proof}
Since $\mathcal{A}\overline{d_{\mathfrak{n}}}=\overline{d_{\mathfrak{n}}}\mathcal{A}$, it follows from Lemma 4.7 in \cite{Damjanovic4} that
\begin{align*}
 \norm{\mathcal{L}(g_1,d_i)}_{C^r}\leq C_r\norm{R_{g_1},R_{d_\mathfrak{n}}}_{C^r}\norm{R_{g_1},R_{d_\mathfrak{n}}}_{C^{r+1}}.
\end{align*}
Note that $\mathcal{A}\overline{d_i}=\overline{d_i}\mathcal{A}\overline{d_{i+1}}$, then
\begin{align*}
  \widetilde{\alpha}_\mathcal{A}\circ \widetilde{\alpha}_{\overline{d_i}}&=\widetilde{\alpha}_{\overline{d_i}}\circ \widetilde{\alpha}_{\mathcal{A}}\circ \widetilde{\alpha}_{\overline{d_{i+1}}}\\
  (\mathcal{A}+R_{g_1})\circ (\overline{d_i}+R_{d_i})&=(\overline{d_i}+R_{d_i})\circ (\mathcal{A}+R_{g_1})\circ (\overline{d_{i+1}}+R_{d_{i+1}}).
  \end{align*}
Then
\begin{align*}
 \mathcal{A}R_{d_i}+R_{g_1}\circ (\overline{d_i}+R_{d_i})&=R_{d_i}\circ \big(\mathcal{A}\overline{d_{i+1}}+\mathcal{A}R_{d_{i+1}}+R_{g_1}\circ (\overline{d_{i+1}}+R_{d_{i+1}})\big)\\
 &+\overline{d_i}\mathcal{A}R_{d_{i+1}}+\overline{d_i}R_{g_1}\circ (\overline{d_{i+1}}+R_{d_{i+1}}).
\end{align*}
Therefore,
\begin{align*}
  &\mathcal{L}(g_1,d_i)\\
  &=\mathcal{A}R_{d_i}-R_{d_i}\circ \mathcal{A}\overline{d_{i+1}}-\overline{d_i}R_{g_1}\circ \overline{d_{i+1}}
  +R_{g_1}\circ \overline{d_i}-\overline{d_i}AR_{d_{i+1}}\\
  &=R_{d_i}\circ \big(\mathcal{A}\overline{d_{i+1}}+\mathcal{A}R_{d_{i+1}}+R_{g_1}\circ (\overline{d_{i+1}}+R_{d_{i+1}})\big)-R_{d_i}\circ \mathcal{A}\overline{d_{i+1}}\\
  &+\overline{d_i}R_{g_1}\circ (\overline{d_{i+1}}+R_{d_{i+1}})-\overline{d_i}R_{g_1}\circ \overline{d_{i+1}}\\
  &+R_{g_1}\circ \overline{d_i}-R_{g_1}\circ (\overline{d_i}+R_{d_i}).
\end{align*}
The estimate for $C^r$ norms follows similarly (see for example [\cite{LAZUTKIN}, Appendix II]):
\begin{align*}
 &\norm{\mathcal{L}(g_1,d_i)}_{C^r}\\
 &\leq\big\|R_{d_i}\circ \big(\mathcal{A}\overline{d_{i+1}}+\mathcal{A}R_{d_{i+1}}+R_{g_1}\circ (\overline{d_{i+1}}+R_{d_{i+1}})\big)-R_{d_i}\circ \mathcal{A}\overline{d_{i+1}}\big\|_{C^r}\\
  &+\big\|\overline{d_i}R_{g_1}\circ (\overline{d_{i+1}}+R_{d_{i+1}})-\overline{d_i}R_{g_1}\circ \overline{d_{i+1}}\big\|_{C^r}\\
  &+\big\|R_{g_1}\circ \overline{d_i}-R_{g_1}\circ (\overline{d_i}+R_{d_i})\big\|_{C^r}\\
  &\leq C_r\norm{R_{d_i},\mathcal{A}R_{d_{i+1}}+R_{g_1}\circ (\overline{d_{i+1}}+R_{d_{i+1}})}_{C^r}\\
  &\cdot\norm{R_{d_i},\mathcal{A}R_{d_{i+1}}+R_{g_1}\circ (\overline{d_{i+1}}+R_{d_{i+1}})}_{C^{r+1}}\\
  &+C_r\big\|R_{g_1},R_{d_{i+1}}\big\|_{C^r}\big\|R_{g_1},R_{d_{i+1}}\big\|_{C^{r+1}}\\
  &+C_r\big\|R_{g_1},R_{d_{i}}\big\|_{C^r}\big\|R_{g_1},R_{d_{i}}\big\|_{C^{r+1}}.
\end{align*}
Combined with
\begin{align*}
  &\big\|R_{g_1}\circ (\overline{d_{i+1}}+R_{d_{i+1}})-R_{g_1}\circ (\overline{d_{i+1}})\big\|_{C^r}\\
  &\leq C_r\big\|R_{g_1},R_{d_{i+1}}\big\|_{C^r}\big\|R_{g_1},R_{d_{i+1}}\big\|_{C^{r+1}},
\end{align*}
we get the conclusion immediately.
\end{proof}
\section{Proof of }

\end{document}